\newcommand\Fp{{\mathbb F_p}}
\newcommand\Ft{{\mathbb F_2}}
\newcommand\LL{{\mathscr L}}
\newcommand\LLgs{{\LL_{\text{GS}}}}
\newcommand\LLg{{\LL_{\text{G}}}}
\newcommand\LLtg{{{}_2\LLg}}
\renewcommand\AA{{\mathscr A}}
\newcommand\II{{\mathscr I}}
\newcommand\KK{{\mathscr K}}
\newcommand\W{{\mathscr W}}
\newcommand\Der{{\operatorname{\mathfrak{Der}}}}
\newcommand\U{{\mathbb U}}
\newcommand\Hom{{\operatorname{Hom}}}
\newcommand\Sym{{\operatorname{Sym}}}
\newcommand\Mat{{\operatorname{Mat}}}
\newcommand\Hdim{{\operatorname{Hdim}}}
\newcommand\GKdim{{\operatorname{GKdim}}}
\begin{document}
\title{Self-similar Lie algebras}
\author{Laurent Bartholdi}
\address{Laurent Bartholdi, Mathematisches Institut, Georg-August Universit\"at zu G\"ottingen}
\email{laurent.bartholdi@gmail.com}
\date{March 1, 2010}
\keywords{Groups acting on trees. Lie algebras. Wreath products.}
\subjclass[2000]{\textbf{20E08} (Groups acting on trees),
  \textbf{16S34} (Group rings),
  \textbf{17B65} (Infinite-dimensional Lie (super)algebras),
  \textbf{20F40} (Group rings),
  \textbf{17B50} (Modular Lie algebras)}
\begin{abstract}
  We give a general definition of self-similar Lie algebras, and show
  that important examples of Lie algebras fall into that class. We
  give sufficient conditions for a self-similar Lie algebra to be nil,
  and prove in this manner that the self-similar algebras associated
  with Grigorchuk's and Gupta-Sidki's torsion groups are nil as well
  as self-similar. We derive the same results for a class of examples
  constructed by Petrogradsky, Shestakov and Zelmanov.
\end{abstract}
\maketitle

\section{Introduction}
Since its origins, mankind has been divided into hunters and
gatherers. This paper is resolutely of the latter kind, and brings
together Caranti \emph{et al.}'s Lie algebras of maximal
class~\cites{caranti-m-n:glamc1,caranti-n:glamc2,caranti-vl:glamc4,jurman:glamc3,caranti-vl:glamc5},
the self-similar lie algebras associated with self-similar groups
from~\cite{bartholdi:lcs}, the self-similar associative algebras
from~\cite{bartholdi:branchalgebras}, and Petrogradsky \emph{et al.}'s
nil lie
algebras~\cites{petrogradsky:sila,petrogradsky-s:sila,
  shestakov-z:nillie}. Contrary to tradition~\cite{?:bible}*{Gen~4.8},
we do not proclaim superiority of gatherers; yet we reprove, in what
seems a more natural language, the main results of these last
papers. In particular, we extend their criteria for growth
(Propositions~\ref{prop:GKupper} and~\ref{prop:GKlower}) and nillity
(Corollary~\ref{cor:nil}).

The fundamental notion is that of a \emph{self-similar algebra},
see~Definition~\ref{defn:ssla}. If $\LL$ be a Lie algebra and $X$ be a
commutative algebra with Lie algebra of derivations $\Der X$, their
\emph{wreath product} $\LL\wr\Der X$ is the Lie algebra $\LL\otimes
X\rtimes\Der X$. A \emph{self-similar Lie algebra} is then a Lie
algebra endowed with a map $\LL\to\LL\rtimes\Der X$.

We give in~\S\S\ref{ss:ssla}--\ref{ss:wbr} a sufficient condition on a
self-similar Lie algebra $\LL$ to be nil, namely for $\ad(x)$ to be a
nilpotent endomorphism of $\LL$ for all $x\in\LL$. Our condition is
then applied in~\S\ref{ss:examples} to all the known examples, and
provides uniform proofs of their nillity.

In particular, we show that the Lie algebras
in~\cites{petrogradsky:sila,shestakov-z:nillie} are self-similar, a
fact hinted at, but never explicitly stated or used in these papers.

Golod constructed in~\cites{golod:nil,golod-s:orig} infinite
dimensional, finitely generated nil associative algebras. These
algebras have exponential growth --- indeed, this is how they are
proved to be infinite dimensional --- and quite intractable. In
searching for more examples, Small asked whether there existed such
examples with finite Gelfand-Kirillov dimension (i.e., roughly
speaking, polynomial growth). That question was answered positively by
Lenagan and Smoktunowicz in~\cite{lenagan-s:nillie}.

The question may also be asked for Lie algebras; one then has the
concrete constructions described in~\S\ref{ss:examples}. Alas, none of
their enveloping algebras seem to be nil, though they may be written
as the sum of two nil subalgebras (Proposition~\ref{prop:sum}); in
particular, all their homogeneous elements are nil.

In~\S\ref{ss:assoc} we construct a natural self-similar associative
algebra from a self-similar Lie algebra. In~\S\ref{ss:ssg} we
construct a natural self-similar Lie algebra from a self-similar group
acting `cyclically' on an alphabet of prime order. We relate in this
manner the examples from~\S\ref{ss:examples} to the well-studied
Grigorchuk~\cite{grigorchuk:burnside} and
Gupta-Sidki~\cite{gupta-s:burnside} groups:
\begin{thm}
  The Lie algebra associated with the Grigorchuk group, after
  quotienting by its centre, is self-similar, nil, and of maximal
  class. Its description as a self-similar algebra is given
  in~\S\ref{ss:grla}.

  The Lie algebra associated with the Gupta-Sidki group is
  self-similar, nil, and of finite Gelfand-Kirillov dimension. Its
  description as a self-similar algebra is given in~\S\ref{ss:gsla}.
\end{thm}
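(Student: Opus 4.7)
The plan is to treat the two algebras in parallel, since they differ only by the prime $p\in\{2,3\}$ and by the self-similarity recursion describing the generators. First I would invoke the functor constructed in~\S\ref{ss:ssg}, which takes a self-similar group acting cyclically on an alphabet of prime order and returns a self-similar Lie algebra. Applied to the Grigorchuk group (acting on a $2$-letter alphabet) and to the Gupta-Sidki group (acting on a $3$-letter alphabet), this immediately yields candidate self-similar Lie algebras over $\Ft$ and $\mathbb F_3$ respectively, together with explicit self-similarity maps $\LL\to\LL\wr\Der X$ built from the wreath recursions of the generators. In the Grigorchuk case, the construction at the group level has a non-trivial kernel coming from the fact that the standard generators have order $2$ and the recursion forces a central element; this is exactly what one kills by passing to $\LL/Z(\LL)$, and so the quotient is the genuinely self-similar object. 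The explicit descriptions announced in~\S\ref{ss:grla} and~\S\ref{ss:gsla} then give the first clause of each statement.

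Next, I would establish nillity by means of the sufficient condition developed in~\S\S\ref{ss:ssla}--\ref{ss:wbr}: it suffices to show that $\ad(x)$ acts nilpotently on $\LL$ for every generator $x$, so that Corollary~\ref{cor:nil} applies. This is the key computation, and it is the step I expect to be the main obstacle: the generators of $\LL$ are given by recursive formulas of the form $x\mapsto y\otimes e+\cdots+\partial$ in $\LL\wr\Der X$, and ad-nilpotency has to be propagated through the recursion. Concretely I would argue by induction on the depth at which an element is visible in the self-similar decomposition, using the fact that in characteristic $p$ the derivation $\partial$ on $X=\Fp[t]/(t^p)$ satisfies $\partial^p=0$, together with the fact that the images of the generators inside the `rest' factor $\LL\otimes X$ are themselves built from generators at the next level. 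The base case is that each generator has order $p$ as an adjoint endomorphism on the finite-dimensional pieces where it first appears.

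With self-similarity and nillity in hand, the remaining point is growth. For Grigorchuk's algebra I would verify the maximal-class property by inspecting the graded pieces: the self-similar description forces each homogeneous component beyond dimension $1$ to be one-dimensional, matching the computations of Bartholdi in~\cite{bartholdi:lcs}; this is the Lie-theoretic counterpart of the known lower central series of the Grigorchuk group. For Gupta-Sidki, finite Gelfand-Kirillov dimension follows from Proposition~\ref{prop:GKupper}: the wreath recursion expresses elements of $\LL$ of degree $n$ in terms of elements of strictly smaller degree in three copies at the next level, and the standard contracting-type estimate gives a polynomial bound on $\dim\LL_{\le n}$. A matching lower bound comes from Proposition~\ref{prop:GKlower}, applied to the non-trivial generator whose recursion contains a diagonal term, which prevents the algebra from being finite-dimensional.

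The main difficulty, as noted, is the nil criterion: one must verify ad-nilpotency on a generating set in a way that survives the passage through the wreath recursion. Once this is established for the two specific recursions, the general machinery of \S\ref{ss:ssla}--\ref{ss:wbr} does the rest uniformly, and the growth statements follow from the same recursions by the general propositions on $\GKdim$.
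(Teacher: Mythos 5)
Your outline gets the architecture right at the level of headings, but the central step --- nillity --- rests on a misreading of the criterion, and the argument you propose in its place has a genuine hole. Corollary~\ref{cor:nil} does \emph{not} say ``if $\ad(x)$ is nilpotent for every generator $x$ then $\LL$ is nil''; that implication is false for infinite-dimensional Lie algebras in general, since ad-nilpotency is not preserved under sums or brackets and hence does not propagate from a generating set. The actual hypothesis of Corollary~\ref{cor:nil} is that $\LL$ is generated by \emph{bounded}, \emph{$\ell$-evanescent} elements. The whole point of the machinery in \S\ref{ss:ssla} is that these two properties, unlike ad-nilpotency, are stable under brackets and $p$-th powers (Lemmata~\ref{lem:bounded} and~\ref{lem:evanescent}), so they hold for \emph{every} element of the subalgebra they generate; one then shows that a bounded evanescent element $a$ satisfies $a^{p^s}\in\varpi^2\W(X)$ for some $s$ (Lemma~\ref{lem:evanescent2}, via an $\R$-grading whose dilation is a root of $\lambda^{\ell+1}-p\lambda^\ell+\lambda-1$, which forces every homogeneous component of $a^{p^s}$ to have degree at least $p^s$ while the bounded norm grows only linearly in $s$), and finally that the associative algebra generated by $\varpi^2\W(X)$ is locally nilpotent (Lemma~\ref{lem:sz}). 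Your proposed ``induction on the depth at which an element is visible,'' using only $\partial_x^p=0$, supplies none of this: it does not explain why an arbitrary Lie word in the generators, whose self-similar decomposition spreads over infinitely many levels, is nil. Concretely, the paper verifies that the codimension-one ideal $\langle t\rangle$ (resp.\ $\langle b,c,d\rangle$) is generated by bounded $1$-evanescent (resp.\ $3$-evanescent) elements and that the remaining generator $a$ satisfies $a^p=0$; this is quite different from checking ad-nilpotency of generators.

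Two smaller points. First, the passage from the group to the Lie algebra is not a formal consequence of the construction in \S\ref{ss:ssg}: the paper explicitly leaves the general comparison between $\LL(G)$ and $\LL^\Bbbk(G)$ as a conjecture, and proves the identifications for these two groups by hand (Theorems~\ref{thm:isogg} and~\ref{thm:isogs}), matching explicit bases coming from the lower central series computations; moreover the centre of $\LL^{\Ft}(G)$ that must be killed is spanned by the elements $W([a,b]^2)$, not by anything forced directly by the generators having order $2$. Second, the lower bound on growth comes from regular weak branching --- the subspaces $X^{\otimes n}\otimes\Bbbk a$ for $a$ in the branching ideal all lie in $\LL$ --- not from ``a diagonal term in the recursion of a generator.'' These two points are repairable, but the nillity step as you describe it would not go through.
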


\noindent We might dare the following
\begin{conj}
  If $G$ is a torsion group, then its associated Lie algebra is nil.
\end{conj}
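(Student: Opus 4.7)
Since the statement is advanced as a conjecture, I can only outline a strategy. A first reduction uses that nillity is a local property: it suffices to prove the conjecture for every finitely generated torsion subgroup of $G$. A primary decomposition of the associated graded object lets one further assume that $G$ is a finitely generated torsion $p$-group and that $\LL$ is the graded Lie algebra over $\Fp$ arising from the Jennings $p$-dimension series of $G$.

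My plan is then the following. Given $g\in G$ of order $p^n$, the relation $(g-1)^{p^n}=0$ in $\Fp[G]$ controls the image $x$ of $g-1$ in the associated graded algebra. Because that associated graded algebra is the restricted enveloping algebra of $\LL$, one hopes to deduce that $x^{[p]^n}=0$ in $\LL$, and then to convert this into ad-nilpotence by iterating the identity $\ad(x)^p=\ad(x^{[p]})$ valid in characteristic $p$. I would then try to embed $G$ into a branch-type completion acting on a rooted tree, apply the self-similar construction of \S\ref{ss:ssg}, and invoke the ad-nilpotency criterion of \S\S\ref{ss:ssla}--\ref{ss:wbr}, in the same spirit as the treatment of the Grigorchuk and Gupta-Sidki groups in the theorem above.

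The main obstacle is passing from ``$g$ has order $p^n$ in $G$'' to ``$\ad(x)$ is a nilpotent endomorphism of all of $\LL$'': the order of $g$ bounds only the $p$-nilpotence of a single homogeneous element, not the global degree of nilpotence of its adjoint action on the whole algebra. Indeed, the parallel statement for associative algebras --- that $\Fp[G]$ is itself nil whenever $G$ is a finitely generated torsion $p$-group --- is a famous open problem, closely tied to the general Burnside problem, so the present conjecture is at least as deep. A secondary difficulty is that a general torsion group need not be residually-$p$, nor admit a faithful self-similar action, so the reduction to the wreath-recursive machinery that powers the explicit examples of \S\ref{ss:examples} cannot be made unconditional; some substitute tool, perhaps an inverse-limit construction of a self-similar algebra over a pro-$p$ completion, would first have to be developed.
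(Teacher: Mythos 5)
There is nothing in the paper to compare your proposal against: the statement is explicitly offered as a conjecture, and the author gives no proof or proof sketch. Your submission is accordingly honest --- it is a research programme, not a proof --- and your diagnosis of where the difficulty lies is essentially correct. Note, though, that for a single torsion element $g$ of order $p^n$ and weight $d$, the class $x=\overline{g-1}\in\gamma_d^p/\gamma_{d+1}^p$ is automatically nil, even without invoking $(g-1)^{p^n}=0$ in $\Fp[G]$: the $p$-mapping of $\LL^\Bbbk(G)$ is defined by $(g\gamma_{d+1}^p)^p=g^p\gamma_{pd+1}^p$, so iterating it $n$ times kills $x$. The entire content of the conjecture lies in passing from these distinguished homogeneous classes to arbitrary elements of $\LL^\Bbbk(G)$: a general element is a $\Bbbk$-linear combination of such classes, the $p$-mapping is only semilinear and carries Jacobson correction terms on sums, and torsion of the individual $g_i$ gives no control over $\bigl(\sum\lambda_i\overline{g_i-1}\bigr)^{p^n}$ or over the nilpotence degree of $\operatorname{ad}$ of such a sum. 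Your comparison with the open nillity problem for the augmentation ideal $\varpi\le\Fp[G]$, $G$ a finitely generated torsion $p$-group, is the right one; after identifying $\bigoplus_{n\ge0}\varpi^n/\varpi^{n+1}$ with the restricted enveloping algebra of $\LL^\Bbbk(G)$ the two questions are close relatives, which is why the conjecture is stated and not proved.

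Two of your preliminary reductions are themselves not automatic and should not be presented as routine. First, ``nillity is local'' does not straightforwardly reduce the problem to finitely generated subgroups: for $H\le G$ the natural map $\LL^\Bbbk(H)\to\LL^\Bbbk(G)$ need be neither injective nor degree-preserving, because $\gamma_n^p(H)$ can be strictly smaller than $\gamma_n^p(G)\cap H$, so the Lie subalgebra of $\LL^\Bbbk(G)$ generated by finitely many classes is not simply the Lie algebra of the subgroup those group elements generate. Second, a torsion group is not in general the direct product of its $p$-primary parts, so the ``primary decomposition'' reducing to $p$-groups also requires an argument (though over a field of characteristic $p$ the dimension series is largely insensitive to prime-to-$p$ structure, so something along these lines is plausibly salvageable). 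Neither point is fatal to a sketch of an open problem, but both would have to be settled before the self-similar machinery of \S\ref{ss:ssla}--\S\ref{ss:wbr}, which requires a faithful self-similar action to begin with, could be brought to bear on a general torsion group.
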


\subsection{Preliminaries}\label{ss:notation}
All our algebras (Lie or associative) are over a commutative domain
$\Bbbk$. If $\Bbbk$ is of positive characteristic $p$, a Lie algebra
$\LL$ over $\Bbbk$ may be \emph{restricted}, in the sense that it
admits a semilinear map $x\mapsto x^p$ following the usual axioms of
raising-to-the-$p$th-power, e.g.\ $(\xi a)^2=\xi^2a^2$ and
$(a+b)^2=a^2+b^2+[a,b]$ if $p=2$. Note that, if $\LL$ is centreless,
then the $p$-mapping is unique if it exists. Unless otherwise stated
(see e.g.~\S\ref{ss:gla}), we will assume our algebras are
restricted. We will also, by convention, say that algebras in
characteristic $0$ are restricted. See~\cite{jacobson:restr}
or~\cite{jacobson:lie}*{Chapter~V} for an introduction to restricted
algebras.

The tensor algebra over a vector space $V$ is written $T(V)$. It is
the free associative algebra $\bigoplus_{n\ge0}V^{\otimes n}$. We
recall that every Lie algebra has a \emph{universal enveloping
  algebra}, unique up to isomorphism. In characteristic $0$, or for
unrestricted Lie algebras, it is the associative algebra
\[\U(\LL)=T(\LL)/\langle a\otimes b-b\otimes a-[a,b]\text{ for all }a,b\in\LL\rangle.\]
If however $\LL$ is restricted, it is the associative algebra
\[\U(\LL)=T(\LL)/\langle a\otimes b-b\otimes a-[a,b]\text{ for all
}a,b\in\LL,a^{\otimes p}-a^p\text{ for all }a\in\LL\rangle.\]
No confusion should arise, because it is always the latter algebra
that is meant in this text if $\Bbbk$ has positive characteristic.

Wreath products of Lie algebras have appeared in various places in the
literature~\cites{jurman:glamc3,caranti-m-n:glamc1,netreba-s:wreath,dipietro:phd,shmelkin:wreath,sullivan:wreath}. We
use them as a fundamental tool in describing and constructing Lie
algebras; though they are essentially (at least, in the case of a
wreath product with the trivial Lie algebra $\Bbbk$) equivalent to the
inflation/deflation procedures of~\cite{caranti-m-n:glamc1}.

\section{Self-similar Lie algebras}\label{ss:ssla}
Just as self-similar sets contain many ``shrunk'' copies of
themselves, a self-similar Lie algebra is a Lie algebra containing
embedded, ``infinitesimal'' copies of itself:
\begin{defn}\label{defn:ssla}
  Let $X$ be a commutative ring with $1$, and let $\Der X$ denote the
  ring of derivations of $X$. A Lie algebra $\LL$ is
  \emph{self-similar} if it is endowed with a homomorphism
  \[\psi:\LL\to X\otimes\LL\rtimes\Der X=:\LL\wr\Der X,\]
  in which the derivations act on $X\otimes\LL$ by deriving the first
  co\"ordinate.

  If emphasis is needed, $X$ is called the \emph{alphabet} of $\LL$,
  and $\psi$ is its \emph{self-similarity structure}.
\end{defn}
If $\Bbbk$ has characteristic $p$, then $\LL$ may be a
\emph{restricted} Lie algebra, see~\S\ref{ss:notation}. Note that
$\Der X$ is naturally a restricted Lie algebra, and $X\otimes\LL$ too
for the $p$-mapping $(x\otimes a)^p=x^p\otimes a^p$. A restricted
algebra is \emph{self-similar} if furthermore $\psi$ preserves the
$p$-mapping.

We note that, under this definition, every Lie algebra is self-similar
--- though, probably, not interestingly so; indeed the definition does
not forbid $\psi=0$. The next condition makes it clear in which sense
an example in interesting or not.

A self-similar Lie algebra $\LL$ has a \emph{natural action} on the
free associative algebra $T(X)$, defined as follows: given $a\in\LL$
and an elementary tensor $v=x_1\otimes\cdots\otimes x_n\in T(X)$, set
$a\cdot v=0$ if $n=0$; otherwise, compute $\psi(a)=\sum y_i\otimes
a_i+\delta\in\LL\wr\Der X$, and set
\[a\cdot v=\sum x_1y_i\otimes(a_i\cdot x_2\otimes\cdots\otimes
x_n)+(\delta x_1)\otimes x_2\otimes\cdots\otimes x_n.\]
Finally extend the action by linearity to all of $T(X)$.

We insist that $\LL$ does \emph{not} act by derivations of $T(X)$ qua
free associative algebra, but only by endomorphisms of the underlying
$\Bbbk$-module.

A self-similar Lie algebra is \emph{faithful} if its natural action is
faithful. From now on, we will silently assume that all our Lie
algebras satisfy this condition.

There are natural embeddings $X^{\otimes n}\to X^{\otimes n+1}$, given
by $v\mapsto v\otimes1$; and for all $a\in\LL$ we have
\begin{equation}\label{eq:extendaction}
  \begin{CD}
    X^{\otimes n} @>>> X^{\otimes n+1}\\
    @VaVV @VVaV\\
    X^{\otimes n} @>>> X^{\otimes n+1}
  \end{CD}
\end{equation}
We define $R(X)=\bigcup_{n\ge0}X^{\otimes n}$ under these embeddings,
and note that $\LL$ acts on $R(X)$.

Self-similar Lie algebras may be \emph{defined} by considering
$\mathscr F$ a free Lie algebra, and $\psi:\mathscr F\to\mathscr
F\wr\Der X$ a homomorphism. The self-similar Lie algebra defined by
these data is the quotient of $\mathscr F$ that acts faithfully on
$T(X)$, namely, the quotient of $\mathscr F$ by the kernel of the
action homomorphism $\mathscr F\to\End_\Bbbk(T(X))$.

We may iterate a self-similarity structure, so as to enlarge the
alphabet: by abuse of notation we denote by $\psi^n:\LL\to X^{\otimes
  n}\otimes\LL\rtimes\Der(X^{\otimes n})=\LL\wr\Der(X^{\otimes n})$
the $n$-fold iterate of the self-similarity structure $\psi$.

\subsection{Matrix recursions}
We assume now that $X$ is finite dimensional, with basis
$\{x_1,\dots,x_d\}$. For $x\in X$, consider the $d\times d$ matrix
$m_x$ describing multiplication by $x$ on $X$. Similarly, for a
derivation $\delta\in\Der X$, consider the $d\times d$ matrix
$m_\delta$ describing its action on $X$. Consider now the
homomorphism
\begin{equation}\label{eq:matext}
\psi':\begin{cases}
  \LL\to\Mat_{d\times d}(\LL\oplus\Bbbk)\\
  a\mapsto\sum m_{y_i}a_i+m_\delta & \text{ if }\psi(a)=\sum
  y_i\otimes a_i+\delta.
\end{cases}
\end{equation}
We will see in~\S\ref{ss:assoc} that $\psi'$ extends to a
homomorphism, again written
$\psi:\U(\LL)\to\Mat_{d\times d}(\U(\LL))$, where
$\U(\LL)$ denotes the universal enveloping algebra of $\LL$. We may
therefore use this convenient matrix notation to define self-similar
Lie algebras, and study their related enveloping algebras.

\subsection{Gradings}
Assume that $X$ is graded, say by an abelian group $\Lambda$. Then, as
a module, $T(X)$ is $\Lambda[\lambda]$-graded, where the degree of
$x_1\otimes\cdots\otimes x_n$ is $\sum_{i=1}^n\deg(x_i)\lambda^{i-1}$
for homogeneous $x_1,\dots,x_n$. Here $\lambda$ is a formal parameter,
called the \emph{dilation} of the grading; though we sometimes force
it to take a value in $\R$.

If $\LL$ is both a graded Lie algebra and a self-similar Lie algebra,
its grading and self-similarity structures are compatible if, for
homogeneous $a\in\LL$, one has
$\deg(a)=\deg(y_i)+\lambda\deg(a_i)=\deg(\delta)$ for all $i$, where
$\psi(a)=\sum y_i\otimes a_i+\delta$. In other words, $\psi$ is a
degree-preserving map.

We prefer to grade the ring $X$ negatively, so that $\LL$, acting by
derivations, is graded in positive degree. This convention is of
course arbitrary.

\subsection{The full self-similar algebra}\label{ss:full}
There is a maximal self-similar algebra acting faithfully on $T(X)$
and $R(X)$, which we denote $\W(X)$. It is the set of derivations of
$T(X)$ such that~\eqref{eq:extendaction} commutes. As a vector space,
\[\W(X)=\prod_{n=0}^\infty X^{\otimes n}\otimes\Der X.\]
Its self-similarity structure is defined by
\[\psi(a_0,a_1,\dots)=\Big(\sum x_1\otimes b_1,\sum x_2\otimes b_2,\dots\Big)+a_0\]
where $a_i=\sum x_i\otimes b_i\in X\otimes\W(X)$ for all $i\ge1$. It is maximal in the
sense that every self-similar Lie algebra with alphabet $X$ is a
subalgebra of $\W(X)$. Note also that $\W(X)$ is restricted.

The Lie bracket and $p$-mapping on $\W(X)$ can be described explicitly
as follows. Given $a=x_1\otimes\cdots\otimes x_m\otimes\delta$ and
$b=y_1\otimes\cdots\otimes y_n\otimes\epsilon$, we have
\begin{align*}
  [a,b]&=\begin{cases}
  x_1y_1\otimes\cdots\otimes x_my_m\otimes\delta
  y_{m+1}\otimes\cdots\otimes y_n & \text{ if }m<n,\\
  x_1y_1\otimes\cdots\otimes x_my_m\otimes[\delta,\epsilon] &
  \text{ if }m=n,\\
  -x_1y_1\otimes\cdots\otimes x_ny_n\otimes\epsilon
  x_{n+1}\otimes\cdots\otimes x_m & \text{ if }m>n,
\end{cases}\\
\text{and }a^p&=x_1^p\otimes\cdots\otimes x_m^p\otimes\delta^p.
\end{align*}

We note that, if $X$ is a $\Lambda$-graded ring, then $\W(X)$ is
a $\Lambda[\lambda]$-graded self-similar Lie algebra; for homogeneous
$x_1,\dots,x_n\in X$ and $\delta\in\Der X$, we set
\[\deg(x_1\otimes\cdots\otimes x_n\otimes\delta)=\sum_{i=1}^n\deg(x_i)\lambda^{i-1}+\lambda^n\deg(\delta).\]

We shall consider, here, subalgebras of $\W(X)$ that satisfy
a finiteness condition. The most important is the following:
\begin{defn}\label{defn:fs}
  An element $a\in\W(X)$ is \emph{finite state} if there exists a
  finite-dimensional subspace $S$ of $\W(X)$, containing $a$, such
  that the self-similarity structure $\psi:\W(X)\to\W(X)\wr\Der X$
  restricts to a map $S\to S\otimes X\oplus\Der X$.
\end{defn}
More generally, define a self-map $\widehat\psi$ on subsets of
$\W(X)$, by
\[\widehat\psi(V)=\bigcap\{W\mid W\le\W(X)\text{ with }\psi(V)\le X\otimes W\oplus\Der X\}.\]
Then $a\in\W(X)$ is finite state if and only if
$\sum_{n\ge0}\widehat\psi^n(\Bbbk a)$ is finite-dimensional.

If $X$ is finite dimensional, a finite state element may be described
by a finite amount of data in $\Bbbk$, as follows. Choose a basis
$(e_i)$ of $S$, and write $a$ as well as the co\"ordinates of
$\psi(e_i)$ in that basis, for all $i$.

\subsection{Hausdorff dimension}\label{ss:hausdorff}
Consider a self-similar Lie algebra $\LL$, with self-similarity
structure $\psi:\LL\to\LL\wr\Der X$. As noted in~\S\ref{ss:full},
$\LL$ is a subalgebra of $\W(X)$; and both act on $X^{\otimes n}$ for
all $n\in\N$.

We wish to measure ``how much of $\W(X)$'' is ``filled in'' by
$\LL$. We essentially copy, and translate to Lie algebras, the
definitions from~\cite{bartholdi:branchalgebras}*{\S3.2}.

Let $\LL_n$, respectively $\W(X)_n$, denote the image of $\LL$,
respectively $\W(X)$, in $\Der(X^{\otimes n})$. We compute
\[\dim(\W(X)_n)=\sum_{i=0}^{n-1}(\dim X)^i\dim(\Der
X)=\frac{\dim(X)^n-1}{\dim X-1}\dim(\Der X),\]
and define the \emph{Hausdorff dimension} of $\LL$ by
\[\Hdim(\LL)=\liminf_{n\to\infty}\frac{\dim\LL_n}{\dim\W(X)_n}=\liminf_{n\to\infty}\frac{\dim\LL_n}{\dim(X)^n}\frac{\dim
  X-1}{\dim\Der X}.\]

Furthermore, there may exist a subalgebra $\mathscr P$ of $\Der X$
such that $\psi:\LL\to\LL\wr\mathscr P$; our typical examples will
have the form $X=\Bbbk[x]/(x^d)$, and $\mathscr P=\Bbbk d/dx$ a
one-dimensional Lie algebra; in that case, $\LL$ is called an
\emph{algebra of special derivations}. We then define the
\emph{relative Hausdorff dimension} of $\LL$ by
\[\Hdim_{\mathscr P}(\LL)=\liminf_{n\to\infty}\frac{\dim\LL_n}{\dim(X)^n}
\frac{\dim X-1}{\dim\mathscr P}.\]

\subsection{Bounded Lie algebras}
We now define a subalgebra of $\W(X)$, important because contains many
interesting examples, yet gives control on the nillity of the
algebra's $p$-mapping. We suppose throughout this~\S\ that $\Bbbk$ is
a ring of characteristic $p$, so that $\W(X)$ is a restricted Lie
algebra.

We suppose that $X$ is an \emph{augmented} algebra: there is a
homomorphism $\varepsilon:X\to\Bbbk$ with kernel $\varpi$. This gives
a splitting $X^{\otimes n+1}\to X^{\otimes n}$
of~\eqref{eq:extendaction}, given by $v\otimes
x_{n+1}\mapsto\varepsilon(x_{n+1})v$. The algebra $X^{\otimes n}$ also
admits an augmentation ideal,
\[\varpi_n=\ker(\varepsilon\otimes\cdots\otimes\varepsilon)=\sum X\otimes\cdots\otimes\varpi\otimes\cdots\otimes X.\]
The union of the $\varpi_n$ defines an augmentation ideal again
written $\varpi$ in $R(X)$.

There is a natural action of $R(X)$ on $\W(X)$: given
$a=x_1\otimes\cdots\otimes x_m\in R(X)$ and
$b=y_1\otimes\cdots\otimes y_n\otimes\delta\in \W(X)$, first replace
$a$ by $a\otimes1\otimes\cdots\otimes1$ with enough $1$'s so that
$m\ge n$; then set
\[a\cdot b=\varepsilon(y_{n+1})\cdots\varepsilon(y_m)x_1y_1\otimes\cdots\otimes x_ny_n\otimes\delta.\]

\begin{defn}
  An element $a\in \W(X)$ is \emph{bounded} if there exists a constant
  $m$ such that $\varpi^ma=0$. Writing $a=(a_0,a_1,\dots)$, this means
  $\varpi^ma_i=0$ for all $i$. The set of bounded elements is written
  $M(X)$.

  The \emph{bounded norm} $\|a\|$ of $a$ is then the minimal such $m$.
\end{defn}

\noindent The following statement is inspired
by~\cite{shestakov-z:nillie}*{Lemma~1}.
\begin{lem}\label{lem:bounded}
  The set $M=M(X)$ of bounded elements forms a restricted Lie
  subalgebra of $\W(X)$.

  More precisely, the bounded norm of $[a,b]$ is at most
  $\max\{\|a\|,\|b\|\}+1$, and $\|a^p\|\le\|a\|+p-1$.
\end{lem}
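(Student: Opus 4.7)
My plan is to verify the three claims by direct computation using the explicit formulas for the Lie bracket, the $p$-mapping, and the $R(X)$-action on $\W(X)$ given at the end of~\S\ref{ss:full}. Closure of $M(X)$ under $\Bbbk$-linear combinations is immediate: if $\varpi^{\|a\|}a=0=\varpi^{\|b\|}b$, then $\varpi^{\max\{\|a\|,\|b\|\}}(\alpha a+\beta b)=0$ for all scalars $\alpha,\beta$, so $M(X)$ is a $\Bbbk$-submodule.

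For the bracket bound, reduce by bilinearity to pure tensors $a=x_1\otimes\cdots\otimes x_m\otimes\delta$ and $b=y_1\otimes\cdots\otimes y_n\otimes\epsilon$ with $m\le n$ (the other case follows by antisymmetry). Set $s=\max\{\|a\|,\|b\|\}$ and take a pure tensor $u=u_1\otimes\cdots\otimes u_k\in\varpi^{s+1}$. When $k\le m$ the explicit formulas immediately give $u\cdot[a,b]=[u\cdot a,b]=0$, since $u\in\varpi^{\|a\|}$ forces $u\cdot a=0$. When $m<k\le n$, a term-by-term comparison of $u\cdot[a,b]$ with $[a,u\cdot b]$ using the bracket formula yields the identity
\[
[a,u\cdot b]=u\cdot[a,b]+w\cdot b,
\]
where $w\in R(X)$ is obtained from $u$ by multiplying its first $m$ factors by $x_1,\ldots,x_m$ and replacing $u_{m+1}$ by $\delta u_{m+1}$. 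This identity arises from Leibniz's rule at position $m+1$, where $[a,b]$ carries the factor $\delta y_{m+1}$ and $\delta(u_{m+1}y_{m+1})=(\delta u_{m+1})y_{m+1}+u_{m+1}(\delta y_{m+1})$; the first three positions of $w$ absorb the factors $x_1,\ldots,x_m$ that turn $x_i(u_i y_i)$ into $(u_ix_i)y_i$. Since $u\in\varpi^{s+1}\subseteq\varpi^{\|b\|}$, we have $u\cdot b=0$ and hence $[a,u\cdot b]=0$. The Leibniz substitution $u_{m+1}\mapsto\delta u_{m+1}$ lowers the augmentation order by at most one, so $w\in\varpi^s\subseteq\varpi^{\|b\|}$ and $w\cdot b=0$; therefore $u\cdot[a,b]=0$. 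The case $k>n$ reduces to the previous one by factoring out the $\varepsilon$-coefficients that appear in the action formula.

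For the $p$-power bound, the formula $a^p=x_1^p\otimes\cdots\otimes x_m^p\otimes\delta^p$ combined with the commutative Frobenius identity $(v\cdot a)^p=v^p\cdot a^p$ in $R(X)$ (valid in characteristic $p$ since the action is componentwise multiplication in a commutative ring) yields $v^p\cdot a^p=(v\cdot a)^p=0$ for any $v\in\varpi^{\|a\|}$. Using that $v^p\in\varpi^p$ for $v\in\varpi$, an elementary decomposition writes an arbitrary element of $\varpi^{\|a\|+p-1}$ as a sum of products of the form $v^p\cdot w$ with $v\in\varpi^{\|a\|}$ and $w\in\varpi^{p-1}$, plus correction terms handled inductively on the augmentation filtration, and yields $\varpi^{\|a\|+p-1}a^p=0$.

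The main obstacle is the bookkeeping in the bracket case around the Leibniz step at position $m+1$, which is precisely the source of the ``$+1$'' in the bound; the cleanest organisation is via the identity displayed above, with an induction on the level $k$ of $u$ handling the recursive appearance of $w\cdot b$.
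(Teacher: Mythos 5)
Your treatment of the bracket bound is correct and is in substance the paper's own argument: the paper proves that part by the single observation that $\varpi^m f=0$ implies $\varpi^{m+1}\delta f=0$, which is exactly the Leibniz identity $[a,u\cdot b]=u\cdot[a,b]+w\cdot b$ that you make explicit. One small caveat: ``reducing by bilinearity to pure tensors'' is not literally legitimate, because a pure-tensor summand of a bounded element need not itself be bounded; the correct observation is that your identity holds for arbitrary $a,b$ (it is linear in each, with $w=w(a,u)$ independent of $b$ and lying in $\varpi^{\max\{\|a\|,\|b\|\}}$ whenever $u\in\varpi^{\max\{\|a\|,\|b\|\}+1}$), and the final step uses only $u\cdot b=0$ and $w\cdot b=0$, i.e.\ the boundedness of $b$ as a whole. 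With that rephrasing the bracket part is sound.

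The $p$-power part has a genuine gap, in two respects. First, the formula $a^p=x_1^p\otimes\cdots\otimes x_m^p\otimes\delta^p$ gives the $p$-mapping only on \emph{elementary tensors}, and the $p$-mapping is not additive, so you cannot reduce to that case by linearity. For a general component $a_i=\sum_j f_j\otimes\delta_j$ one must expand $a_i^p$ by Jacobson's formula, $a_i^p=\sum_j f_j^p\otimes\delta_j^p+(\text{Lie monomials of weight }p)$, and it is precisely these weight-$p$ commutators --- each bracket costing $+1$ in bounded norm by the part of the lemma you have already proved --- that produce the $+(p-1)$ in the statement. Your argument never touches them. (Note that for an elementary tensor one gets the much stronger bound $\|a^p\|\le\|a\|$ outright, since $uf=0$ implies $uf^p=(uf)f^{p-1}=0$; so the difficulty you attack with the Frobenius identity is not where the difficulty actually lies.) Second, the decomposition you invoke is false: a product $v^p\cdot w$ with $v\in\varpi^{\|a\|}$ and $w\in\varpi^{p-1}$ lies in $\varpi^{p\|a\|+p-1}$, which is in general a strictly smaller ideal than $\varpi^{\|a\|+p-1}$, so a general element of $\varpi^{\|a\|+p-1}$ is \emph{not} a sum of such products, and the unspecified ``correction terms handled inductively'' cannot repair this. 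The intended route is the paper's: apply Jacobson's formula and then the bracket estimate $(p-1)$ times.
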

\begin{proof}
  If $\varpi^mf=0$ and $\delta\in\Der X$, then
  $\varpi^{m+1}\delta f=0$. It follows that, for $a\in M$ of
  bounded norm $m$ and $b\in M$ of bounded norm $n$, the bounded norm
  of $[a,b]$ is at most $\max\{m,n\}+1$.

  Consider next $a=(a_0,a_1,\dots)\in M$, of bounded norm $m$. Write
  each $a_i=\sum f_i\otimes \delta_i$. Then $a_i^p=\sum
  f_i^p\otimes \delta_i^p+$commutators of weight $p$; now
  $\varpi^mf_i^p\otimes\delta_i^p=0$ and, by the first paragraph,
  commutators are annihilated by $\varpi^{m+p-1}$.
\end{proof}

We now suppose for simplicity that the alphabet has the form
$X=\Bbbk[x]/(x^p)$. Its augmentation ideal is $\varpi=xX$, and
satisfies $\varpi^p=0$. We seek conditions on elements $a\in M(X)$
that ensure that they are \emph{nil}, that is, there exists
$n\in\N$ such that $a^{p^n}=(((a^p)^p)\cdots)^p=0$. The standard
derivation $d/dx$ of $X$ is written $\partial_x$.

\begin{defn}
  An element $a\in \W(X)$ is \emph{$\ell$-evanescent}, for $\ell\in\N$,
  if when we write $a=(a_0,a_1,\dots)$, each $a_i$ has the form $\sum
  b_i\otimes c_i\otimes \partial_x$ with $b_i\in
  X^{\otimes\max\{i-\ell,0\}},c_i\in X^{\otimes\min\{i,\ell\}}$, and
  $\deg(c_i)<(p-1)\ell$.

  An element is \emph{evanescent} if it is $\ell$-evanescent for some
  $\ell\in\N$.
\end{defn}
In words, $a$ is $\ell$-evanescent if, in all co\"ordinates $a_i$ of
$a$, the derivation is $\partial_x$ and the maximal degree is
\emph{never} reached in each of the last $\ell$ alphabet variables.

\begin{lem}\label{lem:evanescent}
  The set of $\ell$-evanescent elements forms a restricted Lie
  subalgebra of $\W(X)$.
\end{lem}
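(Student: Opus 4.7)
My plan is to verify each of the three structural features defining a restricted Lie subalgebra: that the set of $\ell$-evanescent elements, which I will denote $E_\ell$, is a $\Bbbk$-subspace, closed under the Lie bracket, and closed under the $p$-mapping. The first is immediate, since at each level $i$ the $\ell$-evanescent elements form by definition the linear subspace $X^{\otimes\max(i-\ell,0)}\otimes V_{\min(i,\ell)}\otimes\Bbbk\partial_x$, where $V_k\subseteq X^{\otimes k}$ is the span of monomials of total degree strictly less than $(p-1)\ell$.

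For the bracket I would reduce, by bilinearity, to pure tensors and apply the explicit formula from \S\ref{ss:full}. Take $a=u\otimes\partial_x$ at level $m$ and $b=v\otimes\partial_x$ at level $n$, and assume $m\le n$ by antisymmetry. The diagonal case $m=n$ gives $[a,b]=(u_1v_1)\otimes\cdots\otimes(u_mv_m)\otimes[\partial_x,\partial_x]=0$. For $m<n$ the bracket reads
\[[a,b]=(u_1v_1)\otimes\cdots\otimes(u_mv_m)\otimes\partial_x(v_{m+1})\otimes v_{m+2}\otimes\cdots\otimes v_n\otimes\partial_x,\]
and I would analyse its last $\ell$ tensor positions according to whether $m+1$ lies inside or outside that window. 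If $m+1\le n-\ell$ those positions are exactly $v_{n-\ell+1},\dots,v_n$, and their total degree is $<(p-1)\ell$ by $\ell$-evanescence of $b$. If instead $m+1>n-\ell$, then the factor $\partial_x(v_{m+1})$ sits inside the window; since $v_{m+1}\in X$ has degree at most $p-1$, this factor has degree at most $p-2$, so in every monomial of $[a,b]$ at least one of the last $\ell$ factors has degree $<p-1$, forcing the total window degree to be $<(p-1)\ell$.

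For the $p$-mapping, the decisive observation is that $\partial_x^p=0$ in $\Der X$, since $\partial_x^p$ lowers all degrees by $p$ while every element of $X=\Bbbk[x]/(x^p)$ has degree $<p$. Consequently the explicit formula from \S\ref{ss:full} gives $(u\otimes\partial_x)^p=u^p\otimes\partial_x^p=0$ for every pure tensor. A general $a\in E_\ell$ is a sum of such pure tensors; applying Jacobson's identity expresses $a^p$ as the sum of these vanishing pure $p$-th powers plus a $\Bbbk$-linear combination of iterated Lie brackets of the summands, and the latter all lie in $E_\ell$ by the bracket step.

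I expect the bracket analysis to be the main obstacle, specifically the intermediate subcase $n-\ell<m<n$, where the last $\ell$ positions of $[a,b]$ mix contributions from both $u$ and $v$. A naive sum of the degree bounds from $a$ and $b$ would permit up to $2(p-1)\ell$ in the window; the saving feature is that this subcase always places the derivation $\partial_x$ on some factor $v_{m+1}$ inside the window, dropping its degree by $1$ below $p-1$ and preventing every factor there from simultaneously attaining the maximum.
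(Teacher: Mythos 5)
Your proof is correct and takes essentially the same route as the paper's: reduce to elementary tensors, apply the explicit bracket formula from \S\ref{ss:full}, note that $[a,b]=0$ when $m=n$ and that for $n-m\le\ell$ the factor $\partial_x(v_{m+1})$ falls inside the last-$\ell$ window with degree at most $p-2$, and conclude closure under the $p$-mapping from $\partial_x^p=0$. Your additional use of Jacobson's identity to handle $p$-th powers of sums of elementary tensors is a detail the paper's proof leaves implicit, but it does not constitute a different approach.
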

\begin{proof}
  Consider $\ell$-evanescent elements $a,b\in \W(X)$, and without loss
  of generality assume $a=x_1\otimes\cdots\otimes
  x_m\otimes\partial_x$, $b=y_1\otimes\cdots\otimes
  y_n\otimes\partial_x$ and $m\le n$. If $m=n$, then $[a,b]=0$, while
  if $m<n$ then $[a,b]=x_1y_1\otimes\cdots\otimes \partial_x
  y_{m+1}\otimes\cdots\otimes y_n\otimes\partial_x$. If $n-m>\ell$
  then there is nothing to do, while if $n-m\le\ell$ then
  $\deg(\delta y_{m+1})<p-1$ so the total degree in the last $\ell$
  alphabet variables of $[a,b]$ is $<(p-1)\ell$.

  Finally, the $p$-mapping is trivial on elementary tensors
  $x_1\otimes\cdots\otimes x_n\otimes\partial_x$ because
  $\partial_x^p=0$ in $\Der X$.
\end{proof}

\noindent The following statement is inspired
by~\cite{shestakov-z:nillie}*{Lemma~2}.
\begin{lem}\label{lem:evanescent2}
  Let $a\in M(X)$ be evanescent. Then there exists $s\ge1$ so that
  $a^{p^s}\in\varpi^2\W(X)$.
\end{lem}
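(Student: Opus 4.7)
The plan is to iterate the restricted $p$-mapping via Jacobson's formula, exploit evanescence to kill all elementary $p$-th powers, and use boundedness to force the surviving iterated brackets into $\varpi^2\W(X)$.

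First I would observe that since $a$ is evanescent, every elementary tensor $t=y_1\otimes\cdots\otimes y_n\otimes\partial_x$ appearing in $a$ ends in $\partial_x$, and the explicit $p$-mapping formula on $\W(X)$ gives $t^p=y_1^p\otimes\cdots\otimes y_n^p\otimes\partial_x^p=0$ because $\partial_x^p=0$ in $\Der X$. Likewise, the bracket of two tensors at the same level involves $[\partial_x,\partial_x]=0$ and so vanishes. Hence Jacobson's formula expresses $a^p$ entirely as a $\Bbbk$-linear combination of iterated Lie brackets of length $p$ among the elementary tensors of $a$, each of which must mix at least two distinct levels; in particular the level-$0$ component $(a^p)_0$ is forced to vanish, which is already a first step toward $\varpi^2\W(X)$.

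Next, I would analyze how mixed-level brackets affect the first tensor coordinate. A bracket $[a_i,a_n]$ at levels $i<n$ multiplies the first coordinate of $a_n$ by that of $a_i$ (when $i\ge1$), or applies $\partial_x$ to it (when $i=0$). Iterating, nested $p^s$-fold brackets combine several such multiplications with applications of $\partial_x$. Combining this with the bounded-norm growth $\|a^{p^s}\|\le\|a\|+s(p-1)$ from Lemma~\ref{lem:bounded} and the evanescence deficit $\deg(c_i)<(p-1)\ell$ on the last $\ell$ coordinates, one shows that for $s$ large enough every surviving term in $a^{p^s}$ has its first coordinate of $x$-degree at least $2$, hence lies in $\varpi^2\W(X)$. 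The evanescence hypothesis is crucial here, because it prevents nested $\partial_x$-actions from being forced onto maximal-degree slots (which would only produce zero) and thereby keeps the combinatorics from degenerating.

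The main obstacle will be the combinatorial degree accounting: producing a uniform threshold $s$ from the interplay of the bounded norm, the evanescence deficit, and the accumulated multiplications and derivations in nested brackets. The cleanest organization is likely via an auxiliary filtration on $\W(X)$ compatible with both the bracket and the $p$-mapping, under which one step of the $p$-mapping shifts the filtration by a controllable amount; once the filtration index surpasses the threshold corresponding to $\varpi^2\W(X)$, the conclusion follows. This would mirror the strategy of Shestakov--Zelmanov's Lemma~2, adapted to the generality of $\W(X)$.
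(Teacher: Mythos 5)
Your opening observations are correct but only carry you a very short distance: elementary $p$-th powers die because $\partial_x^p=0$, same-level brackets die because $[\partial_x,\partial_x]=0$, and hence the $\Der X$-component of $a^{p}$ vanishes; the paper does not even need Jacobson's formula for this. The genuine content of the lemma is entirely in your second paragraph, and there are two problems with it. First, you have misread the target: $\varpi$ in the conclusion is the augmentation ideal of $R(X)=\bigcup_n X^{\otimes n}$, so $\varpi^2\W(X)$ consists of elements of the form $x'x''b$ with $x',x''$ lying in \emph{any} (possibly distinct) tensor slots, not elements whose \emph{first} coordinate has $x$-degree at least $2$. In the case $p=2$ (needed for the Grigorchuk application) one has $x^2=0$ in $X=\Bbbk[x]/(x^2)$, so your condition would force $a^{2^s}=0$ outright, which is false in general and would render Lemma~\ref{lem:sz} pointless. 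What is actually proved is that $a^{p^s}\in\varpi_j^2\W(X)$ for a depth $j$ growing linearly in $s$, where $\varpi_j$ is the augmentation ideal of $X^{\otimes j}$.

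Second, the ``auxiliary filtration \dots under which one step of the $p$-mapping shifts the filtration by a controllable amount'' is precisely the missing idea, not a bookkeeping detail. The paper's mechanism is: put an $\R$-grading on $\W(X)$ with dilation $\lambda$ the largest positive root of $\lambda^{\ell+1}-p\lambda^\ell+\lambda-1$, and check $1<\lambda<p$. Evanescence --- the degree deficit in the last $\ell$ slots --- is exactly what guarantees that every homogeneous component of $a$ has degree $>1$, hence every homogeneous component of $a^{p^s}$ has degree $\ge p^s$. Boundedness gives $\|a^{p^s}\|\le\|a\|+(p-1)s$, which confines the part of $a^{p^s}$ not already in $\varpi_j^2\W(X)$ to depths $i<j$ with $j$ linear in $s$; but at depth $<j$ the maximal attainable degree is about $\lambda^j$, and since $\lambda<p$ the two requirements $p^s\le\lambda^j$ and $j=O(s)$ are incompatible for large $s$, so those shallow components vanish. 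This quantitative race between $p^s$ and $\lambda^{O(s)}$ is the heart of the proof; your sketch does not identify it, and tracking only the first tensor coordinate through nested brackets cannot produce it (brackets against depth-$0$ terms apply $\partial_x$ to the first coordinate and \emph{lower} its $x$-degree rather than raise it).
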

\begin{proof}
  We put an $\R$-grading on $\W(X)$, by using the natural
  $\{1-p,\dots,0\}$-grading of $X$ and choosing for $\lambda$ the
  largest positive root of
  $f(\lambda)=\lambda^{\ell+1}-p\lambda^\ell+\lambda-1$. We note that
  $f(+\infty)=+\infty$ and $f(0)=-1<0$, so $f$ has one or three
  positive roots. Next, $f'(0)>0$ and $f'$ has at most two positive
  roots in $\R_+$, while $f(1)=1-p\le f(0)$, so $f$ has at most one
  extremum in $(1,\infty)$. Finally, $f(p)=p-1>0$, so $f$ has a unique
  zero in $(1,p)$ and we deduce $\lambda\in(1,p)$.

  Consider a homogeneous component $h=x_1\otimes\cdots\otimes
  x_n\otimes\partial_x$ of $a$. Because $\deg(\partial_x)=1$ and
  $-\deg(x_i)\le p-1$ for all $i$ and $-\deg(x_m)\le p-2$ for some
  $m\in\{n-\ell+1,\dots,n\}$, we have
  \begin{align*}
    \deg(h)&\ge\lambda^n-(p-1)(1+\lambda+\cdots+\lambda^{n-1})+\lambda^m\\
    &\ge\lambda^n-(p-1)\frac{\lambda^n-1}{\lambda-1}+\lambda^{n-\ell}=\frac{\lambda^{n-\ell}}{\lambda-1}f(\lambda)+\frac{p-1}{\lambda-1}\\
    &=\frac{p-1}{\lambda-1}>1;
  \end{align*}
  so every homogeneous component of $a^{p^s}$ has degree $\ge
  p^s$.

  We now use the assumption $a\in M(X)$, say $\|a\|=m$. Then
  $\|a^{p^s}\|\le m+(p-1)s$, by Lemma~\ref{lem:bounded}. Write
  $a^{p^s}=(b_0,b_1,\dots)\in \W(X)$; we therefore have
  $\varpi_i^{m+(p-1)s}b_i=0$ for all $i\ge0$.

  Consider some $j\le i$, and assume that $b_i$ does not belong to
  $\varpi_j^2X^{\otimes i}\otimes\Der X$. Then
  $\varpi_i^{(p-1)j-1}b_i\neq0$, and this can only happen if
  $(p-1)j-1<m+(p-1)s$. In other words, for every $s\in\N$ there exists
  $j\in\N$ such that $b_i\in\varpi_j^2\otimes X^{\otimes i-j}$ for all
  $i\ge j$.

  Consider then the $b_i$ with $i<j$. The degree of a homogeneous
  component in such a $b_i$ is at most $\lambda^j$; on the other hand,
  since it is a homogeneous component of $a^{p^s}$, it has degree at
  least $p^s$. Now the inequalities
  \[p^s\le\lambda^j,\qquad (p-1)j-1<m+(p-1)s\]
  cannot simultaneously be satisfied for arbitrarily large $s$,
  because $\lambda<p$. It follows that, at least for $s$ large enough,
  $b_i=0$ for all $i<j$, and therefore $a^{p^s}\in\varpi_j^2M(X)$.
\end{proof}

To make this text self-contained, we reproduce almost \emph{verbatim}
the following statement by Shestakov and Zelmanov:
\begin{lem}[\cite{shestakov-z:nillie}*{Lemma~5}]\label{lem:sz}
  Assume that $\varpi\subset X$ is nilpotent. Then the associative
  subalgebra of $\End_\Bbbk T(X)$ generated by $\varpi^2\W(X)$ is locally
  nilpotent.
\end{lem}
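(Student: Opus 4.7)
The plan is to equip $T(X)$ with a filtration by powers of the augmentation ideal and to show that elements of $\varpi^2\W(X)$ strictly increase the filtration index; since $\varpi$ is nilpotent, the induced filtration on each $X^{\otimes n}$ terminates after finitely many steps, and local nilpotence follows at once.

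Explicitly, I fix $N$ with $\varpi^N=0$ and set $T(X)_j=\bigoplus_{n\ge 0}\varpi_n^j$. This is a decreasing filtration; since each factor of a basic tensor has augmentation degree at most $N-1$, one has $\varpi_n^{n(N-1)+1}=0$, and consequently $T(X)_j\cap X^{\otimes n}=0$ whenever $j>n(N-1)$.

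The key computation is the filtration estimate $\varpi^2\W(X)\cdot T(X)_j\subseteq T(X)_{j+1}$. For an elementary element $a=z_1\otimes\cdots\otimes z_m\otimes\delta$ of $\W(X)$, the action on a basic tensor $v_1\otimes\cdots\otimes v_n$ vanishes when $n\le m$ and otherwise equals $z_1v_1\otimes\cdots\otimes z_mv_m\otimes\delta(v_{m+1})\otimes v_{m+2}\otimes\cdots\otimes v_n$. The multiplications by the $z_i$ increase the augmentation degree by at least that of $z_1\otimes\cdots\otimes z_m$, while $\delta$ can drop augmentation degree by at most one. A general $a\in\W(X)$ thus satisfies only $a\cdot T(X)_j\subseteq T(X)_{j-1}$; but for $a\in\varpi^2\W(X)$ every coordinate $a_m$ lies in $\varpi_m^2\otimes\Der X$, so the tensor part of each coordinate contributes augmentation degree at least two, improving the inclusion to $a\cdot T(X)_j\subseteq T(X)_{j+1}$.

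Iterating, if $\AA$ denotes the associative subalgebra of $\End_\Bbbk T(X)$ generated by $\varpi^2\W(X)$, then $\AA^M\cdot T(X)_j\subseteq T(X)_{j+M}$, and any $v\in T(X)$, supported in $\bigoplus_{k\le n}X^{\otimes k}$ for some $n$, is annihilated by $\AA^M$ as soon as $M>n(N-1)$. The delicate point of the argument is the book-keeping for the filtration estimate: one must verify that the augmentation-degree contribution of at least two from the tensor part of every coordinate of $a\in\varpi^2\W(X)$ uniformly compensates the single drop caused by $\delta$ acting on position $m+1$, and that the action vanishes on tensors too short to admit such a position.
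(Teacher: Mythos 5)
Your filtration estimate is correct as far as it goes, but it proves a strictly weaker statement than the lemma. What you establish is that the \emph{action} of $\AA$ on $T(X)$ is locally nilpotent: each vector $v$ is annihilated by $\AA^M$ for some $M$ depending on $v$. The lemma asserts that the \emph{algebra} $\AA$ is locally nilpotent, i.e.\ that every finitely generated subalgebra $\langle a_1,\dots,a_r\rangle$ satisfies $\langle a_1,\dots,a_r\rangle^s=0$ for a single $s$, as an identity of operators on all of $T(X)$ at once. Your bound $M>n(N-1)$ grows with the tensor degree $n$; since every element of $\W(X)$ preserves each summand $X^{\otimes n}$, a product of fixed length $M$ is only shown to vanish on the finitely many summands with $n(N-1)<M$, and nothing prevents it from acting non-trivially in higher tensor degrees. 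A telling symptom is that your argument never uses that one starts from \emph{finitely many} elements of $\varpi^2\W(X)$, which is the hypothesis that must carry the weight. The distinction is not cosmetic: in Corollary~\ref{cor:nil} one needs $a^{p^{s+t}}=(a^{p^s})^{p^t}$ to vanish identically for large $t$, i.e.\ the single operator $a^{p^s}$ must be genuinely nilpotent, not merely kill each vector after a vector-dependent number of steps.

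The uniformity you are missing is exactly what the paper's pigeonhole argument provides. Writing the chosen generators as $a_i=x_i'x_i''b_i$ with $x_i',x_i''\in\varpi$ and $(x_i')^d=(x_i'')^d=0$, the commutative subalgebra $A$ generated by the $2r$ elements $x_i',x_i''$ satisfies $A^s=0$ with $s=2r(d-1)+1$, because in any product of $s$ of these commuting elements some one of them occurs at least $d$ times. Expanding $a_{i_1}\cdots a_{i_s}$, each term carries $2s$ coefficients coming from the $x_i',x_i''$, of which at most $s$ have been altered by the derivations $b_k$; the remaining at least $s$ untouched ones multiply to an element of $A^s=0$, so the whole product vanishes. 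This gives a bound $s$ depending only on $r$ and $d$, not on the tensor degree. To rescue your filtration picture you would have to add an argument of this kind, showing that for a fixed finite set of generators the augmentation degree accumulated in the low tensor coordinates cannot be dispersed indefinitely; the filtration alone cannot see this.
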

\begin{proof}
  By assumption $X^d=0$ for some $d\in\N$. Consider a finite
  collection of elements $a_i=x'_ix''_ib_i\in\varpi^2\W(X)$ for
  $i\in\{1,\dots,r\}$, with $x'_i,x''_i\in\varpi$ and $b_i\in
  \W(X)$. Let $A$ be the subalgebra of $\varpi$ generated by
  $x'_1,x''_1,\dots,x'_r,x''_r$. From $(x'_i)^d=(x''_i)^d=0$ follows $A^s=0$
  with $s=2r(d-1)+1$. Now $a_{i_1}\cdots a_{i_s}=\sum y_1\cdots
  y_{2s}d_{j_1}\cdots d_{j_q}$, with $q\le s$ and the $y_j$ obtained
  from the $x'_i,x''_i$ by $(q-s)$-fold application of the derivations
  $b_k$. Therefore $2s-q\ge s$ of the $y_j$ belong to
  $\{x'_1,x''_1,\dots,x'_r,x''_r\}$, so $a_{i_1}\cdots a_{i_s}=0$.
\end{proof}

\begin{cor}\label{cor:nil}
  If $\LL$ is a subalgebra of $\W(X)$ that is generated by bounded,
  $\ell$-evanescent elements for some $\ell\in\N$, then $\LL$ is nil.
\end{cor}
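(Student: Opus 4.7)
My plan is to chain Lemmas~\ref{lem:bounded}, \ref{lem:evanescent}, \ref{lem:evanescent2} and~\ref{lem:sz} together to show that every $a\in\LL$ admits some restricted power $a^{[p^n]}$ that vanishes. The first, essentially bookkeeping, step is to propagate the hypotheses on generators to all of $\LL$: Lemma~\ref{lem:bounded} shows that bounded elements form a restricted Lie subalgebra of $\W(X)$, and Lemma~\ref{lem:evanescent} does the same for $\ell$-evanescent elements, so their intersection is a restricted subalgebra containing all generators of $\LL$, hence containing $\LL$ itself. Fix therefore an arbitrary $a\in\LL$; it is bounded and $\ell$-evanescent.

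Next I would invoke Lemma~\ref{lem:evanescent2} to obtain some $s\ge1$ with $a^{[p^s]}\in\varpi^2\W(X)$, and then Lemma~\ref{lem:sz} (whose hypothesis ``$\varpi$ nilpotent'' is free of charge for $X=\Bbbk[x]/(x^p)$). The latter yields that the cyclic associative subalgebra of $\End_\Bbbk T(X)$ generated by $a^{[p^s]}$ is nilpotent, so $(a^{[p^s]})^N=0$ as an endomorphism of $T(X)$ for some $N\ge1$.

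The last step is to convert this associative vanishing into the vanishing of a restricted power inside the Lie algebra $\LL$. The natural action of $\LL$ on $T(X)$ extends to an action of its restricted enveloping algebra $\U(\LL)$, in which the $p$-mapping coincides with the associative $p$-th power; consequently $a^{[p^{s+k}]}=(a^{[p^s]})^{p^k}$ as endomorphisms of $T(X)$ for every $k\ge0$. Choosing $k$ with $p^k\ge N$ makes $a^{[p^{s+k}]}$ act as zero on $T(X)$, and the standing faithfulness of the natural action forces $a^{[p^{s+k}]}=0$ in $\LL$, proving that $a$ is nil.

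The only delicate point is this final translation from endomorphism-algebra nilpotence to Lie-algebra nillity; it rests on the faithfulness of the natural action and on the fact that inside $\U(\LL)$ the restricted $p$-mapping and the associative $p$-th power agree. Everything else is a direct, almost mechanical, chaining of the four preceding lemmas.
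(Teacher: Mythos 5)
Your proposal is correct and follows exactly the paper's route: propagate boundedness and $\ell$-evanescence from the generators to all of $\LL$ via Lemmata~\ref{lem:bounded} and~\ref{lem:evanescent}, then apply Lemmata~\ref{lem:evanescent2} and~\ref{lem:sz} to conclude that each element is nil. The paper states this chain in one sentence and leaves implicit the final translation you spell out (that the restricted $p$-mapping on $\W(X)\subseteq\End_\Bbbk T(X)$ is the associative $p$-th power, so local nilpotence of the subalgebra generated by $a^{p^s}$ plus faithfulness of the action forces $a^{p^{s+k}}=0$); your filling-in of that step is accurate.
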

\begin{proof}
  It follows from Lemmata~\ref{lem:bounded} and~\ref{lem:evanescent}
  that every element $a\in\LL$ is bounded and $\ell$-evanescent; and
  then from Lemmata~\ref{lem:evanescent2} and~\ref{lem:sz} that $a$ is
  nil.
\end{proof}

\subsection{Growth and contraction}
Let $\AA$ denote an algebra, not necessarily associative. For a
finite dimensional subspace $S\le\AA$, let $S^n$ denote the span in
$\AA$ of $n$-fold products of elements of $S$; if $p|n$ and $\AA$ is a
restricted Lie algebra in characteristic $p$, then $S^n$ also contains
the $p$th powers of elements of $S^{n/p}$. Define
\[\GKdim(\AA,S)=\limsup\frac{\log\dim(S^n)}{\log n},\qquad\GKdim(\AA)=\sup_{S\le\AA}\GKdim(\AA,S),\]
the (upper) \emph{Gelfand-Kirillov dimension} of $\AA$. Note that if
$\AA$ is generated by the finite dimensional subspace $S$, then
$\GKdim(\AA)=\GKdim(\AA,S)$. We give here conditions on a self-similar
Lie algebra $\LL$ so that it has finite Gelfand-Kirillov dimension.

\begin{defn}
  Let $\LL$ be a self-similar Lie algebra, with self-similarity
  structure $\psi:\LL\to\LL\wr\Der X$. It is \emph{contracting} if
  there exists a finite dimensional subspace $N\le\LL$ with the
  following property: for every $a\in\LL$, there exists $n_0\in\N$
  such that, for all $n\ge n_0$, we have $\psi^n(a)\in X^{\otimes
    n}\otimes N\oplus\Der(X^{\otimes n})$.

  The minimal such $N$, if it exists, is called the \emph{nucleus} of
  $\LL$.
\end{defn}
In words, the nucleus is the minimal subspace of $\LL$ such that, for
every $a\in\LL$, if one applies often enough the map $\psi$ to it and
its co\"ordinates (discarding the term in $\Der X$), one obtains only
elements of $N$.

Note that elements of a contracting self-similar algebra are
finite-state. The following test is useful in practice to prove that
an algebra is contracting, and leads to a simple algorithm:
\begin{lem}
  Let the self-similar Lie algebra $\LL$ be generated by the
  finite dimensional subspace $S$, and consider a finite dimensional
  subspace $N\le\LL$. Then $N$ contains the nucleus of $\LL$ if and
  only if there exist $m_0,n_0\in\N$ such that $\psi^m(S+N+[N,S])\le
  X^{\otimes m}\otimes N\oplus\Der(X^{\otimes m})$ and (for restricted
  algebras) $\psi^n((N+S)^p)\le X^{\otimes n}\otimes
  N\oplus\Der(X^{\otimes n})$ for all $m\ge m_0,n\ge n_0$.
\end{lem}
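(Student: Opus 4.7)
I would prove the two implications separately.

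For $(\Rightarrow)$, suppose $N$ contains the nucleus. Then $N$ itself is a valid contracting subspace (any superset of a nucleus remains one), so for every $a\in\LL$ there is a threshold past which $\psi^n(a)\in X^{\otimes n}\otimes N\oplus\Der(X^{\otimes n})$. Applied to a basis of the finite-dimensional subspace $S+N+[N,S]$ and taking the maximum of the thresholds yields, by linearity of $\psi^m$, a uniform $m_0$ that works for the whole subspace and all $m\ge m_0$. The same argument applied to the finite-dimensional subspace $(N+S)^p$ yields $n_0$.

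For $(\Leftarrow)$, define
\[
\LL':=\{a\in\LL:\ \psi^m(a)\in X^{\otimes m}\otimes N\oplus\Der(X^{\otimes m})\ \text{for all sufficiently large}\ m\}.
\]
The hypothesis directly gives $S+N+[N,S]\subseteq\LL'$ and $(N+S)^p\subseteq\LL'$. The goal is $\LL'=\LL$: since $\LL$ is generated by $S$ as a restricted Lie algebra, it suffices to show that $\LL'$ is closed under the Lie bracket and under the $p$-mapping. I would proceed by induction on the length of an iterated bracket (and $p$-power) expression of an element in terms of $S$, reducing at each step to the two cases $[a,s]\in\LL'$ and $a^p\in\LL'$ for $a\in\LL'$ of lesser complexity and $s\in S$.

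The main obstacle is the bracket step. Writing $\psi^m(a)=\sum v_i\otimes a_i+\delta_a$ and $\psi^m(s)=\sum w_j\otimes b_j+\delta_s$ with $a_i,b_j\in N$ for $m$ large, the wreath-product bracket formula yields
\[
\psi^m([a,s])=\sum v_iw_j\otimes[a_i,b_j]+\sum\delta_a(w_j)\otimes b_j-\sum\delta_s(v_i)\otimes a_i+[\delta_a,\delta_s],
\]
whose $\LL$-coordinates lie a priori in $[N,N]+N$ rather than in $N$. The hypothesis $[N,S]\subseteq\LL'$ handles exactly the case $a\in N$: the combinations $\sum v_iw_j\otimes[a_i,b_j]$ arising from $\psi^m([n,s])$ are forced into $X^{\otimes m}\otimes N$. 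For general $a\in\LL'$ one exploits that the hypothesis is stable in $m$ (holding for \emph{all} $m\ge m_0$): iterating $\psi$ further and re-invoking the hypothesis at deeper levels resolves the $[N,N]$-valued coordinates back into $N$-valued ones. The restricted step is analogous: $(N+S)^p\subseteq\LL'$ supplies base cases for $p$-powers, and the Jacobson formula $(a+b)^p=a^p+b^p+\Lambda_p(a,b)$ (for the universal weight-$p$ Lie polynomial $\Lambda_p$) reduces $p$-powers of general $\LL'$-elements to cases already handled by the inductive hypothesis.
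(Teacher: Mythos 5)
Your forward implication is correct and is the same one-line argument as the paper's: any superspace of the nucleus still witnesses contraction, and finite-dimensionality of $S+N+[N,S]$ and of $(N+S)^p$ lets you take a uniform threshold.

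For the converse you follow essentially the same route as the paper (induction on the length of a product of generators, spending one application of $\psi^m$ per bracket; your subalgebra $\LL'$ is just a repackaging of that induction), but the step you yourself call ``the main obstacle'' is left unresolved, and the fix you sketch does not work. Write $\widehat\psi^m(V)$ for the span of the $\LL$-co\"ordinates of $\psi^m(V)$, as in the paper. The hypothesis controls $\widehat\psi^m(N)$, $\widehat\psi^m(S)$ and $\widehat\psi^m([N,S])$, but says nothing about $\widehat\psi^m([N,N])$ --- and that is exactly where the co\"ordinates of $\psi^m([a,s])$ land once the co\"ordinates of both $\psi^m(a)$ and $\psi^m(s)$ have been contracted into $N$. ``Iterating $\psi$ further and re-invoking the hypothesis at deeper levels'' only yields $\widehat\psi^m([N,N])\subseteq[\widehat\psi^m(N),\widehat\psi^m(N)]+\widehat\psi^m(N)\subseteq[N,N]+N$, i.e.\ you return to the same space and never fall into $N$. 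What closes the induction is control of $[N,N]$: with the hypothesis strengthened to $\psi^m(S+N+[N,N+S])\le X^{\otimes m}\otimes N\oplus\Der(X^{\otimes m})$, the co\"ordinates of $\psi^m(a)$ for $a$ a length-$\ell$ product lie in $[N,N]+N$ by induction, one further application of $\psi^m$ pushes them into $N$, and the $p$-power step then goes through via Jacobson's formula as you indicate. For what it is worth, the paper's own proof of this direction consists of the single sentence ``applying $\ell-1$ times the map $\psi^m$'' and elides exactly the same point; so you have correctly located the genuine difficulty, but your proposal supplies neither the missing argument for $\widehat\psi^m([N,N])\subseteq N$ nor the strengthened hypothesis that would make it unnecessary.
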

\begin{proof}
  Note first that, if $N$ contains the nucleus $N_0$, then
  co\"ordinates of $\psi^m([N+S,S])$ and $\psi^n((N+S)^p)$ will be
  contained in $N_0$, so \emph{a fortiori} in $N$ for all sufficiently
  large $m,n$.

  Conversely, consider $a\in\LL$, say a product of $\ell$ elements of
  $S$. Then applying $\ell-1$ times the map $\psi^m$, we get
  $\psi^{m(\ell-1)}(a)\in X^{\otimes m(\ell-1)}\otimes
  N+\Der(X^{\otimes m(\ell-1)})$, so $N$ contains the nucleus.
\end{proof}

\begin{algorithm}[H]
\DontPrintSemicolon
\KwData{A generating set $S$}
\KwResult{The nucleus $N$}
\SetAlgoLined
$N\leftarrow0$ \;
\Repeat{$N=N'$}{$N'\leftarrow N$ \;
  $B\leftarrow N+S+[N,S]$ \;
  $N\leftarrow\bigcap_{i\ge0}\sum_{j\ge i}\widehat\psi^j(B)$ \;}
\end{algorithm}

\begin{defn}
  Let $\LL$ be a self-similar algebra; assume that the alphabet $X$
  admits an augmentation $\varepsilon:X\to\Bbbk$, and denote by $\pi$
  the projection $\LL\to\Der X$. We say $\LL$ is \emph{recurrent} if
  the $\Bbbk$-linear map $(\varepsilon\otimes1)\psi: \ker(\pi)\to\LL$
  is onto.
\end{defn}

\begin{lem}
  Let $\LL$ be a finitely generated, contracting and recurrent
  self-similar Lie algebra. Then $\LL$ is generated by its nucleus.
\end{lem}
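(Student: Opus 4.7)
The plan is to prove by induction on a contraction depth that every element of $\LL$ lies in the subalgebra $M := \langle N \rangle$ generated by the nucleus, using the recurrent hypothesis to supply the preimages needed at each inductive step.

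For $a \in \LL$, define the depth $d(a)$ to be the least nonnegative integer $n$ with $\psi^m(a) \in X^{\otimes m} \otimes N \oplus \Der(X^{\otimes m})$ for every $m \ge n$; by the contracting hypothesis $d(a) < \infty$. I proceed by induction on $d(a)$. The base case $d(a) = 0$ gives $a \in N \subseteq M$ at once. For the inductive step, given $a$ with $d(a) = n \ge 1$, write $\psi(a) = \sum_i x_i \otimes b_i + \delta$ and observe that the level-$m$ states of $a$ are precisely the level-$(m-1)$ states of the various $b_i$, so $d(b_i) \le n - 1$, whence $b_i \in M$ by the inductive hypothesis.

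The remaining task is to upgrade ``$b_i \in M$ for all $i$'' to ``$a \in M$''. Here the recurrent hypothesis enters: surjectivity of $(\varepsilon \otimes 1)\psi \colon \ker \pi \to \LL$ supplies, for each $b_i$, a preimage $\tilde b_i \in \ker \pi$, and combining these with the wreath-product structure of $\LL \wr \Der X$ I assemble a candidate $a' \in \LL$ with $\psi(a')$ matching $\sum_i x_i \otimes b_i$ on the $X \otimes \LL$ component. The residual $a - a'$ then satisfies $\psi(a - a') \in \Der X$; since the action of $\LL$ on $T(X)$ is faithful and depends on an element only through its $\psi$-image, the residual is determined by a single element of $\Der X$, and finite-dimensionality of $\Der X$ together with recurrence (used to produce inside $M$ an element of each $\pi$-type occurring in $\LL$) completes the step.

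The main obstacle is that the recurrent hypothesis supplies preimages in $\LL$, not a priori in $M$. I would address this by bootstrapping recurrence to $M$: namely, verifying that the restricted map $(\varepsilon \otimes 1)\psi \colon \ker \pi \cap M \to M$ is itself onto. This first requires showing that $M$ is self-similar, i.e.\ $\psi(M) \subseteq X \otimes M \oplus \Der X$, which follows after enlarging the nucleus (if necessary) to a $\psi$-closed finite-dimensional subspace containing it --- an operation allowed because the states of nucleus elements eventually lie back in the nucleus by contracting, and detected by the algorithm of the preceding lemma. Once $M$ is known to be both self-similar and recurrent, the lifts $\tilde b_i$ may be chosen inside $M$, the induction closes, and $\LL = M$ as claimed.
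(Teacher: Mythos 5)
Your inductive step does not close, and the place where it breaks is exactly the ``upgrade'' from $b_i\in M$ to $a\in M$. Recurrence is the statement that $(\varepsilon\otimes1)\psi\colon\ker\pi\to\LL$ is \emph{onto}: it lets you realize any prescribed element of $\LL$ as the augmentation-weighted sum $\sum_i\varepsilon(x_i)b_i$ of the first-level coordinates of \emph{some} element of $\ker\pi$. It does not let you prescribe the individual coordinates, so there is no way to ``assemble a candidate $a'\in\LL$ with $\psi(a')$ matching $\sum_i x_i\otimes b_i$ on the $X\otimes\LL$ component'': $\psi(\LL)$ is in general a very thin subalgebra of $X\otimes\LL\rtimes\Der X$ (this is the whole point of the Hausdorff-dimension discussion in the paper), and an element of $\LL$ with that exact image need not exist. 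What your induction actually requires is the implication $\psi(a)\in X\otimes M\oplus\Der X\Rightarrow a\in M$, i.e.\ that $M$ is saturated under $\psi^{-1}$; none of the hypotheses gives this directly, and your closing paragraph about bootstrapping recurrence down to $M$ presupposes essentially the conclusion rather than proving it.

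The information has to flow in the opposite direction. The paper's proof fixes a finite-dimensional generating subspace $S$ and uses finite generation to get a \emph{uniform} $n$ with $\psi^n(S)\le X^{\otimes n}\otimes N\oplus\Der(X^{\otimes n})$; since level-$n$ coordinates of brackets are built from brackets and derivative terms of level-$n$ coordinates, every $a\in\LL$ then satisfies $\psi^n(a)\in\mathscr M\wr\Der(X^{\otimes n})$ for $\mathscr M=\langle N\rangle$. Recurrence, iterated $n$ times, exhibits every $c\in\LL$ as $(\varepsilon^{\otimes n}\otimes1)\psi^n(a)$ for a suitable $a$, and this expression visibly lies in $\mathscr M$. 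So one passes from deep coordinates (known to lie in $\mathscr M$) back up to arbitrary elements via surjectivity of coordinate extraction --- never from an element's own coordinates back to the element, which is the step your argument cannot supply.
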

\begin{proof}
  Let $S$ be a generating finite dimensional subspace, and let $N$
  denote the nucleus of $\LL$. Because $S$ is finite dimensional,
  there exists $n\in\N$ such that $\psi^n(S)\le X^{\otimes n}\otimes
  N\oplus\Der(X^{\otimes n})$. Let $\mathscr M$ denote the subalgebra
  generated by $N$; then, for every $a\in\LL$, we have
  $\psi^n(a)\in\mathscr M\wr\Der(X^{\otimes n})$; so, because $\LL$ is
  recurrent, $\mathscr M=\LL$.
\end{proof}

\begin{lem}\label{lem:sum}
  Let $\LL$ be a contracting, finitely generated self-similar Lie
  algebra, with $X$ finite dimensional. Then there exists a finite
  dimensional generating subspace $N$ of $\LL$ such that
  \[\LL\le\sum_{n\ge0}X^{\otimes n}\otimes N.\]
\end{lem}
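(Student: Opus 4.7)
My plan is to construct $N$ explicitly from a finite generating set together with the nucleus, and then to show $\LL\subseteq V:=\sum_{m\ge 0}X^{\otimes m}\otimes N$ inside $\W(X)$ via a closure argument. I fix a finite-dimensional generating subspace $S\le\LL$ and let $N_0$ denote the nucleus. Since $\LL$ is contracting and $S$ is finite-dimensional, there exists $n_0\in\N$ with $\psi^{n_0}(S)\subseteq X^{\otimes n_0}\otimes N_0\oplus\Der(X^{\otimes n_0})$; I then set
\[N:=N_0+\sum_{k=0}^{n_0}\widehat\psi^k(S).\]
Each $\widehat\psi^k(S)$ is finite-dimensional because $\widehat\psi$ preserves this property, and the sequence stabilizes inside $N_0$ by contraction, so $N$ is finite-dimensional. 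By construction $N\supseteq S$, so $N$ generates $\LL$, and $N$ is $\widehat\psi$-invariant: $\psi(N)\subseteq X\otimes N\oplus\Der X$.

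Next I verify $\LL\subseteq V$. The $\widehat\psi$-invariance of $N$ propagates to $V$: indeed $\psi(X^{\otimes m}\otimes N)\subseteq X\otimes X^{\otimes m-1}\otimes N\subseteq X\otimes V$ for $m\ge 1$, while $\psi(N)\subseteq X\otimes N\oplus\Der X\subseteq X\otimes V\oplus\Der X$ for $m=0$; hence $\psi(V)\subseteq X\otimes V\oplus\Der X$. Since $S\subseteq N\subseteq V$ and $S$ generates $\LL$, it suffices to check that $V$ is closed under the Lie bracket, and (in the restricted case) under the $p$-mapping, of $\W(X)$. For the bracket, I compute $[x_1\otimes\cdots\otimes x_m\otimes b,\ y_1\otimes\cdots\otimes y_{m'}\otimes b']$ with $b,b'\in N$ using the explicit wreath-product formulas of~\S\ref{ss:full}: the diagonal contributions lie in $X^{\otimes\max(m,m')}\otimes N$ by $\widehat\psi$-invariance, while the mixing contributions of the form $X^{\otimes r}\otimes[N,N]$ are handled by a second appeal to contraction. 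Namely, $[N,N]\subseteq \LL$ is itself finite-dimensional, so its $\psi$-iterates land after finitely many steps in $X^{\otimes\ell}\otimes N_0\oplus\Der(X^{\otimes\ell})$, placing the cross-terms inside $X^{\otimes r+\ell}\otimes N_0\subseteq V$. The restricted case is analogous: for $b\in N$ with $\psi(b)=\sum x_i\otimes b_i+\delta$, the formula $\psi(b^p)=\sum x_i^p\otimes b_i^p+\delta^p+(\text{weight-}p\text{ commutators})$ has its first two summands in $X\otimes N\oplus\Der X$ by $\widehat\psi$-invariance, and the commutator corrections are absorbed into $V$ exactly as in the bracket case.

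The main obstacle is precisely the closure of $V$ under brackets, in particular verifying that the mixing terms $X^{\otimes r}\otimes[N,N]$ stay inside $V$; the key leverage is to use contraction \emph{twice}, once to reduce $S$ into the nucleus (building $N$) and once more to reduce the finite-dimensional subspace $[N,N]$ into the nucleus-regime, so that the infinite-tail contributions of shifted $N$-elements under the bracket formula can all be absorbed into shifts of $N_0\subseteq N$.
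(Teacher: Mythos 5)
Your construction of $N$ coincides with the paper's, but your verification that $\LL\le V:=\sum_{n\ge0}X^{\otimes n}\otimes N$ takes a different route (closure of $V$ under brackets rather than a direct decomposition of each element), and it has a genuine gap at its key step. When you absorb the cross-terms $X^{\otimes r}\otimes[N,N]$ into $V$ ``by a second appeal to contraction'', what contraction actually gives for $c\in[N,N]$ is $\psi^\ell(c)\in X^{\otimes\ell}\otimes N_0\oplus\Der(X^{\otimes\ell})$, i.e.\ $c=\sum v\otimes n_v+(\text{coordinates of }c\text{ at depths}<\ell)$. You keep only the first summand and silently drop the second: the shallow components live in $\bigoplus_{j<\ell}X^{\otimes j}\otimes\Der X$, are \emph{not} of the form $v\otimes n$ with $n\in N_0$, and nothing in your argument places them in $V$. (The same issue already arises for your ``diagonal'' terms, since $[v\otimes b,w\otimes b']=vw\otimes[b,b']$ lands in $X^{\otimes m}\otimes[N,N]$, not in $X^{\otimes m}\otimes N$.) The inclusion can genuinely fail without further care: the depth-$0$ derivation component of any element of $V$ can only come from $N$ itself, so if some $a\in\LL$ has $\pi(a)\in\Der X$ outside $\pi(N)$, then $a\notin V$ no matter how well $V$ is closed under brackets.

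This is exactly the point the paper's proof addresses and yours omits: the paper first \emph{enlarges} $S$ (hence $N$) so that $\pi(N)=\pi(\LL)$, then observes that the nucleus property gives $\LL\le\sum_{n}X^{\otimes n}\otimes N\oplus\Der(X^{\otimes n})$ directly for every element (no bracket computation needed), and finally eliminates the residual $\Der(X^{\otimes n})$ terms by subtracting elements $w\otimes n_w$ with $n_w\in N$ and $\pi(n_w)$ matching the derivation component, pushing the error to greater depth. To repair your argument you need both ingredients: enlarge $N$ so that $\pi(N)=\pi(\LL)$, and supply the elimination step showing that any element whose deep states lie in $N_0$ actually belongs to $V$ --- at which point the bracket-closure detour becomes unnecessary. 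A smaller point: your assertion that $N$ is $\widehat\psi$-invariant relies on $\widehat\psi(N_0)\subseteq N_0$, which you should justify from the minimality of the nucleus rather than assert.
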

\begin{proof}
  Let $N_0$ be the nucleus of $\LL$, and let $S$ generate
  $\LL$. Enlarge $S$, keeping it finite dimensional, so that
  $\pi(N)=\pi(\LL)\le\Der X$. Set finally
  $N=N_0+\sum_{n\ge0}\widehat\psi(S)$.

  By the definition of nucleus, $\LL$ is contained in
  $\sum_{n\ge0}X^{\otimes n}\otimes N\oplus\Der(X^{\otimes n})$. Now
  using the fact that $\pi(N)=\pi(\LL)$, we can eliminate the
  $\Der(X^{\otimes n})$ terms while still staying in $X^{\otimes
    n}\otimes N$.
\end{proof}

\begin{lem}
  Let $\LL$ be a contracting, $\R_+$-graded self-similar Lie algebra,
  with dilation $\lambda$, and let $N$ be as in
  Lemma~\ref{lem:sum}. Consider a homogeneous $a\in\LL$ with
  $\deg(a)=d$. Then $a\in\sum_{j=0}^nX^{\otimes j}\otimes N$ with
  \begin{equation}\label{eq:contrub}
    n\ge\log(d/m)/\log|\lambda|\text{ where }m=\max_{n\in N}\deg(n).
  \end{equation}

  If furthermore $\lambda>1$ and $\LL$ is generated by finitely many
  positive-degree elements, then there exists $\epsilon>0$ such
  that every $a\in\LL$ satisfies
  \begin{equation}\label{eq:contrlb}
    n\le \log(d/\epsilon)/\log\lambda.
  \end{equation}
\end{lem}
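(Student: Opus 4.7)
The plan is to deduce both inequalities from the explicit grading formula
\[\deg(x_1\otimes\cdots\otimes x_j\otimes\nu)=\sum_{i=1}^j\deg(x_i)\lambda^{i-1}+\lambda^j\deg(\nu)\]
for a homogeneous elementary tensor in $X^{\otimes j}\otimes N$, applied to a decomposition of $a$ by depth.

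First, for~\eqref{eq:contrub} I would write $a=\sum_{j=0}^n a_j$ with $a_j\in X^{\otimes j}\otimes N$ homogeneous of degree $d$. Since $X$ is non-positively graded, $\sum_i\deg(x_i)\lambda^{i-1}\le 0$, and by definition of $m$ one has $\deg(\nu)\le m$; hence every elementary tensor in $a_j$ has degree at most $\lambda^jm$. A nonzero $a_j$ of degree $d$ therefore forces $\lambda^jm\ge d$, i.e.\ $j\ge\log(d/m)/\log|\lambda|$, which yields the bound.

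Next, for~\eqref{eq:contrlb} I would exploit positive-degree generation. Since $\psi$ preserves the grading and $X$ is non-positively graded, $\widehat\psi$ sends a homogeneous element of positive degree $d$ into elements of positive degree in $[d/\lambda,(d+q)/\lambda]$, where $-q$ denotes the minimum degree appearing in $X$. Consequently the space $N$ produced by Lemma~\ref{lem:sum} has a strictly positive minimum degree $\epsilon_0$. Combined with the lower bound $\sum_i\deg(x_i)\lambda^{i-1}\ge-q(\lambda^j-1)/(\lambda-1)$, a nonzero depth-$j$ elementary tensor of degree $d$ satisfies
\[d\ge\lambda^j\bigl(\deg(\nu)-q/(\lambda-1)\bigr)+q/(\lambda-1),\]
which for $\deg(\nu)>q/(\lambda-1)$ gives $\lambda^j\le d/\bigl(\deg(\nu)-q/(\lambda-1)\bigr)$, the estimate sought.

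The hard part will be to find a single uniform $\epsilon>0$ that works for all $a$. Contributions from $\nu\in N$ with $\deg(\nu)\le q/(\lambda-1)$ force $d\le q/(\lambda-1)$ (a constant), and small-$d$ homogeneous elements of $\LL$ occupy only finitely many graded pieces, which are themselves finite dimensional by contraction plus finite generation. These exceptional cases therefore admit a uniform depth bound $J_0$, and choosing $\epsilon$ to be the minimum of $\deg(\nu)-q/(\lambda-1)$ over $\nu\in N$ with $\deg(\nu)>q/(\lambda-1)$, further reduced so that $\log(d/\epsilon)/\log\lambda\ge J_0$ for the smallest positive degree of $\LL$, produces the required constant.
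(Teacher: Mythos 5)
Your proof of~\eqref{eq:contrub} is correct and coincides with the paper's. For~\eqref{eq:contrlb}, the estimate $d\ge\lambda^j\bigl(\deg(\nu)-q/(\lambda-1)\bigr)+q/(\lambda-1)$ and the conclusion in the case $\deg(\nu)>q/(\lambda-1)$ are fine, but the disposal of the remaining leaves contains a genuine error: you claim that a contribution from $\nu\in N$ with $\deg(\nu)\le q/(\lambda-1)$ ``forces $d\le q/(\lambda-1)$''. It does not. In that case your displayed lower bound on $d$ is itself at most $q/(\lambda-1)$, i.e.\ it is vacuous, and there is no matching upper bound: a depth-$j$ elementary tensor $f\otimes\nu$ can have any degree up to $\lambda^j\deg(\nu)$. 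This case is not hypothetical. In the Gupta--Sidki algebra $\LLgs$ one has $q=2$ and $\lambda=1+\sqrt2$, so $q/(\lambda-1)=\sqrt2$, while the generator $a$ lies in the nucleus and has degree $1<\sqrt2$; for every even $j\ge4$ the tensor $x^{e_1}\otimes\cdots\otimes x^{e_j}\otimes a$ with $e_4=e_6=\cdots=e_j=2$ and all other $e_i=0$ is homogeneous of degree $\lambda^2=3+2\sqrt2$ and of depth $j$. Thus tensors over low-degree nucleus elements can have a fixed, moderately large degree and unbounded depth, your reduction of the exceptional case to the finitely many graded pieces of degree $\le q/(\lambda-1)$ collapses, and the uniform bound $J_0$ is not obtained.

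The missing idea --- and the way the paper proceeds --- is to change the leaf space before estimating: decompose $a$ only down to depth $n-m$, with leaves taken in $X^{\otimes m}\otimes N\cap\LL$ for a suitably large fixed $m$. These leaves are genuine homogeneous elements of $\LL$, and because $\LL$ is generated by finitely many positive-degree elements (so that, as you correctly observe, its low-degree part is finite dimensional and hence of uniformly bounded depth), $m$ can be chosen so that every such leaf has degree strictly greater than $q/(\lambda-1)$. Your telescoping computation then applies verbatim to this modified decomposition and produces a uniform $\epsilon$. In short, the finite-dimensionality of the low-degree graded pieces must be used to push the minimum \emph{leaf} degree above the threshold $q/(\lambda-1)$ before decomposing, not to excuse the leaves that fall below it afterwards.
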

\begin{proof}
  Consider $a\in X^{\otimes n}\otimes N\cap\LL$. Then
  $\deg(a)\le\lambda^n\max\deg(N)$, and this proves~\eqref{eq:contrub}.

  Now, if $\LL$ is generated by finitely many positive-degree
  elements, then there exists $m\in\N$ such that all elements of
  $X^{\otimes m}\otimes N\cap\LL$ have degree $> -\min_{x\in
    X}\deg(x)/(\lambda-1)$. Let then $\epsilon>0$ be such that all
  these elements have degree $\ge B:=\lambda^m\epsilon-\min_{x\in
    X}\deg(x)/(\lambda-1)$.

  We return to our $a$, which we write as $a=\sum f\otimes b$, with
  $f\in X^{\otimes m-n}$ and $b\in X^{\otimes m}\cap\LL$. Then every
  homogeneous summand of $b$ has degree at least $B$, while every
  homogeneous summand of $f$ has degree at least $\min_{x\in
    X}\deg(x)(1+\lambda+\dots+\lambda^{m-n-1}$; so every homogeneous
  summand of $a$ has degree at least $\epsilon\lambda^n$.
\end{proof}

\begin{prop}\label{prop:GKupper}
  Let $\LL$ be a contracting, $\R_+$-graded self-similar Lie algebra,
  with dilation $\lambda>1$, that is generated by finitely many
  positive-degree elements.

  Then $\LL$ has finite Gelfand-Kirillov dimension; more precisely,
  \[\GKdim(\LL)\le\frac{\log(\dim X)}{\log\lambda}.\]
\end{prop}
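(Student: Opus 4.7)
The plan is to bound $\dim S^k$ polynomially in $k$ by combining degree estimates on $S^k$ with the contraction bound~\eqref{eq:contrlb} from the previous lemma. Since $\LL$ is finitely generated, it suffices to compute $\GKdim(\LL,S)$ for a single finite-dimensional homogeneous generating subspace $S$; choose $S$ so that every $s\in S$ is homogeneous of positive degree, and set $D=\max_{s\in S}\deg(s)>0$.

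Next I would observe that elements of $S^k$ are sums of products of $k$ generators (and, in the restricted case, $p$-th powers of elements of $S^{k/p}$). Both operations multiply degrees in the expected way: a product of $k$ generators has degree at most $kD$, and a $p$-th power of an element of degree $\le (k/p)D$ has degree $\le kD$. Hence every homogeneous summand of any element of $S^k$ has degree at most $kD$, so
\[
S^k\subseteq\bigoplus_{d\le kD}\LL_d.
\]

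The heart of the argument is to apply~\eqref{eq:contrlb} to each homogeneous summand. Let $N$ and $\epsilon>0$ be as in the previous lemma. For a homogeneous $a\in\LL$ of degree $d\le kD$, we have $a\in\sum_{j=0}^{n_d}X^{\otimes j}\otimes N$ with $n_d\le\log(d/\epsilon)/\log\lambda$; since $n_d$ is monotone in $d$, every $a\in S^k$ lies in $\sum_{j=0}^{n_k}X^{\otimes j}\otimes N$ with $n_k=\lceil\log(kD/\epsilon)/\log\lambda\rceil$. Therefore
\[
\dim S^k\le\dim(N)\sum_{j=0}^{n_k}(\dim X)^j\le C_1(\dim X)^{n_k}\le C_2\cdot k^{\log(\dim X)/\log\lambda}
\]
for constants $C_1,C_2$ independent of $k$. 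Taking logarithms and dividing by $\log k$ gives $\GKdim(\LL,S)\le\log(\dim X)/\log\lambda$, which is the desired bound.

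I do not expect serious obstacles: the lemma preceding the proposition does essentially all the work, and the only real step is the elementary passage from a bound on the ``depth'' $n_k$ of a linear combination to a bound on the number of independent monomials, which costs a factor $(\dim X)^{n_k}$. The one mild subtlety is the restricted case, where $S^k$ also contains $p$-th powers: one needs to note that the $p$-mapping is compatible with the $\R_+$-grading (it multiplies degrees by $p$), so these $p$-th powers still fall into degrees $\le kD$ and the same bound applies. If $\dim X=1$ the right-hand side is $0$, and the argument then gives $\dim S^k=O(\log k)$, consistent with $\GKdim(\LL)=0$.
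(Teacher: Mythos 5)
Your proof is correct and follows essentially the same route as the paper: both arguments bound the degree of elements of $S^k$ (resp.\ $\LL_d$), invoke~\eqref{eq:contrlb} to bound the depth $n$ at which such elements sit inside $\sum_j X^{\otimes j}\otimes N$, and then count $\sum_{j\le n}(\dim X)^j\dim N\precapprox d^{\log(\dim X)/\log\lambda}$. You merely make explicit two steps the paper leaves implicit, namely the passage from $S^k$ to the span of homogeneous elements of degree $\le kD$ and the compatibility of the $p$-mapping with the grading.
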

\begin{proof}
  Let $\LL_d$ denote the span of homogeneous elements in $\LL$ of
  degree $\le d$. Consider $a\in\LL_d$, and, up to expressing $a$ as a
  sum, assume $a=f\otimes b$ with $f\in X^{\otimes n}$ and $b\in
  N$. By~\eqref{eq:contrlb} we have $n\le\log_\lambda(d/\epsilon)$, so
  \[\dim\LL_d\le\sum_{j=0}^n(\dim X)^j\dim N\precapprox d^{\log(\dim X)/\log\lambda}.\qedhere\]
\end{proof}

\section{(Weakly) branched Lie algebras}\label{ss:wbr}
We will concentrate here on some conditions on a self-similar Lie
algebra that have consequences on its algebraic structure, and in
particular on its possible quotients. We impose, in this~\S,
restrictions that will be satisfied by all our examples: the alphabet
$X$ has the form $\Bbbk[x]/(x^d)$, and the image of $\LL$ in $\Der X$
is precisely $\Bbbk\partial_x$. Let $\theta=x^{d-1}$ denote the
top-degree element of $X$.

We consider self-similar Lie algebras $\LL$ with self-similarity
structure $\psi:\LL\to\LL\wr\Bbbk\partial_x$. Let
$\pi:\LL\to\Bbbk\partial_x$ denote the natural projection. We recall
that $\LL$ acts on $X^{\otimes n}$ for all $n$. We denote by $\U(\LL)$
the universal enveloping algebra of $\LL$; then $\U(\LL)$ also acts on
$X^{\otimes n}$, and acts on $\LL$ by derivations. If $u=u_1\cdots
u_n\in\U(\LL)$ and $a\in\LL$, we write
$[[u,a]]=[u_1,[u_2,\dots,[u_n,a]\cdots]]$ for this action.

We also identify $\LL$ with $\psi(\LL)$; so as to write, e.g.,
`$\theta\otimes a\in\LL$' when we mean `$\theta\otimes
a\in\psi(\LL)$'.

\begin{defn}
  The self-similar algebra $\LL$ is \emph{transitive} if for all $v\in
  X^{\otimes n}$ there exists $u\in\U(\LL)$ with
  $u\cdot(\theta^{\otimes n})=v$.
\end{defn}
We note immediately that, if $\LL$ is transitive, then it is
infinite-dimensional.

\begin{lem}\label{lem:transitive}
  If $\U(\LL)\cdot\theta=X$ and $\LL$ is recurrent, then $\LL$ is
  transitive.
\end{lem}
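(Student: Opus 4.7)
I would induct on $n$, the base case $n=1$ being the hypothesis. Assume $\U(\LL)\cdot\theta^{\otimes n}=X^{\otimes n}$; set $W:=\U(\LL)\cdot\theta^{\otimes n+1}$ and show $W=X^{\otimes n+1}$ in two stages: first get $\theta\otimes X^{\otimes n}\subseteq W$ via recurrence, then extend to $x^k\otimes X^{\otimes n}$ for all $k$ via level-$1$ transitivity.

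For the first stage, observe that $\theta\,y=\varepsilon(y)\theta$ in $X$, since $x^{d-1}\varpi=0$. Therefore any $a\in\ker\pi$, written $\psi(a)=\sum y_i\otimes a_i$, acts on $\theta\otimes w\in X\otimes X^{\otimes n}$ by
\[a\cdot(\theta\otimes w)=\theta\otimes(a'\cdot w),\quad\text{where }a':=(\varepsilon\otimes1)\psi(a)=\sum\varepsilon(y_i)a_i.\]
Recurrence ensures every $a'\in\LL$ arises this way; the identity is multiplicative in $a$, so lifting each factor of a monomial $u'=a'_1\cdots a'_\ell\in\U(\LL)$ to $a_j\in\ker\pi$ produces $u:=a_1\cdots a_\ell\in\U(\LL)$ with $u\cdot(\theta\otimes w)=\theta\otimes(u'\cdot w)$. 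Extending linearly and taking $w=\theta^{\otimes n}$, the outer inductive hypothesis lets $u'\cdot\theta^{\otimes n}$ range over $X^{\otimes n}$, so $\theta\otimes X^{\otimes n}\subseteq W$.

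For the second stage, I would downward-induct on $k\in\{d-1,\dots,0\}$ to show $x^k\otimes X^{\otimes n}\subseteq W$; the base case $k=d-1$ is the first stage. For the inductive step, fix $a\in\LL$ with $\pi(a)=\partial_x$ (which exists since $\pi(\LL)=\Bbbk\partial_x$), so $\psi(a)=\sum y_i\otimes a_i+\partial_x$. For $w\in X^{\otimes n}$, the inner inductive hypothesis gives $x^{k+1}\otimes w\in W$, hence
\[a\cdot(x^{k+1}\otimes w)=\sum x^{k+1}y_i\otimes(a_i\cdot w)+(k+1)x^k\otimes w\in W;\]
each $x^{k+1}y_i$ is supported in degrees $\ge k+1$, so the first sum lies in $\sum_{j>k}x^j\otimes X^{\otimes n}\subseteq W$, whence $(k+1)x^k\otimes w\in W$. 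The main obstacle is then to extract $x^k\otimes w$ itself, i.e.\ to invert $k+1$ in $\Bbbk$; this is exactly what the level-$1$ hypothesis supplies, for $\U(\LL)\cdot\theta=X$ forces $\U(\Bbbk\partial_x)\cdot\theta=X$, which in the domain $\Bbbk$ is equivalent to $1,2,\dots,d-1$ being units (they are the factors appearing in the scalars $(d-1)(d-2)\cdots(d-j)$ by which $\partial_x^j$ multiplies $x^{d-1-j}$ in $\partial_x^j\theta$). The induction closes, yielding $W=X^{\otimes n+1}$.
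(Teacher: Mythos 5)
Your proof is correct and follows essentially the same route as the paper's: induction on $n$, using recurrence (via the identity $\theta y=\varepsilon(y)\theta$) to obtain $\theta\otimes X^{\otimes n}$, then a downward induction on the power of $x$ driven by an element of $\LL$ projecting to $\partial_x$. The only difference is that you make explicit the invertibility of the scalars $k+1$ extracted from the hypothesis $\U(\LL)\cdot\theta=X$, a point the paper's proof leaves implicit.
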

\begin{proof}
  We proceed by induction on $n$, the case $n=0$ being obvious. Write
  $M=\U(\LL)\cdot\theta^{\otimes n+1}$. Because $\LL$ is recurrent and
  by the inductive hypothesis, $\U(\ker\pi)\cdot\theta^{\otimes
    n+1}=\theta\otimes X^{\otimes n}$. Then, by hypothesis, there
  exists in $\LL$ an element $a$ of the form $\partial_x+\sum
  x_i\otimes a_i$; so $a\cdot\theta^{\otimes n+1}\in
  x^{d-2}\otimes\theta^{\otimes n}+\theta\otimes X^{\otimes
    n}$. Because $M$ contains $\theta\otimes X^{\otimes n}$, it also
  contains $x^{d-2}\otimes\theta^{\otimes n}$, and again acting with
  $\U(\ker\pi)$ it contains $x^{d-2}\otimes X^{\otimes n}$. Continuing
  in this manner, it contains $x^i\otimes X^{\otimes n}$ for all $i$,
  and therefore equals $X^{\otimes n+1}$.
\end{proof}

\begin{defn}
  Let $\LL$ be a recurrent, transitive, self-similar Lie algebra. It
  is
  \begin{description}
  \item[weakly branched] if for every $n\in\N$ there exists a non-zero
    $a\in \W(X)$ such that $\theta^{\otimes n}\otimes a\in\LL$;
  \item[branched] if for every $n\in\N$ the ideal $K_n$ generated by
    all $\theta^{\otimes n}\otimes a\in\LL$ has finite codimension in
    $\LL$;
  \item[regularly weakly branched] if there exists a non-trivial ideal
    $\KK\triangleleft\LL$ such that $X\otimes\KK\le\psi(\KK)$;
  \item[regularly branched] if furthermore there exists such a $\KK$
    of finite codimension in $\LL$.
  \end{description}

  In the last two cases, we say that $\LL$ is regularly [weakly]
  branched \emph{over} $\KK$.
\end{defn}

\noindent The following immediately follows from the definitions:
\begin{lem}
  ``Regularly branched'' implies ``Regularly weakly branched'' and
  ``branched'', and each of these implies ``weakly branched''.\qedhere
\end{lem}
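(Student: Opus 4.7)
The lemma collects four implications: three are essentially immediate from the fact that transitivity forces $\LL$ to be infinite dimensional, while the fourth requires a short shifting argument.

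For \emph{regularly branched} $\Rightarrow$ \emph{regularly weakly branched}, an ideal $\KK$ of finite codimension in the infinite-dimensional $\LL$ is automatically non-zero, and the relation $X\otimes\KK\le\psi(\KK)$ is unchanged. For \emph{regularly weakly branched} $\Rightarrow$ \emph{weakly branched}, I iterate $X\otimes\KK\le\psi(\KK)$ to obtain $X^{\otimes n}\otimes\KK\le\psi^n(\KK)\subseteq\psi^n(\LL)$; any $0\neq a\in\KK$ then witnesses a non-zero $\theta^{\otimes n}\otimes a\in\LL$. For \emph{branched} $\Rightarrow$ \emph{weakly branched}, $K_n$ has finite codimension in the infinite-dimensional $\LL$ hence is non-zero, so its generating set contains some $b\neq 0$; by faithfulness of $\psi^n$, the corresponding $a$ with $\psi^n(b)=\theta^{\otimes n}\otimes a$ is non-zero.

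The substantive implication is \emph{regularly branched} $\Rightarrow$ \emph{branched}. Iterating $X\otimes\KK\le\psi(\KK)$ produces $X^{\otimes n}\otimes\KK\le\psi^n(\KK)$, so each $\theta^{\otimes n}\otimes a$ with $a\in\KK$ lies in $\psi^n(\LL)$ and contributes a generator of $K_n$. I consider the subspace $J_n:=\psi^{-n}(X^{\otimes n}\otimes\KK)\subseteq\LL$: the $\W(X^{\otimes n})$-bracket formulas of~\S\ref{ss:full} show that both the tail--tail product $\sum w_iv\otimes[c_i,a]$ and the derivation action $\delta(v)\otimes a$ preserve membership in $X^{\otimes n}\otimes\KK$ when $a\in\KK$, so $J_n$ is an ideal of $\LL$; and $\LL/J_n$ embeds via $\psi^n$ into $X^{\otimes n}\otimes(\LL/\KK)\oplus\Der(X^{\otimes n})$, which is finite dimensional, so $J_n$ has finite codimension. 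It then suffices to show $K_n\supseteq J_n$.

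For this last inclusion, I start from a generator $b_a\in K_n$ with $\psi^n(b_a)=\theta^{\otimes n}\otimes a$ and bracket with $c\in\LL$ whose $\psi^n$-image carries a derivation $\partial_{x_i}$ in the $i$-th tensor slot (which can be arranged by applying recurrence $i$ times). The bracket formula writes $\psi^n([c,b_a])$ as a tail term lying in $\theta^{\otimes n}\otimes\KK\subseteq\psi^n(K_n)$ plus a non-zero scalar multiple of $\theta^{\otimes i-1}\otimes x^{d-2}\otimes\theta^{\otimes n-i}\otimes a$. Iterating the shift $x^j\mapsto jx^{j-1}$ in each slot, and invoking transitivity (Lemma~\ref{lem:transitive}) to reach every monomial $v\in X^{\otimes n}$ and recurrence to sweep the whole $a\in\KK$, one obtains $\psi^n(K_n)\supseteq X^{\otimes n}\otimes\KK$, that is, $K_n\supseteq J_n$. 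The main obstacle is organising this shift-and-induction argument cleanly, and ensuring that the scalars $1,2,\ldots,d-1$ are invertible in $\Bbbk$, which is automatic in characteristic~$0$ and in the characteristic-$p$, $d=p$ setting used in all the examples.
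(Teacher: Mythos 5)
Your proof is correct, but it is worth knowing that the paper offers no proof at all here: the lemma is prefaced by ``the following immediately follows from the definitions'' and closed with an empty \verb|\qedhere|. For three of the four implications that dismissal is fair, and your treatment of them matches what the author must have had in mind: ``regularly branched $\Rightarrow$ regularly weakly branched'' is literally the ``if furthermore'' clause of the definition; ``regularly weakly branched $\Rightarrow$ weakly branched'' is the iteration $X^{\otimes n}\otimes\KK\le\psi^n(\KK)$ applied to any $0\neq a\in\KK$; and ``branched $\Rightarrow$ weakly branched'' uses only the remark, made just after the definition of transitivity, that a transitive algebra is infinite dimensional, so a finite-codimension $K_n$ cannot be zero. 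Where you genuinely add value is in isolating ``regularly branched $\Rightarrow$ branched'' as non-immediate: the hypothesis only puts $\theta^{\otimes n}\otimes\KK$ inside $\LL$, whereas finite codimension of $K_n$ requires sweeping this out to all of $X^{\otimes n}\otimes\KK$ (your ideal $J_n$, whose finite codimension you establish correctly). Your shift-and-induct argument for $K_n\supseteq J_n$ is essentially the same mechanism the paper deploys in the \emph{next} lemma (producing $v\otimes a\in\LL$ for every $v\in X^{\otimes n}$ by bracketing $\theta^{\otimes n}\otimes a$ with $\U(\LL)$ and inducting in reverse lexicographic order on $v$) and again in the proof of Proposition~\ref{prop:ji}; in fact that next lemma's proof shows the shifted elements lie in the ideal generated by $\theta^{\otimes n}\otimes\KK$, which is exactly the strengthening you need, so you could have delegated the bookkeeping to it. Your caveat about the invertibility of $1,\dots,d-1$ is legitimate as stated for general $X=\Bbbk[x]/(x^d)$ in characteristic $p$, and is harmless in the paper's setting ($d=p$ or characteristic $0$); it is the same issue the paper silently elides when it writes $a\cdot\theta^{\otimes n+1}\in x^{d-2}\otimes\theta^{\otimes n}+\cdots$ in Lemma~\ref{lem:transitive}.
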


Note, as a partial converse, that if $\LL$ is weakly branched, then
\[\{a\in\LL\mid \theta^{\otimes n}\otimes a\le\LL\}=
\{a\in\LL\mid X^{\otimes n}\otimes a\le\LL\}=:\KK_n\]
is a non-trivial ideal in $\LL$:
\begin{lem}
  If $\LL$ is weakly branched, then for every $v\in X^{\otimes n}$
  there exists a non-zero $a\in \W(X)$ with $v\otimes a\in\LL$.
\end{lem}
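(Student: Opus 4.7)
The plan is to prove the stronger statement that
\[\KK_n := \{a \in \LL : \theta^{\otimes n} \otimes a \in \LL\}\]
is an ideal of $\LL$ and that $X^{\otimes n} \otimes \KK_n \subseteq \LL$. Since $\KK_n$ is nontrivial by weak branching, this immediately gives the lemma: for any $v$, take any nonzero $a \in \KK_n$.

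The first step is to show $\KK_n$ is an ideal. Given $b \in \LL$ and $a \in \KK_n$, I would iterate the recurrence hypothesis to construct $\tilde b \in \LL$ of depth at least $n$ (so acting trivially on $X^{\otimes n}$) with $(\varepsilon^{\otimes n} \otimes 1)\psi^n(\tilde b) = b$. Writing $\psi^n(\tilde b) = \sum_i u_i \otimes b_i \in X^{\otimes n} \otimes \LL$ with no derivation component, and using that $\theta^{\otimes n}$ is annihilated by the augmentation ideal $\varpi_n$ of $X^{\otimes n}$ so that $u \cdot \theta^{\otimes n} = \varepsilon^{\otimes n}(u)\,\theta^{\otimes n}$ for every $u \in X^{\otimes n}$, the wreath-product bracket computes as
\[[\tilde b, \theta^{\otimes n} \otimes a] = \sum_i \varepsilon^{\otimes n}(u_i)\,\theta^{\otimes n} \otimes [b_i, a] = \theta^{\otimes n} \otimes [b, a] \in \LL,\]
so $[b, a] \in \KK_n$.

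The second step establishes $v \otimes a \in \LL$ for every $v \in X^{\otimes n}$ and $a \in \KK_n$, by induction on the degree of $v$ under the paper's grading convention (negative on $X$, positive on $\LL$, so that $\theta^{\otimes n}$ has minimal degree). The base case $v = \theta^{\otimes n}$ is immediate. For the inductive step, transitivity lets us write $v$ as a linear combination of elements $b \cdot v'$ with $v'$ of strictly smaller degree. Writing $\psi^n(b) = \delta + \sum_j u_j \otimes b_j$ with $\delta \in \Der(X^{\otimes n})$, we have
\[[b, v' \otimes a] = (\delta \cdot v') \otimes a + \sum_j (u_j \cdot v') \otimes [b_j, a] \in \LL.\]
Since $\KK_n$ is an ideal by step one, $[b_j, a] \in \KK_n$; and since every $u_j$ has nonpositive degree, $\deg(u_j \cdot v') \le \deg v' < \deg v$, so each correction term $(u_j \cdot v') \otimes [b_j, a]$ lies in $\LL$ by induction. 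Subtracting yields $(b \cdot v') \otimes a \in \LL$, and summing produces $v \otimes a \in \LL$.

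The principal technical obstacle is the construction in step one of $\tilde b$ of depth at least $n$. The recurrence hypothesis supplies lifts only in $\ker \pi$ (depth $\ge 1$); iterating it naively produces elements in $\ker \pi$ but not in the successively deeper subspaces one needs. The remedy is to exploit the non-uniqueness of recurrence lifts, choosing at each of $n$ iterations a lift whose recursive $\LL$-coefficients themselves lie in $\ker \pi$, and propagating this condition $n$ times; this strengthened ``$n$-fold recurrence'' follows from combining recurrence with transitivity.
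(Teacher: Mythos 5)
Your overall strategy is the same as the paper's: extract a non-zero $a$ with $\theta^{\otimes n}\otimes a\in\LL$ from weak branching, use recurrence to show that everything in the $\LL$-submodule generated by $a$ also sits over $\theta^{\otimes n}$, then use transitivity plus an induction over $X^{\otimes n}$ to replace $\theta^{\otimes n}$ by an arbitrary $v$. The differences are in the bookkeeping: you induct on $\deg v$ where the paper inducts on the reverse-lexicographic order of monomials (with $\theta^{\otimes n}$ extremal), and you try to make the recurrence step exact by lifting $b$ to depth $n$, where the paper simply asserts the conclusion from recurrence and folds the transitivity error terms $v'\otimes a'$ into its induction.

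The genuine gap is precisely the one you flag and then wave away: the ``$n$-fold recurrence''. You correctly observe that naive iteration fails, but the proposed remedy --- choosing at each stage a lift whose $\LL$-coefficients again lie in $\ker\pi$ --- is asserted, not proved, and does not follow formally from the definitions. Recurrence only says that $(\varepsilon\otimes1)\psi:\ker\pi\to\LL$ is onto; what you need is that its restriction to $\ker\pi\cap\psi^{-1}(X\otimes\ker\pi)$ is still onto, and that subspace can have codimension up to $\dim X\cdot\dim\Der X$ in $\ker\pi$, so its image may a priori be a proper subspace of $\LL$. Transitivity concerns the action on $X^{\otimes n}$ and gives no obvious handle on this. (In fairness, the paper's proof is equally terse at the same spot --- ``because $\LL$ is recurrent, $\theta^{\otimes n}\otimes c\in\LL$ for all $c\in\KK$'' --- so you have located a real subtlety; but as written your proposal does not close it.) A secondary problem: your induction on $\deg v$ presupposes that $\LL$ is graded and that its image in $\Der(X^{\otimes n})$ acts by operators of strictly positive degree, neither of which is assumed in the lemma; and even granting that, transitivity yields $v=u\cdot\theta^{\otimes n}$ for some $u\in\U(\LL)$, not directly a decomposition $v=\sum b\cdot v'$ with each $v'$ of strictly smaller degree --- you would have to split the monomials of $u$ into homogeneous factors to extract one. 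The paper's reverse-lexicographic ordering sidesteps both of these issues.
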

\begin{proof}
  Let $0\neq a\in \W(X)$ be such that $\theta^{\otimes n}\otimes
  a\in\LL$; let $\KK_n$ denote the ideal generated by
  $a$. Because $\LL$ is recurrent, $\theta^{\otimes n}\otimes c\in\LL$
  for all $c\in\KK$. Because $\LL$ is transitive, there exists
  $u\in\U(\LL)$ with $u\cdot\theta^{\otimes n}=v$. Consider then
  $[[u,\theta^{\otimes n}\otimes a]]$. It is of the form $v\otimes
  a+\sum v'\otimes a'$, for some $a'\in\KK$ and $v'>v$ in reverse
  lexicographic ordering. By induction on $v$ in that ordering,
  $v\otimes a$ belongs to $\LL$.
\end{proof}

We are now ready to deduce some structural properties of (weakly)
branched Lie algebras:
\begin{prop}
  Let $\LL$ be a weakly branched Lie algebra. Then the centralizer of
  $\LL$ in $\W(X)$ is trivial. In particular, $\LL$ has trivial centre.
\end{prop}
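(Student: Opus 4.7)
The plan is to prove, by descent along the self-similarity map $\psi$, that any $z\in\W(X)$ commuting with $\LL$ must vanish; the triviality of the centre then follows because $Z(\LL)\subseteq\LL\cap C_{\W(X)}(\LL)$. The three features of weak branching (transitivity, recurrence, and the preceding lemma giving a non-zero $a\in\W(X)$ with $v\otimes a\in\LL$ for every $v\in X^{\otimes n}$) will each come in at a different stage.

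I would start by writing $\psi(z)=\sum_ix^i\otimes Z_i+D_z$ and $\psi(b)=\sum_jx^j\otimes B_j+D_b$ and expanding $[\psi(z),\psi(b)]=0$ in the semidirect product $X\otimes\LL\rtimes\Der X$. This splits into a $\Der X$-equation $[D_z,D_b]=0$ and, coefficient by coefficient in $x^k$, a $\W(X)$-equation $\sum_{i+j=k}[Z_i,B_j]+(k+1)\alpha B_{k+1}-(k+1)\beta Z_{k+1}=0$, where $D_z=\alpha\partial_x$ and $D_b=\beta\partial_x$. Since $\pi(\LL)=\Bbbk\partial_x$ and $\partial_x$ is centralized in $\Der X$ only by $\Bbbk\partial_x$, the first equation forces $D_z=\alpha\partial_x$; substituting the weakly branched element $b=x\otimes a$ (with $a\neq0$ from the lemma at $v=x$), for which $B_1=a$ and $D_b=0$, the $x^0$-coefficient of the second equation becomes $\alpha a=0$, so $\alpha=0$ and hence $D_z=0$.

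With $\psi(z)=\sum_ix^i\otimes Z_i$, the $\W(X)$-equation restricted to $b\in\ker\pi$ becomes $\sum_{i+j=k}[Z_i,B_j]=0$. At $k=0$ this is $[Z_0,B_0]=0$, and recurrence makes $B_0=(\varepsilon\otimes1)\psi(b)$ surject onto $\LL$, so $Z_0$ centralizes $\LL$; an inner induction on $i$ (using successively that $Z_0=\dots=Z_{i-1}=0$) then shows each $Z_i$ centralizes $\LL$. The outer induction is on the smallest level $\nu(z)$ at which $z\in\prod_k X^{\otimes k}\otimes\Der X$ has non-zero support: the base $\nu(z)=0$ is handled in the previous paragraph since then $z=D_z=0$; for $\nu(z)\geq1$, $\psi$ shifts levels down by one, so some $Z_i$ has $\nu(Z_i)=\nu(z)-1$, and the outer hypothesis combined with the inner argument forces $Z_i=0$, contradicting $\nu(Z_i)=\nu(z)-1$.

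The subtle point is the intertwining of the two inductions: $Z_i$ centralizes $\LL$ only once $Z_0,\dots,Z_{i-1}$ are known to vanish, and vanishing uses the outer hypothesis which needs $\nu(Z_j)<\nu(z)$; since some $Z_j$ may a priori have $\nu(Z_j)\geq\nu(z)$, one cannot invoke the outer hypothesis naively. The cleanest resolution is to iterate $\psi$ fully $n=\nu(z)$ times at the outset and run a single descent of depth $n$ on the tuple of iterated coordinates $Z_{\vec\imath}\in\W(X)$ for $\vec\imath\in\{0,\dots,d-1\}^n$: each such coordinate still centralizes $\LL$ by iterating the inner argument, its $\Der X$-part is $\alpha_{\vec\imath}\partial_x$, and the weak branching step of the second paragraph forces $\alpha_{\vec\imath}=0$, killing the level-$n$ component of $z$ all at once and completing the descent.
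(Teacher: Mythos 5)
Your proof is correct in substance but follows a genuinely different route from the paper's. The paper argues directly on the module $R(X)$: given a non-zero $a\in\W(X)$, it takes a shortest word $v\in X^{\otimes n}$ with $a\cdot v\neq0$, uses the lemma preceding the proposition to produce an element $b=1^{\otimes n-1}\otimes x^{i+1}\otimes b'\in\LL$ which annihilates $v\otimes w$ but not $(a\cdot v)\otimes w$, and checks that the single commutator $[a,b]$ acts non-trivially on $v\otimes w$; transitivity and recurrence enter only through that lemma. You instead work coordinate-wise in the wreath decomposition, killing the $\Der X$-part of a centralizing element by bracketing against a branching element $x\otimes a$, and propagating the centralizing property to the coordinates $Z_i$ via recurrence before iterating. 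Your route is longer, but it yields the stronger intermediate statement that all iterated coordinates of a centralizing element again centralize $\LL$ --- the Lie analogue of the fact that sections of elements centralizing a weakly branch group are again centralizing.

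One point needs tightening. You assert that the inner induction ``uses successively that $Z_0=\dots=Z_{i-1}=0$'', then worry (rightly, were that true) that vanishing is unavailable before the outer descent concludes, and propose iterating $\psi$ as the fix; but iteration would not help if vanishing were genuinely required at each layer. What actually saves the argument is that the inner induction needs only that $Z_0,\dots,Z_{k-1}$ \emph{centralize} $\LL$: in $\sum_{i+j=k}[Z_i,B_j]=0$ the terms with $i<k$ already die because $B_j\in\LL$, leaving $[Z_k,B_0]=0$ with $B_0$ ranging over all of $\LL$ by recurrence. Once this is observed the double induction collapses: every iterated coordinate centralizes $\LL$, hence has trivial $\Der X$-part, hence every homogeneous component $z_n$ of $z$ vanishes, and no descent on $\nu(z)$ is needed. (A minor caveat: your claim that $\partial_x$ is centralized in $\Der X$ only by $\Bbbk\partial_x$ requires $d\le p$ or characteristic $0$, which holds in all the paper's examples.)
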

\begin{proof}
  Consider a non-zero $a\in \W(X)$. There then exists $v\in X^{\otimes
    n}$ such that $a\cdot v\neq0$; suppose that $n$ is minimal for
  that property. Let $i\in\{0,\dots,d-1\}$ be maximal such that
  $1^{\otimes n-1}\otimes\varpi^i(a\cdot v)\neq0$. Since $\LL$ is
  weakly branch, there exists a non-zero element $b=1^{\otimes
    n-1}\otimes x^{i+1}\otimes b'\in\LL$; and because $b\neq0$, there
  exists $w\in X^{\otimes m}$ with $b\cdot x\neq0$. Consider
  $c=[a,b]$; the claim is that $c\neq0$. Indeed $c\cdot(v\otimes
  w)=a\cdot b\cdot(v\otimes w)-b\cdot a\cdot(vw)$; and
  $b\cdot(v\otimes w)=0$, while $b\cdot a\cdot(v\otimes w)=(a\cdot
  v)\otimes(b\cdot w)\neq0$.
\end{proof}

Recall that a (non-necessarily associative) algebra $\AA$ is PI
(``Polynomial Identity'') if there exists a non-zero polynomial
expression $\Psi(X_1,\dots,X_n)$ in non-associative, non-commutative
indeterminates such that $\Psi(a_1,\dots,a_n)=0$ for all $a_i\in\AA$.
\begin{prop}\label{prop:notPI}
  Let $\LL$ be a weakly branched Lie algebra. Then $\LL$ is not PI.
\end{prop}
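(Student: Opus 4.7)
The plan is a proof by contradiction: assume $\LL$ satisfies a non-trivial Lie polynomial identity and exhibit a tuple in $\LL$ on which it fails. By standard multilinearization I would reduce to a multilinear identity of the form
\[\Psi(X_1,\ldots,X_n)=\sum_{\sigma\in S_n}c_\sigma[X_{\sigma(1)},[X_{\sigma(2)},\ldots,[X_{\sigma(n-1)},X_{\sigma(n)}]\cdots]]\]
(right-normed expansion), with at least one $c_\sigma\neq 0$. Since PIs are preserved by permutation of variables, I may assume $c_{\mathrm{id}}\neq 0$.

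The key step is to invoke the extended weak-branching of the preceding lemma: for $m$ large and any $v\in X^{\otimes m}$ there exists a non-zero $b\in\W(X)$ with $v\otimes b\in\LL$. I would choose $v_1,\ldots,v_{n-1}\in X^{\otimes m}$ pairwise annihilating in the commutative algebra $X^{\otimes m}$; concretely $v_i=\theta\otimes w_i$ for distinct basis monomials $w_i\in X^{\otimes m-1}$, so that $v_iv_j\in\theta^2\otimes X^{\otimes m-1}=0$ because $\theta^2=x^{2d-2}=0$ in $X$. Lift each $v_i$ to $a_i=v_i\otimes b_i\in\LL$ with $b_i\neq 0$. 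By the bracket formula of~\S\ref{ss:full}, the first $m$ tensor factors of $[a_i,a_j]$ are $v_iv_j=0$, so $[a_i,a_j]=0$ for all $i\neq j\le n-1$; hence the operators $\ad(a_1),\ldots,\ad(a_{n-1})$ pairwise commute. For $a_n$, I would pick any element of $\LL$ with $\pi(a_n)=\partial_x$, which exists by transitivity, and verify by a direct action computation on a suitable tensor in $T(X)$ that $T:=\ad(a_1)\cdots\ad(a_{n-1})(a_n)$ acts non-trivially, so $T\neq 0$ by faithfulness.

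Evaluating $\Psi$ on this tuple, each right-normed monomial $[a_{\sigma(1)},[a_{\sigma(2)},\ldots,[a_{\sigma(n-1)},a_{\sigma(n)}]\cdots]]$ falls into three cases: (a) the innermost bracket $[a_{\sigma(n-1)},a_{\sigma(n)}]$ vanishes when both indices lie in $\{1,\ldots,n-1\}$; (b) $\sigma(n)=n$, and the monomial equals $\ad(a_{\sigma(1)})\cdots\ad(a_{\sigma(n-1)})(a_n)=T$ by commutativity of the $\ad(a_i)$; or (c) $\sigma(n-1)=n$, and one Jacobi move collapses the monomial to $\pm T$. Summing, $\Psi(a_1,\ldots,a_n)=c(\Psi)\,T$ where $c(\Psi)$ is a $\mathbb{Z}$-linear combination of the $c_\sigma$ with $c_{\mathrm{id}}$ appearing essentially only once. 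Thus $\Psi(a_1,\ldots,a_n)\neq 0$, contradicting the PI assumption.

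The main obstacle is the last step: showing $c(\Psi)\neq 0$. The Jacobi reductions in case~(c) mix contributions from different $\sigma$'s, and for certain identities (like $\Psi=[[X_1,X_2],[X_3,X_4]]$) the naive ``make $n-1$ of them commute'' recipe forces $c(\Psi)=0$. To handle these multi-cluster identities I would partition $\{a_1,\ldots,a_n\}$ into blocks whose supports $v_i$ annihilate only across blocks, retaining non-commutativity within each block, so that the bracket structure of $\Psi$ survives evaluation; the weak-branching hypothesis provides enough disjointly-supported elements at sufficiently deep levels to accommodate any such partition, but the combinatorial bookkeeping is the delicate point.
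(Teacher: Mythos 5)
Your proposal attempts a direct Lie-theoretic argument, whereas the paper deliberately avoids one: it observes that a weakly branched Lie algebra generates a weakly branched associative envelope $\AA(\LL)$ and then quotes Theorem~3.10 of the self-similar associative algebras paper, which says such associative algebras satisfy no polynomial identity; a Lie subalgebra generating a non-PI associative algebra cannot itself be PI. So even if your route worked it would be a genuinely different (and more self-contained) proof. Unfortunately it does not work as written, and you have in effect conceded this yourself: the entire content of the statement ``$\LL$ is not PI'' is that \emph{every} non-trivial multilinear identity fails on some tuple, and your construction only handles identities for which the coefficient sum $c(\Psi)$ survives the collapse to a single iterated commutator $T$. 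Once $\ad(a_1),\dots,\ad(a_{n-1})$ pairwise commute, every left-normed basis monomial ending in $a_n$ evaluates to the same element $T$, so $\Psi(a_1,\dots,a_n)=\bigl(\sum_\sigma c_\sigma\bigr)T$; identities such as $[[X_1,X_2],X_3]$ or $[[X_1,X_2],[X_3,X_4]]$ have vanishing coefficient sum (indeed the latter evaluates to $0$ outright since $[a_1,a_2]=0$), so the tuple witnesses nothing. The ``partition into blocks'' remedy is only named, not constructed, and it is exactly where the difficulty lives.

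There is also a concrete defect in the one case you do claim to settle. With $v_i=\theta\otimes w_i=x^{d-1}\otimes w_i$ and $\pi(a_n)=\partial_x$, the bracket $[a_i,a_n]$ has first tensor coordinate proportional to $\partial_x(x^{d-1})=(d-1)x^{d-2}$; applying a second $\ad(a_j)$ multiplies this by $x^{d-1}$, giving $x^{2d-3}=0$ in $\Bbbk[x]/(x^d)$ whenever $d\ge3$, and the remaining terms are killed by $v_iv_j=0$. So for $n\ge3$ your element $T=\ad(a_1)\cdots\ad(a_{n-1})(a_n)$ is itself zero, and the claimed non-vanishing in case~(b) fails. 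The standard way to complete a direct argument of this type is an induction on the degree of the identity: use the branching structure to specialize one variable into $\theta^{\otimes n}\otimes\KK$ so that the identity of degree $n$ forces an identity of degree $n-1$ on a copy of $\LL$ (or on $\KK$), terminating at the contradiction that $\LL$ is non-abelian. Your proposal lacks this inductive mechanism, and without it the single-tuple evaluation cannot treat all identities uniformly.
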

\begin{proof}
  There should exist a purely Lie-theoretical proof of this fact, but
  it is shorter to note that if $\LL$ is weakly branched, then its
  associative envelope $\AA$ (see~\S\ref{ss:assoc}) is weakly branched
  in the sense of~\cite{bartholdi:branchalgebras}*{\S3.1.6}. Weakly
  branched associative algebras satisfy no polynomial identity
  by~\cite{bartholdi:branchalgebras}*{Theorem~3.10}, so the same must
  hold for any Lie subalgebra that generates $\AA$.
\end{proof}

Recall that a Lie algebra is \emph{just infinite} if it is infinite
dimensional, but all its proper quotients are finite dimensional.
\begin{prop}\label{prop:ji}
  Let $\LL$ be a regularly branched Lie algebra, with branching ideal
  $\KK$. If furthermore $\KK/[\KK,\KK]$ is finite-dimensional, then
  $\LL$ is just infinite.
\end{prop}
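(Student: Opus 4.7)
Let $\II\triangleleft\LL$ be a nonzero ideal. First observe that $[\KK,\KK]$ is an ideal of $\LL$: by the Jacobi identity, $[\LL,[\KK,\KK]]\subseteq[[\LL,\KK],\KK]+[\KK,[\LL,\KK]]\subseteq[\KK,\KK]$, using $[\LL,\KK]\le\KK$. Hence $\KK^{(n)}:=\psi^{-n}(X^{\otimes n}\otimes[\KK,\KK])\subseteq\KK$ is an ideal of $\LL$ for every $n$ (using that $\psi$ is injective by faithfulness on $R(X)$, so the preimage is well-defined). The map $\psi^n$ identifies $\KK/\KK^{(n)}$ with a subspace of $X^{\otimes n}\otimes(\LL/[\KK,\KK])\oplus\Der(X^{\otimes n})$, which is finite-dimensional thanks to $\dim X<\infty$, $\dim\LL/\KK<\infty$ (regular branching), and $\dim\KK/[\KK,\KK]<\infty$ (hypothesis). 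Thus $\KK^{(n)}$ has finite codimension in $\LL$, and it suffices to show $\II\supseteq\KK^{(n)}$ for some $n$.

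The main device is ``deep copies''. Iterating $X\otimes\KK\le\psi(\KK)$ gives $X^{\otimes n}\otimes\KK\le\psi^n(\KK)$, so for each $v\in X^{\otimes n}$ and $b\in\KK$ there is a canonical element $\langle v,b\rangle\in\KK$ with $\psi^n(\langle v,b\rangle)=v\otimes b$; the bracket formula on $\W(X)$ yields the identity $[\langle v,b\rangle,\langle v',c\rangle]=\langle vv',[b,c]\rangle$. Pick $0\ne a\in\II$ and choose $n$ large enough that $\psi^n(a)=\sum_w w\otimes a_w+\delta$ has some nonzero $\LL$-component $a_{w_0}$; if the $\LL$-components vanish at every level, a preliminary bracket $[a,\ell]\in\II$ with a suitable $\ell\in\LL$ produces one (using that $\LL$ is centreless, proved earlier for weakly branched algebras).

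Bracketing $a$ with $\langle 1^{\otimes n},b\rangle$ for $b\in\KK$: since derivations annihilate constants,
\[
  \psi^n\bigl([a,\langle 1^{\otimes n},b\rangle]\bigr)=\sum_w w\otimes[a_w,b]\in X^{\otimes n}\otimes\KK.
\]
Iterating $k$ times with $b_1,\dots,b_k\in\KK$ produces elements of $\II$ whose $\psi^n$-images are $\sum_w w\otimes[[a_w,b_1],\dots,b_k]$ (iterated brackets), and these entries land in $[\KK,\KK]$ as soon as $k\ge 2$. The remaining task is to show that, as the $b_i$ vary and one further brackets against $\langle v,c\rangle$ for non-constant $v$ to access other coordinates, the resulting elements of $\II$ exhaust $\KK^{(n)}$ (possibly after enlarging $n$). \textbf{The main obstacle} is precisely this final combinatorial step: isolating individual $w$-components from $\sum_w w\otimes(\cdots)$ using the truncated multiplication $x^ix^j=0$ for $i+j\ge d$ in $X=\Bbbk[x]/(x^d)$ (inducting in reverse-lex order on $X^{\otimes n}$), and verifying that the iterated brackets $[[a_w,b_1],\dots,b_k]$ with $b_i$ ranging over $\KK$ span all of $[\KK,\KK]$ rather than some proper subspace --- a claim relying on transitivity and recurrence of $\LL$ together with the finite codimension of $[\KK,\KK]$ in $\KK$.
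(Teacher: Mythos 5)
Your setup is sound --- reducing the problem to showing $\II\supseteq\psi^{-n}(X^{\otimes n}\otimes[\KK,\KK])$ for some $n$, and using the copies $v\otimes b\in\psi^n(\KK)$ provided by regular branching --- but the step you flag as ``the main obstacle'' is not a routine verification; it is the entire content of the proof, and the brackets you chose cannot close it. Bracketing $a$ against $\langle 1^{\otimes n},b\rangle$ makes the derivation component $\delta$ of $\psi^n(a)$ act on the constant $1^{\otimes n}$ and hence vanish, so you only ever produce $\sum_w w\otimes[[a_w,b_1],\dots,b_k]$: iterated brackets anchored at the finitely many fixed components $a_w$. There is no reason these span $[\KK,\KK]$ (they lie in the span of $[[a_w,\KK],\dots,\KK]$, which can be a proper subspace), and you have no mechanism for separating the sum over $w$ into individual coordinates. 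The part of $a$ you discard --- its derivation term --- is precisely the lever the paper's proof uses.

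The paper argues as follows. Take $\II=\langle a\rangle$ principal, let $n$ be minimal with $a\cdot X^{\otimes n}\neq0$, and use transitivity to replace $a$ by $b=[[u,a]]=1^{\otimes n-1}\otimes\partial_x+1^{\otimes n}\otimes b'+\text{higher-order terms}\in\II$, i.e.\ normalize so that the lowest-order part is a pure derivation. Then for $c=\theta^{\otimes n}\otimes k\in\KK$ one gets $[b,c]=\theta^{\otimes n-1}\otimes x^{d-2}\otimes k+\theta^{\otimes n}\otimes[b',k]+\cdots$, and bracketing once more against $1^{\otimes n-1}\otimes x\otimes\ell\in\KK$ multiplies coordinatewise: $x^{d-2}\cdot x=\theta$ resurrects the first term as $\theta^{\otimes n}\otimes[k,\ell]$, while $\theta\cdot x=0$ in $X=\Bbbk[x]/(x^d)$ annihilates everything else. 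This single computation simultaneously isolates one tensor coordinate and produces $[k,\ell]$ for \emph{arbitrary} $k,\ell\in\KK$; transitivity then spreads $\theta^{\otimes n}\otimes[\KK,\KK]$ to all of $X^{\otimes n}\otimes[\KK,\KK]\subseteq\II$, which has finite codimension. Without this (or an equivalent) device for killing the unwanted terms and detaching the brackets from the components of $a$, your outline does not yield the proposition.
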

(Note that the proposition's conditions are clearly necessary;
otherwise, $\LL/[\KK,\KK]$ would itself be a finite-dimensional proper
quotient).
\begin{proof}
  Consider a non-zero ideal $\II\triangleleft\LL$. Without loss
  of generality, $\II=\langle a\rangle$ is principal. Let $n\in\N$ be
  minimal such that $a\cdot X^{\otimes n}\neq0$; so
  $a=x^{i_1}\otimes\cdots\otimes x^{i_{n-1}}\otimes\partial_x+$
  higher-order terms. Because $\LL$ is transitive, we can derive
  $i_1+\cdots+i_{n-1}$ times $a$, by an element $u\in\U(\LL)$, to
  obtain $b=[[u,a]]\in\II$ of the form
  \[b=1^{\otimes n-1}\otimes\partial_x+1^{\otimes n}\otimes
  b'+\text{higher-order terms}.\]

  Consider now $c=\theta^n\otimes k\in\KK$. Then
  $[b,c]=\theta^{n-1}\otimes x^{d-2}\otimes
  k+\theta^n\otimes[b',k]+$higher-order terms.

  Consider next $d=1^{\otimes n-1}\otimes x\otimes\ell\in\KK$. Then
  $[[b,c],d]=\theta^n\otimes[k,\ell]\in\II$.

  It follows that $\II$ contains $X^{\otimes n}\otimes[\KK,\KK]$, and
  therefore that $\LL/\II$ has finite dimension.
\end{proof}

\begin{cor}\label{cor:ji}
  Let $\LL$ be a finitely generated, nil, regularly branched Lie
  algebra. Then $\LL$ is just infinite.
\end{cor}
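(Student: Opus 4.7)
The natural strategy is to verify the hypothesis of Proposition~\ref{prop:ji}, namely that the branching ideal $\KK\triangleleft\LL$ has finite-dimensional abelianisation $\KK/[\KK,\KK]$, and then to quote that proposition directly. Everything hinges on this reduction.

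The foundational observation I would use is that an abelian restricted Lie algebra which is finitely generated and nil is automatically finite-dimensional: if it is generated as a restricted algebra by $a_1,\dots,a_r$ with each $a_i^{p^{n_i}}=0$, then it is the $\Bbbk$-span of the finite set $\{a_i^{p^j}:1\le i\le r,\,0\le j<n_i\}$. Applied to $\LL$ itself this already gives that $\LL/[\LL,\LL]$ is finite-dimensional. To promote this from $\LL$ to $\KK$ I would proceed in two steps. First, since $\KK$ has finite codimension in $\LL$, the restricted enveloping algebra $\mathfrak{u}(\LL/\KK)$ has finite dimension $p^{\dim\LL/\KK}$; and the adjoint action makes $\KK/[\KK,\KK]$ a module over it (the action of $\KK$ on its own abelianisation is trivial, and the $p$-mapping descends because $\LL$ is restricted). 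Any finitely generated module over a finite-dimensional algebra is finite-dimensional, so it is enough to produce a finite generating set of $\KK/[\KK,\KK]$ as a $\mathfrak{u}(\LL/\KK)$-module.

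Second, I would obtain such a finite generating set from a finite restricted generating set $s_1,\dots,s_n$ of $\LL$ as follows: pick a vector-space complement $V$ of $\KK$ in $\LL$, decompose each $s_i=v_i+k_i$ with $v_i\in V$ and $k_i\in\KK$, and take the $\KK$-components of the brackets $[v_i,v_j]$ and of the restricted powers $v_i^p$. A direct bookkeeping argument shows that the $k_i$ together with these finitely many correction terms generate $\KK$ as an ideal of $\LL$, hence as a $\mathfrak{u}(\LL)$-module via $\ad$, and their images generate $\KK/[\KK,\KK]$ over $\mathfrak{u}(\LL/\KK)$.

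The delicate point, which I expect to be the main obstacle, is this last bookkeeping: unlike in group theory, where Reidemeister--Schreier guarantees that finite-index subgroups of finitely generated groups are finitely generated, finite codimension does not in general imply finite generation for Lie subalgebras (witness the derived subalgebra of a free Lie algebra of rank $\ge 2$). One must therefore work with generation as an \emph{ideal} rather than as a Lie subalgebra, which is subtler but still tractable, and then leverage the restricted structure to convert this into module-finite-generation over the \emph{finite}-dimensional algebra $\mathfrak{u}(\LL/\KK)$. Regular branching plays no role in this reduction beyond supplying the ideal $\KK$ itself; it is the nilness hypothesis, combined with the restrictedness of $\mathfrak{u}(\LL/\KK)$, that does the real work.
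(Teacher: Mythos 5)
Your overall strategy agrees with the paper's: reduce to Proposition~\ref{prop:ji} by showing $\KK/[\KK,\KK]$ is finite-dimensional, and use the observation that a finitely generated nil abelian restricted Lie algebra is finite-dimensional. The paper gets there in one line: it quotes a theorem it attributes to Kukin, that a subalgebra of finite codimension in a finitely generated Lie algebra is again finitely generated, so that $\KK/[\KK,\KK]$ is a finitely generated, nil, abelian restricted Lie algebra and your ``foundational observation'' applies to it directly. Your scepticism about such a Schreier-type statement is well placed for unrestricted Lie algebras (your example of $[F,F]\le F$ is correct), which shows that the paper must be read under its standing convention that all algebras are restricted; but the route you substitute for it does not work.

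The gap is the step ``generate $\KK$ as an ideal of $\LL$, hence as a $\mathfrak u(\LL)$-module via $\ad$, and their images generate $\KK/[\KK,\KK]$ over $\mathfrak u(\LL/\KK)$.'' In the restricted setting, the ideal generated by a finite set is its closure under $[\LL,-]$ \emph{and} under the $p$-mapping, and $p$-th powers are genuinely new module generators: for $a\in\KK$ one has $[x,a^p]=(\ad a)^{p-1}([x,a])\in[\KK,\KK]$, so the classes of $p$-th powers in $\KK/[\KK,\KK]$ are \emph{annihilated} by the augmentation ideal of $\mathfrak u(\LL/\KK)$, not produced by it. If your step were valid, nilness would play no role and every finite-codimension ideal of a finitely generated restricted Lie algebra would have finite-dimensional abelianisation; this is false. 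Take $\LL$ spanned by $x,y$ and central elements $z_0,z_1,\dots$ with $[x,y]=z_0$, $x^p=y^p=0$, $z_i^p=z_{i+1}$: it is generated by $x,y$ as a restricted algebra, $\KK=\mathrm{span}\{z_i\}$ is an abelian ideal of codimension $2$ generated as a restricted ideal by the single element $z_0$ (which is exactly what your bookkeeping produces), yet the $\mathfrak u(\LL/\KK)$-module it generates in $\KK=\KK/[\KK,\KK]$ is just $\Bbbk z_0$, and $\KK$ is infinite-dimensional. Note that, contrary to your closing remark, your written argument never actually invokes nilness in the treatment of $\KK$ --- which is the symptom of the gap. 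The repair is either the paper's appeal to the finite-generation theorem for finite-codimension restricted subalgebras, or the observation that $\KK/[\KK,\KK]$ is the closure under its (now $p$-semilinear) $p$-mapping of the finite-dimensional subspace $\mathfrak u(\LL/\KK)\cdot\{\text{ideal generators}\}$; only after that does your foundational observation, i.e.\ nilness, finish the proof.
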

\begin{proof}
  Let $\KK$ denote the branching ideal of $\LL$.  Since $\LL$ is
  finitely generated and $\KK$ has finite index in $\LL$, it is also
  finitely generated~\cite{kukin:subalgebras}. Since $\LL$ is nil, the
  abelianization of $\KK$ is finite and we may apply
  Proposition~\ref{prop:ji}.
\end{proof}

\begin{prop}\label{prop:unboundednil}
  Let $\LL$ be a regularly branched Lie algebra. Then there does not
  exist a bound on its nillity.
\end{prop}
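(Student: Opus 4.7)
The plan is to argue by contradiction and reduce directly to Proposition~\ref{prop:notPI}. Suppose an $N\in\N$ existed such that $\ad(a)^N=0$ for every $a\in\LL$; this is the natural reading of ``bounded nillity'' in view of the characterisation in the introduction that $\LL$ is nil precisely when $\ad(x)$ is nilpotent for every $x\in\LL$. In the restricted setting, the apparently stronger hypothesis ``there is $k$ with $a^{p^k}=0$ for all $a\in\LL$'' reduces to the same thing: iterating the restricted Lie axiom $\ad(a^p)=\ad(a)^p$ yields $\ad(a)^{p^k}=\ad(a^{p^k})=0$, so one may take $N=p^k$.

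Under this assumption, $\LL$ satisfies the Engel relation
\[\ad(x)^N(y)=0\quad\text{for all }x,y\in\LL,\]
which is a non-trivial Lie polynomial identity in the non-associative, non-commutative indeterminates $x,y$. Hence $\LL$ is PI in the sense defined just above Proposition~\ref{prop:notPI}. On the other hand, ``regularly branched'' implies ``weakly branched'' by the Lemma preceding that proposition, and Proposition~\ref{prop:notPI} asserts that every weakly branched Lie algebra fails to be PI. These two conclusions contradict each other, so no such $N$ can exist.

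There is essentially no obstacle: the whole argument amounts to recognising that a uniform nillity bound encodes an Engel-type Lie polynomial identity, after which one simply invokes Proposition~\ref{prop:notPI}. The only point requiring a moment's care is the translation, in the restricted case, between $p$-mapping nillity and $\ad$-nillity, which is immediate from the restricted Lie axiom $\ad(a^p)=\ad(a)^p$.
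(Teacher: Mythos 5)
Your argument is correct, and it takes a genuinely different route from the paper's. You argue by contradiction: a uniform bound $N$ on nillity yields, via the restricted-Lie axiom $\ad(a^p)=\ad(a)^p$, the Engel identity $\ad(x)^N(y)=0$, a non-zero Lie polynomial identity in two indeterminates; since regularly branched implies weakly branched, this contradicts Proposition~\ref{prop:notPI}. The paper instead gives a direct, constructive proof entirely inside the branching ideal $\KK$: starting from a non-trivial nil element $a\in\KK$ with $a^{p^m}=0\neq a^{p^{m-1}}$ and an element $b=1^{\otimes\ell}\otimes\partial_x+{}$higher terms in $\KK$, it sets $a_0=a$ and $a_{n+1}=1^{\otimes\ell-1}\otimes\theta\otimes a_n+b$, and shows by induction that $a_n$ has nillity exactly $p^{m+n}$. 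Your reduction is shorter, needs only the weakly branched hypothesis (so it covers regularly weakly branched algebras as well), and cleanly isolates the point that a nillity bound is a polynomial identity; on the other hand it is non-constructive and ultimately rests on the external theorem on weakly branched associative algebras cited in the proof of Proposition~\ref{prop:notPI}, whereas the paper's argument is self-contained at this point and produces explicit witnesses of every prescribed nillity $p^{m+n}$. Both translations of ``bounded nillity'' (on the $p$-mapping or on $\ad$) are handled correctly by your remark that $\ad(a^{p^k})=\ad(a)^{p^k}$.
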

\begin{proof}
  Assume that $\KK$ contains a non-trivial nil element, say $a\in\KK$
  with $a^{p^m}=0$ but $a^{p^{m-1}}\neq0$. Let $b\in\KK$ be such that
  $b=1^{\otimes\ell}\otimes\partial_x+$higher terms. Construct then
  the following sequence of elements: $a_0=a$, and
  $a_{n+1}=1^{\otimes\ell-1}\otimes\theta\otimes a_n+b$. By induction,
  the element $a_n$ has nillity exactly $p^{m+n}$.
\end{proof}

The following is essentially~\cite{bartholdi:branchalgebras}*{Proposition~3.5}:
\begin{prop}\label{prop:hdimrat}
  Let $\LL$ be a regularly branched Lie algebra. Then its Hausdorff
  dimension is a rational number in $(0,1]$.
\end{prop}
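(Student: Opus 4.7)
My plan is to translate the branching condition into a two-step linear recurrence on $k_n:=\dim\KK_n$ and solve it explicitly. Throughout I set $d=\dim X$, $c=\dim(\LL/\KK)<\infty$, $\ell_n=\dim\LL_n$, and $w_n=\dim\W(X)_n=(d^n-1)\dim\Der X/(d-1)$. The homomorphism $\psi:\LL\to X\otimes\LL\oplus\Der X$ is injective because $\LL$ acts faithfully on $R(X)$. Since $\LL_n/\KK_n$ is a quotient of the finite-dimensional space $\LL/\KK$, the integer sequence $\ell_n-k_n$ is nondecreasing, bounded by $c$, and therefore eventually equal to some $c''\le c$ for all $n\ge n_0$. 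In particular $\lim\ell_n/w_n=\lim k_n/w_n$, so it suffices to pin down the asymptotics of $k_n$.

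Set $J_n=\ker(\LL\to\LL_n)$ and $I_n=\ker(\KK\to\KK_n)=J_n\cap\KK$. Eventual constancy of $\ell_n-k_n$ reads $\ell_n-\ell_{n-1}=k_n-k_{n-1}$ for $n\ge n_0$, so the natural injection $I_{n-1}/I_n\hookrightarrow J_{n-1}/J_n$ between finite-dimensional spaces of equal dimension is an isomorphism. Via the injective $\psi$, the subspace $I_n\le\KK$ is carried onto $\psi(\KK)\cap(X\otimes J_{n-1})$, which contains $X\otimes I_{n-1}$ by the branching inclusion $X\otimes\KK\subseteq\psi(\KK)$. The composition
\[X\otimes I_{n-1}\;\hookrightarrow\;\psi(\KK)\cap(X\otimes J_{n-1})\;\twoheadrightarrow\;X\otimes(J_{n-1}/J_n)\]
is then surjective (using $I_{n-1}/I_n=J_{n-1}/J_n$) with kernel $X\otimes I_n$, forcing $I_n/I_{n+1}\cong X\otimes(I_{n-1}/I_n)$, and equivalently
\[k_{n+1}-k_n\;=\;d\,(k_n-k_{n-1})\qquad\text{for all }n\ge n_0.\]

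Summing the geometric series solves the recurrence as $k_n=\alpha d^n+\beta$ on $n\ge n_0$, with $\alpha=(k_{n_0+1}-k_{n_0})/\bigl((d-1)d^{n_0}\bigr)$ and $\beta$ both rational. The coefficient $\alpha$ is strictly positive: if it vanished, $k_n$ would be eventually constant and $\KK$ finite-dimensional, contradicting $\dim\KK\ge d\dim\KK$, which is forced for nonzero $\KK$ and $d\ge2$ by the injection $X\otimes\KK\hookrightarrow\psi(\KK)\cong\KK$. Consequently
\[\Hdim(\LL)\;=\;\lim_{n\to\infty}\frac{\ell_n}{w_n}\;=\;\frac{\alpha(d-1)}{\dim\Der X}\]
is a rational number in $(0,1]$, the upper bound being automatic from $\LL_n\le\W(X)_n$.

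The chief technical nuisance is establishing the identification $I_n/I_{n+1}\cong X\otimes(I_{n-1}/I_n)$: the branching inclusion $X\otimes\KK\subseteq\psi(\KK)$ is only one-sided, and promoting it to an equality at the level of associated-graded pieces relies on the preliminary stabilization $J_{n-1}/J_n=I_{n-1}/I_n$, itself a consequence of the finite-dimensionality of $\LL/\KK$ and the faithfulness of $\LL$ on $R(X)$.
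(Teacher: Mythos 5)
Your argument is correct and follows essentially the same route as the paper: the branching inclusion $X\otimes\KK\le\psi(\KK)$ together with the finite codimension of $\KK$ forces the dimensions of the finite quotients to satisfy, for all large $n$, a linear recurrence with ratio $\dim X$, which one solves as $\alpha(\dim X)^n+\beta$ with $\alpha,\beta$ rational and $\alpha>0$. The only cosmetic difference is that the paper runs a first-order affine recurrence directly on $\dim\LL_n$ (with inhomogeneous term $\dim(\psi\KK/(X\otimes\KK))$ and a stabilization index $M$), whereas you difference twice on $\dim\KK_n$ and transfer back to $\LL$ via the bounded, eventually constant gap $\dim\LL_n-\dim\KK_n$.
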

\begin{proof}
  Suppose that $\LL$ is regularly branched over $\KK$.  As
  in~\S\ref{ss:hausdorff}, let $\LL_n$ denote the image of $\LL$ in
  $\Der(X^{\otimes n})$, with quotient map $\pi_n:\LL\to\LL_n$. Let
  $M$ be large enough so that $\LL/\psi^{-1}(X\otimes\KK)$
  maps isomorphically onto its image in $\LL_n$. We have, for all
  $n\ge M$,
  \begin{align*}
    \dim \LL_n&=\dim(\LL/\KK)+\dim\pi_n(\KK)\\
    &=\dim(\LL/\KK)+\dim(\psi\KK/(X\otimes\KK))+\dim(X)\dim\pi_{n-1}(\KK)\\
    &=(1-\dim X)\dim(\LL/\KK)+\dim(\psi\KK/(X\otimes\KK))+\dim X\dim\LL_{n-1}.
  \end{align*}
  We write $\dim\LL_n=\alpha\dim(X)^n+\beta$, for some $\alpha,\beta$ to
  be determined; we have
  \begin{multline*}
    \alpha\dim(X)^n+\beta=(1-\dim X)\dim(\LL/\KK)+\dim(\psi\KK/(X\otimes\KK))\\
    +\dim(X)(\alpha\dim(X)^{n-1}+\beta),
  \end{multline*}
  so $\beta=\dim(\LL/\KK)-\dim(\psi\KK/(X\otimes\KK))/(\dim X-1)$. Then
  set $\alpha=(\dim \LL_M-\beta)/\dim(X)^M$. We have solved the
  recurrence for $\dim \LL_n$, and $\alpha>0$ because $\LL_n$ has
  unbounded dimension, since $\LL$ is infinite-dimensional.

  Now it suffices to note that $\Hdim(\LL)=\alpha$ to obtain
  $\Hdim(\LL)>0$. Furthermore only linear equations with integer
  coefficients were involved, so $\Hdim(\LL)$ is rational.
\end{proof}

Note that, if $\LL\to\LL\wr\mathscr P$ is the self-similarity
structure, then $\Hdim_{\mathscr P}(\LL)$ is also positive and
rational.

\begin{prop}\label{prop:GKlower}
  Let $\LL$ be a regularly weakly branched self-similar Lie
  algebra. Suppose $\LL$ is graded, with dilation $\lambda>1$. Then
  the Gelfand-Kirillov dimension of $\LL$ is at least $\log(\dim
  X)/\log\lambda$.
\end{prop}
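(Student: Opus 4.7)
My plan is to exploit the branching to produce $(\dim X)^n$ linearly independent elements of $\LL$ in a narrow degree band, and then to convert this Hilbert-function lower bound into a Gelfand--Kirillov bound.

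First I would fix a non-zero homogeneous $k\in\KK$ of degree $d_0>0$. Iterating the branching inclusion $X\otimes\KK\le\psi(\KK)$ yields $X^{\otimes n}\otimes k\le\psi^n(\KK)$ for every $n\ge0$. Since $\LL$ is faithful, $\psi^n$ is injective, so for every basis vector $v$ of $X^{\otimes n}$ there is a unique $k_v\in\KK\subseteq\LL$ with $\psi^n(k_v)=v\otimes k$; the family $\{k_v\}$ is linearly independent and has cardinality $(\dim X)^n$. Moreover $\deg(k_v)=\deg(v)+\lambda^n d_0$, and since $X$ is graded in non-positive degrees, $\deg(k_v)\le\lambda^n d_0$. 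Consequently $\dim\LL_{\le\lambda^n d_0}\ge(\dim X)^n$.

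Next I would choose a finite-dimensional $S\le\LL$, consisting of positive-degree elements and containing $k$, that generates a subalgebra $\LL'\le\LL$ housing all the $k_v$. Let $d_{\min}$ be the smallest degree appearing in $S$, and $V_n=S+S^2+\cdots+S^n$ the span of iterated brackets of $S$ of length at most $n$. Any homogeneous element of $\LL'$ of degree $d$ is a sum of $k$-fold brackets of elements of $S$ with $k\le d/d_{\min}$, so lies in $V_{\lfloor d/d_{\min}\rfloor}$. Applied to the $(\dim X)^m$ elements $k_v$ of degree at most $\lambda^m d_0$, this gives $\dim V_n\ge(\dim X)^m$ for $n\asymp\lambda^m$, i.e.\ $\dim V_n\ge Cn^{\log(\dim X)/\log\lambda}$. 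Hence $\GKdim(\LL,S)\ge\log(\dim X)/\log\lambda$ and the proposition follows.

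The hard part will be producing a suitable $S$: although $\{k_v\}$ is an infinite family, it arises from recursively branching off the single element $k$, and one expects to take $S=\Bbbk k+\Bbbk a+T$ for an $a\in\LL$ with $\pi(a)=\partial_x$ and a finite $T\subseteq\LL$ that realises one step of the branching recursion through brackets, using the recurrence and transitivity built into the definition of regularly weakly branched. In the concrete examples of~\S\ref{ss:examples} the whole $\LL$ is finitely generated in bounded positive degrees, so this last step is essentially automatic.
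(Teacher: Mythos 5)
Your core argument is exactly the paper's: iterate the branching inclusion $X\otimes\KK\le\psi(\KK)$ to embed $X^{\otimes n}\otimes\Bbbk k$ into $\LL$ via $\psi^n$, observe that these $(\dim X)^n$ independent elements all have degree at most $\lambda^n\deg(k)$ because $X$ is graded non-positively, and conclude $\dim\LL_{\le d}\ge(d/\deg k)^{\log(\dim X)/\log\lambda}$. The paper's proof stops there: it treats the growth of the span of elements of degree $\le d$ as the Gelfand--Kirillov dimension and never performs your second step of exhibiting a finite-dimensional $S$ whose iterated brackets capture the $k_v$. Your instinct that this conversion requires an argument is sound --- the paper's $\GKdim$ is a supremum over finite-dimensional subspaces $S$ of the growth of $S^n$, and for a non-finitely-generated algebra the Hilbert-function bound alone does not formally imply the $\GKdim$ bound --- and your sketch of assembling $S$ from $k$, a lift of $\partial_x$, and finitely many elements realising one step of the recursion is the right idea; but you leave that step as a sketch, whereas in all the paper's applications $\LL$ is finitely generated by positive-degree homogeneous elements and the conversion is immediate, as you note. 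So: same approach, with you flagging (but not fully closing) a point the paper passes over silently.
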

\begin{proof}
  Suppose $\LL$ is weakly branched over $\KK$; consider a non-zero
  $a\in\KK$. Let $\epsilon$ be the degree of $a$. Then $\LL$ contains
  for all $n\in\N$ the subspace $X^{\otimes n}\otimes\Bbbk a$; and the
  maximal degree of these elements is $\lambda^n\epsilon$. Let $\LL_d$
  denote the span of elements of degree $\le d$; it follows that
  $\dim\LL_d\ge(\dim X)^n$ whenever $\lambda^n\epsilon\le d$, and therefore
  \[\dim\LL_d\ge (d/\epsilon)^{\log(\dim X)/\log\lambda}.\qedhere\]
\end{proof}

\section{Examples}\label{ss:examples}
We begin by two examples of Lie algebras, inspired and related to
group-theoretical constructions. The links between the groups and the
Lie algebras will be explored in~\S\ref{ss:ssg}. We then phrase in
our language of self-similar algebras an example by Petrogradsky,
later generalized by Shestakov and Zelmanov.

\subsection{The Gupta-Sidki Lie algebra}\label{ss:gsla}
Inspired by the self-similarity structure~\eqref{eq:gsgp}, we consider
$X=\mathbb F_3[x]/(x^3)$ and a Lie algebra $\LLgs=\LL$ generated by $a,t$
with self-similarity structure
\begin{equation}\label{eq:gsla}
  \psi:\begin{cases}
    \LL\to\LL\wr\Der X\\
    a\mapsto\partial_x\\
    t\mapsto x\otimes a+x^2\otimes t.
  \end{cases}
\end{equation}
We put a grading on $\LL$ such that the generators $a,t$
are homogeneous. The ring $X$ is $\Z$-graded, with $\deg(x)=-1$ so
$\deg(a)=1$, and
\[\deg(t)=-2+\lambda\deg(t)=-1+\lambda\deg(a),\]
so $(1-\lambda)^2=2$ and $\deg(t)=\sqrt2$; the Lie algebra $\LL$ is
$\Z[\lambda]/(\lambda^2-2\lambda-1)$-graded.
We repeat the construction using our matrix notation. For that
purpose, we take divided powers $\{1,t,t^2/2=-t^2\}$
as basis of $X$; the self-similarity structure is then
\[a\mapsto\begin{pmatrix} 0 & 1 & 0\\ 0 & 0 & 1\\ 0 & 0 & 0\end{pmatrix},\quad
  t\mapsto\begin{pmatrix} 0 & 0 & 0\\ a & 0 & 0\\ -t & -a & 0\end{pmatrix}.\]

\begin{prop}
  The Lie algebra $\LL$ is regularly branched on its ideal $[\LL,\LL]$
  of codimension $2$.
\end{prop}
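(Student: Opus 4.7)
The plan is to check two things: that $\KK:=[\LL,\LL]$ has codimension $2$ in $\LL$, and that $X\otimes\KK\le\psi(\KK)$. For the first, $\LL$ is generated by $a,t$, so $\LL/\KK$ is abelian and spanned by $\bar a,\bar t$, giving codimension at most $2$. Equality follows from the grading: every nonzero element of $\KK$ is a sum of iterated brackets of at least two generators, hence of degree $\ge\deg a+\deg t=1+\sqrt2$ (using $[a,a]=[t,t]=0$), whereas $\deg a=1$ and $\deg t=\sqrt2$ are strictly smaller, so $a,t$ are linearly independent modulo $\KK$.

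For the branching, set $u:=[a,t]\in\KK$, so in $\mathbb F_3$ one computes $\psi(u)=1\otimes a-x\otimes t$, and then a short wreath-product bracket calculation gives
\[\psi([t,u])=x^2\otimes u,\quad \psi([a,[t,u]])=-x\otimes u,\quad \psi([a,[a,[t,u]]])=-1\otimes u,\]
showing $X\otimes u\le\psi(\KK)$. Because $\LL/\KK$ is abelian of dimension $2$, the subspace $\KK$ coincides with the $\LL$-ideal generated by $u$, so it is enough to prove that $\mathscr J:=\{v\in\LL:X\otimes v\le\psi(\KK)\}$ is stable under bracketing with every $w\in\LL$: then $\mathscr J$ is a Lie ideal containing $u$, hence contains $\KK$.

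The key lemma is that $v\in\mathscr J$ implies $[w,v]\in\mathscr J$ for all $w\in\LL$. Its proof needs $\LL$ to be recurrent: restricted to $\ker\pi$ the map $(\varepsilon\otimes1)\psi$ is a Lie homomorphism (all derivation terms vanish), and it sends $u\mapsto a$ and $[a,u]\mapsto -t$, so its image contains both generators of $\LL$. Thus for every $w\in\LL$ we may pick $\tilde w\in\ker\pi$ with $\psi(\tilde w)=1\otimes w+x\otimes w'+x^2\otimes w''$ for some $w',w''\in\LL$, and for each $f\in\{1,x,x^2\}$ an element $c_f\in\KK$ with $\psi(c_f)=f\otimes v$. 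Then $[\tilde w,c_f]\in\KK$ and
\[\psi([\tilde w,c_f])=f\otimes[w,v]+xf\otimes[w',v]+x^2f\otimes[w'',v].\]
Taking $f=x^2$ the extra terms vanish (as $x^3=0$), so $x^2\otimes[w,v]\in\psi(\KK)$; taking $f=x$ the last term vanishes and the middle $x^2\otimes[w',v]$ was just handled (by the case $f=x^2$ applied to $w'$); finally $f=1$ reduces to the two previous cases applied to $w'$ and $w''$. Iterating the lemma starting from $v=u$ yields $\mathscr J\supseteq\KK$.

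The main obstacle is the bootstrap from $u$ to all of $\KK$: the finite computation $X\otimes u\le\psi(\KK)$ is routine, but extending it to the infinite-dimensional $\KK$ requires both the recurrent structure (to manufacture a lift $\tilde w\in\ker\pi$ of each $w\in\LL$) and a downward induction on $x$-adic order, exploiting $x^3=0$ to solve the top-order case unconditionally and cascade down to $f=x$ and $f=1$.
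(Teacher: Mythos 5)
Your proof is correct and follows essentially the same route as the paper's: the same recurrence computation, the same branching witness (your $[t,u]=-[[a,t],t]$ with $\psi([t,u])=x^2\otimes[a,t]$), and your ``key lemma'' is just an explicit unwinding of the bootstrap that the paper compresses into the phrase ``it suffices to exhibit $c'\in[\LL,\LL]$ with $\psi(c')=x^2\otimes c$''; your grading argument for the codimension is a legitimate justification of what the paper dismisses as ``clear''. The one item you omit is transitivity, which is part of the paper's definition of a (regularly) branched algebra: it follows in one line from Lemma~\ref{lem:transitive}, since $\LL$ is recurrent and $\U(\LL)\cdot\theta=X$ because $a$ acts as $\partial_x$.
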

\begin{proof}
  First, $\LL$ is recurrent: indeed $(\varepsilon\otimes1)\psi[a,t]=a$
  and $(\varepsilon\otimes1)\psi[a,[a,t]]=t$. Then, by
  Lemma~\ref{lem:transitive}, $\LL$ is transitive.

  The ideal $[\LL,\LL]$ is generated by $c=[a,t]$; to prove that $\LL$
  is branched on $[\LL,\LL]$, it suffices to exhibit $c'\in[\LL,\LL]$
  with $\psi(c')=x^2\otimes c$. A direct calculation shows that
  $c'=[[a,t],t]$ will do.

  Clearly $\LL/[\LL,\LL]$ is the commutative algebra $\Bbbk^2$
  generated by $a,t$.
\end{proof}

\begin{thm}
  The Lie algebra $\LL$ is nil, of unbounded nillity, but not nilpotent.
\end{thm}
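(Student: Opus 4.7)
My plan is to address the three claims in order, relying on the machinery already established.

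For \emph{nillity}, I would invoke Corollary~\ref{cor:nil}: it suffices to show that the two generators $a$ and $t$ are bounded and $\ell$-evanescent in $\W(X)$ for a common $\ell$. The element $a$ has only one non-zero coordinate, $a_0=\partial_x$, so it is obviously bounded and $1$-evanescent. For $t$, I would unfold the recursion $\psi(t)=x\otimes a+x^2\otimes t$ uniquely to the coordinates
\[t_0=0,\qquad t_i=\underbrace{x^2\otimes\cdots\otimes x^2}_{i-1}\otimes x\otimes\partial_x\quad(i\ge 1),\]
and then check: \emph{boundedness}, because any element of $\varpi_i^2\subset X^{\otimes i}$ must carry total $\varpi$-weight $\ge 2$, but $\varpi\cdot x^2=x^3X=0$ at each of the first $i-1$ positions and $\varpi^2\cdot x=x^3X=0$ at the last, giving $\|t\|\le 2$ uniformly in $i$; \emph{$1$-evanescence}, by splitting $t_i=b_i\otimes c_i\otimes\partial_x$ with $c_i=x\in X$, whose degree $1$ is strictly below $(p-1)\cdot 1=2$.

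For \emph{unbounded nillity}, the preceding proposition already shows that $\LL$ is regularly branched over $\KK=[\LL,\LL]$, and a direct computation $\psi[a,t]=1\otimes a-x\otimes t\ne 0$ shows $\KK\ne 0$. Since $\LL$ is now known to be nil, every non-zero element of $\KK$ is non-trivially nil, so the hypothesis inside the proof of Proposition~\ref{prop:unboundednil} is satisfied and it produces elements of $\LL$ of arbitrarily high nillity.

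For \emph{not nilpotent}, I would argue by infinite-dimensionality. Regular branching yields an embedding $X\otimes\KK\hookrightarrow\psi(\KK)\cong\KK$; as $\dim X=3$ and $\KK\ne 0$, this forces $\KK$, hence $\LL$, to be infinite-dimensional. However, any finitely generated nilpotent Lie algebra over a field is finite-dimensional, since its successive lower-central-series quotients $\LL^k/\LL^{k+1}$ are finitely generated and only finitely many are non-zero. As $\LL$ is generated by $\{a,t\}$, this contradicts nilpotence. The only non-routine step in the whole plan is the explicit coordinate description of $t$ and the componentwise verification of boundedness and evanescence; fortunately the linearity of the defining recursion in $t$ makes the bounds uniform in $i$, avoiding any real combinatorial obstacle.
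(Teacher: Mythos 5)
Your overall architecture matches the paper's (Corollary~\ref{cor:nil} for nillity, Proposition~\ref{prop:unboundednil} for unbounded nillity), but your nillity step takes a shortcut that the paper deliberately avoids, and this is where the gap lies. You apply Corollary~\ref{cor:nil} to the generating set $\{a,t\}$ of $\LL$ itself, declaring $a=\partial_x$ ``obviously bounded and $1$-evanescent''. By the letter of the definition this is true (every condition on the coordinate $a_0=\partial_x$ is vacuous or automatic), but the engine behind the corollary, Lemma~\ref{lem:evanescent2}, does not cover such elements: its proof asserts that every homogeneous component $h=x_1\otimes\cdots\otimes x_n\otimes\partial_x$ has a sub-maximal-degree factor among its last $\ell$ alphabet variables and deduces $\deg(h)\ge(p-1)/(\lambda-1)>1$, which fails for the component $h=\partial_x$ (there are no alphabet variables, and $\deg(h)=1$ exactly). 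This is precisely why the paper applies Corollary~\ref{cor:nil} only to the codimension-one ideal $\langle t\rangle$ --- generated by $t$, $[t,a]$, $[t,a,a]$, all with zero image in $\Der X$ --- and then upgrades to $\LL$ separately using $a^3=\partial_x^3=0$: if $b=\alpha a+i$ with $i$ in the nil ideal, Jacobson's formula gives $b^3\in\alpha^3a^3+\langle t\rangle=\langle t\rangle$, so $b$ is nil. The same pattern recurs in every example of the paper. Your route is salvageable --- the weaker bound $\deg(h)\ge1$ for the $\Der X$-component still forces components of $a^{3^s}$ to have degree $\ge3^s$, and $3^s$ eventually beats $\lambda^j$ in the final inequality of Lemma~\ref{lem:evanescent2} --- but as written you invoke the corollary beyond what its proof establishes; either patch that lemma explicitly or follow the ideal-plus-$a^3=0$ route. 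Your coordinate description of $t$ and the verification of $\|t\|\le2$ and of $1$-evanescence are correct, and usefully make explicit what the paper leaves implicit.

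The other two parts are sound. Unbounded nillity is handled exactly as in the paper, with the welcome extra check that $\KK=[\LL,\LL]\ne0$ and that its elements are nil, so the hypothesis opening the proof of Proposition~\ref{prop:unboundednil} is met. For non-nilpotence you replace the paper's appeal to Proposition~\ref{prop:notPI} (which routes through weakly branched associative algebras and an external theorem) by the elementary observation that $X\otimes\KK\le\psi(\KK)$ with $\psi$ injective forces $\dim\KK\in\{0,\infty\}$, hence $\LL$ is infinite-dimensional, while a finitely generated nilpotent Lie algebra is finite-dimensional. That argument is self-contained and, to my mind, preferable here.
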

\begin{proof}
  The ideal $\langle t\rangle$ in $\LL$ has codimension $1$; and it is
  generated by $t,[t,a],[t,a,a]$. Each of these elements is bounded
  and $1$-evanescent; Corollary~\ref{cor:nil} applies. Then $\LL$
  itself is nil, because $a^3a=0$.

  That the nillity in unbounded follows from
  Proposition~\ref{prop:unboundednil}. Clearly $\LL$ is not nilpotent,
  since by Proposition~\ref{prop:notPI} it is not even PI.
\end{proof}

\begin{prop}
  The relative Hausdorff dimension of $\LL$, with $\mathscr
  P=\Bbbk\partial_x$, is
  \[\Hdim_{\mathscr P}(\LL)=\frac49.\]
\end{prop}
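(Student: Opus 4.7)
The strategy is to apply the recurrence from the proof of Proposition~\ref{prop:hdimrat} to the regular branching of $\LL$ over $\KK=[\LL,\LL]$ (just established). Since $\dim X = 3$, $\dim\mathscr P = 1$, and $\dim(\LL/\KK) = 2$, one has $\Hdim_{\mathscr P}(\LL) = 2\alpha$ where $\dim\LL_n = \alpha\cdot 3^n + \beta$ for $n$ large, with $\beta = 2 - \tfrac12\dim(\psi(\KK)/(X\otimes\KK))$. Everything thus reduces to (i)~computing $\dim(\psi(\KK)/(X\otimes\KK))$ and (ii)~anchoring the recurrence by one direct evaluation of $\dim\LL_n$.

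For (i), I use that $\KK\subseteq\ker\pi$ implies $\psi(\KK)\subseteq X\otimes\LL$, and study the composite $\phi:\KK\to X\otimes(\LL/\KK)$ obtained by projecting the second factor; since $X\otimes\KK\subseteq\psi(\KK)$ by regular branching, $\phi(\KK)\cong\psi(\KK)/(X\otimes\KK)$. A direct calculation from $\psi(t)=x\otimes a+x^2\otimes t$ gives $\phi(c) = 1\otimes\bar a + 2x\otimes\bar t$. The key structural observation is $\phi([a,u]) = (\partial_x\otimes 1)\phi(u)$, coming from the derivation action of $\psi(a) = \partial_x$ on the first tensor factor, while $\phi([t,u]) = \phi([w,u]) = 0$ for $u,w\in\KK$: the vanishing follows because $\psi(t)$ and $\psi(w)$ both lie in $X\otimes\LL$, so the second tensor factor of $[\psi(t),\psi(u)]$ or $[\psi(w),\psi(u)]$ lands in $[\LL,\LL]=\KK$. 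Since $\KK$ is the $\LL$-ideal generated by $c$, induction over iterated brackets shows $\phi(\KK)$ is spanned by the $\partial_x$-iterates of $\phi(c)$, namely $\phi(c)$, $\partial_x\phi(c) = -1\otimes\bar t$, and thereafter $0$. Hence $\dim\phi(\KK) = 2$ and $\beta = 1$.

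For (ii), I take $n = 2$: the images of $a$ and $t$ in $\Der(X^{\otimes 2})$ are the derivations $\partial_{x_1}$ and $x_1\partial_{x_2}$, their bracket is $\partial_{x_2}$, and all further iterated brackets and $p$-th powers vanish by direct inspection. Hence $\LL_2$ is three-dimensional with basis $\{a,t,[a,t]\}$; solving $3 = 9\alpha + 1$ gives $\alpha = 2/9$, and therefore $\Hdim_{\mathscr P}(\LL) = 4/9$. The main place requiring care is the vanishing $\phi([t,u]) = \phi([w,u]) = 0$ for $u,w\in\KK$: this is where the abelianness of $\LL/\KK$ combines with $\psi(t)$ lacking a $\partial_x$-component to shut off all contributions beyond $\partial_x^k\phi(c)$.
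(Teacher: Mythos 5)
Your proof is correct and follows essentially the same route as the paper: apply the recurrence of Proposition~\ref{prop:hdimrat} with $\dim(\LL/\KK)=2$, $\dim(\psi\KK/(X\otimes\KK))=2$, and the anchor $\dim\LL_2=3$, yielding $\dim\LL_n=2\cdot3^{n-2}+1$ and hence $4/9$. The only difference is that you justify $\dim(\psi\KK/(X\otimes\KK))=2$ in detail via the map $\phi:\KK\to X\otimes(\LL/\KK)$ and its $\partial_x$-iterates, where the paper simply exhibits the basis $\{[a,t],[a,[a,t]]\}$ as a ``ready computation''; your spanning argument is a valid and welcome elaboration of that step.
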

\begin{proof}
  We follow Proposition~\ref{prop:hdimrat}. We may take $M=2$, and
  readily compute $\dim(\LL/\KK)=2=\dim(\KK/(X\otimes\KK))$, the
  latter having basis $\{[a,t],[a,[a,t]]\}$. Letting $\LL_n$ denote
  the image of $\LL$ in $\Der(X^{\otimes n})$, we find
  $\dim(\LL_2)=3$. This gives
  \[\dim(\LL_n)=2\cdot3^{n-2}+1,\]
  and the claimed result.
\end{proof}

\begin{lem}
  Let algebra $\LL$ is contracting.
\end{lem}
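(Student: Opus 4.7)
The plan is to exploit the $\R_+$-grading on $\LL$ just introduced --- $\deg(a)=1$, $\deg(t)=\sqrt 2$, dilation $\lambda = 1+\sqrt 2 > 1$ --- and to show that the nucleus is $N := \Bbbk a + \Bbbk t$.

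For homogeneous $y \in \LL$ of degree $d$, the first step is to observe that, writing $\psi(y) = \sum_i f_i \otimes b_i + \delta$ with each $f_i$ among the basis vectors $1, x, x^2$ of $X$ (so $\deg(f_i) \in \{0, -1, -2\}$), grading compatibility forces $\deg(b_i) = (d - \deg(f_i))/\lambda \le (d+2)/\lambda$. Iterating $n$ times and telescoping a geometric series will show that every $\LL$-component of $\psi^n(y)$ has degree at most
\[
\frac{d}{\lambda^n} + \frac{2}{\lambda-1}\bigl(1-\lambda^{-n}\bigr),
\]
which tends to $2/(\lambda-1) = \sqrt 2$ as $n \to \infty$.

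Next I will identify $\LL_{\le\sqrt 2}$. The realised degrees of $\LL$ lie in the semigroup $\{m + n\sqrt 2 : m,n \in \Z_{\ge 0},\, m+n \ge 1\}$, whose only elements at most $\sqrt 2$ are $1$ and $\sqrt 2$, realised respectively by $\Bbbk a$ and $\Bbbk t$. In fact no nonzero element of $\LL$ has degree strictly between $\sqrt 2$ and $1+\sqrt 2$: the only semigroup candidate is $2$, which is not realised since $[a,a]=0$ and the smallest restricted-power contribution $a^3$ has degree $3$. Combining with the previous paragraph, for every homogeneous $y \in \LL$ there exists $n_0$ such that $\psi^n(y) \in X^{\otimes n}\otimes N + \Der(X^{\otimes n})$ for all $n \ge n_0$; by decomposing arbitrary elements into finite homogeneous sums, the contracting property with nucleus $N$ follows for every $y\in\LL$.

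The main obstacle, conceptually, is verifying the absence of realised degrees in the open gap $(\sqrt 2, 1+\sqrt 2)$; this reduces to a short direct inspection using $[a,a]=[t,t]=0$ and the degrees of the restricted $p$-th powers $a^3, t^3$. Everything else is routine telescoping of the geometric bound above.
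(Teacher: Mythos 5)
Your argument is correct, but it reaches the conclusion by a genuinely different route from the paper. The paper's proof is the one-line assertion that the nucleus is $\Bbbk\{a,t\}$, which is implicitly certified by the finite verification criterion stated earlier (the lemma preceding the nucleus algorithm): with $S=N=\Bbbk a+\Bbbk t$ one checks directly that $\psi([a,t])=1\otimes a+2x\otimes t$ lies in $X\otimes N$ and that the cubes of elements of $N+S$ contribute nothing new, a finite computation that uses no grading at all. You instead run the degree estimate of~\eqref{eq:contrub} in reverse: compatibility of $\psi$ with the $\Z[\lambda]/(\lambda^2-2\lambda-1)$-grading forces every $\LL$-component of $\psi^n(y)$ to be homogeneous of degree at most $d/\lambda^n+\sqrt2\,(1-\lambda^{-n})$, and since the only realised degrees below $2$ are $1$ and $\sqrt2$, with homogeneous components exactly $\Bbbk a$ and $\Bbbk t$, contraction follows as soon as this bound drops below $2$. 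Your telescoping is correct, and the degree-gap analysis is the only example-specific input; note that you only need the open interval $(\sqrt2,2)$ to contain no realised degree, which is automatic from the semigroup $\{m+n\sqrt2\}$, so the verification that $2$ itself is unrealised (via $[a,a]=0$ and $\deg a^3=3$) is a harmless but unnecessary extra. What your approach buys is generality and a quantitative rate $n_0\approx\log_\lambda(d)$: it proves contraction for any compatibly graded self-similar Lie algebra with dilation $\lambda>1$, negatively graded finite-dimensional alphabet, and finite-dimensional span of low-degree homogeneous components. What it costs, relative to the paper's finite check, is the tacit reliance on the fact that the grading genuinely descends to $\LL$ (i.e.\ that the kernel of the action on $T(X)$ is a graded ideal), and on every element being a finite sum of homogeneous ones.
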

\begin{proof}
  Its nucleus is $\Bbbk\{a,t\}$.
\end{proof}

\begin{cor}
  The Gelfand-Kirillov dimension of $\LL$ is $\log3/\log\lambda$.
\end{cor}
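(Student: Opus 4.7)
The plan is to obtain matching upper and lower bounds on $\GKdim(\LL)$ by applying the two growth propositions established earlier in the paper, namely Proposition~\ref{prop:GKupper} and Proposition~\ref{prop:GKlower}. Both hypotheses have essentially been verified in the preceding discussion, so the proof amounts to collecting the ingredients and identifying $\dim X=3$ together with the correct value of $\lambda$.

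First I would record that $\LL$ is $\Z[\lambda]/(\lambda^2-2\lambda-1)$-graded with $\lambda$ the largest positive root of $\lambda^2-2\lambda-1$, so $\lambda=1+\sqrt2>1$, and that the generators $a,t$ have positive degrees $1$ and $\sqrt2$ respectively. Thus $\LL$ is an $\R_+$-graded, finitely generated self-similar Lie algebra whose generators all have strictly positive degree, and by the lemma just proved it is contracting. Proposition~\ref{prop:GKupper} then immediately yields
\[\GKdim(\LL)\le\frac{\log(\dim X)}{\log\lambda}=\frac{\log3}{\log\lambda}.\]

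For the lower bound I would invoke the branching result proved just above: $\LL$ is regularly branched over $[\LL,\LL]$, and in particular it is regularly weakly branched. Since the grading hypothesis of Proposition~\ref{prop:GKlower} is the same as above (dilation $\lambda>1$), that proposition applies and gives
\[\GKdim(\LL)\ge\frac{\log(\dim X)}{\log\lambda}=\frac{\log3}{\log\lambda}.\]
Combining the two bounds yields the claimed equality. There is no real obstacle: everything needed, namely contraction, positive grading, and regular branching, was verified in the previous statements of this subsection, so the corollary is essentially a bookkeeping application of Propositions~\ref{prop:GKupper} and~\ref{prop:GKlower} with $\dim X=3$ and $\lambda=1+\sqrt2$.
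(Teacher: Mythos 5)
Your proof is correct and follows exactly the paper's route: the paper's proof is the one-line ``This follows immediately from Propositions~\ref{prop:GKupper} and~\ref{prop:GKlower},'' and you have simply spelled out the hypothesis-checking (contraction, positive grading with dilation $\lambda=1+\sqrt2>1$, and regular branching over $[\LL,\LL]$) that the paper leaves implicit from the preceding statements of that subsection.
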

\begin{proof}
  This follows immediately from Propositions~\ref{prop:GKupper}
  and~\ref{prop:GKlower}.
\end{proof}

In fact, thanks to Theorem~\ref{thm:isogs}, a much stronger result holds:
\begin{prop}[\cite{bartholdi:lcs}*{Corollary~3.9}]
  Set $\alpha_1=1$, $\alpha_2=2$, and
  $\alpha_n=2\alpha_{n-1}+\alpha_{n-2}$ for $n\ge 3$.  Then, for
  $n\ge2$, the dimension of the degree-$n$ component of $\LLgs$ is the
  number of ways of writing $n-1$ as a sum
  $k_1\alpha_1+\cdots+k_t\alpha_t$ with all $k_i\in\{0,1,2\}$.
\end{prop}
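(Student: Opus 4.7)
I would derive an infinite-product formula for the Hilbert series $H(z)=\sum_{n\ge 1}\dim(\LLgs)_n\,z^n$ and then read off the coefficients. The integer grading in the statement is the one supplied by Theorem~\ref{thm:isogs}, which identifies $\LLgs$ with the restricted Lie ring associated to the lower $3$-central series of the Gupta-Sidki group; I would import that identification as a black box.

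Using the facts established above that $\LLgs$ is recurrent, contracting with nucleus $\{a,t\}$, and regularly branched over $\KK=[\LLgs,\LLgs]$ of codimension $2$, I would iterate the self-similarity structure to decompose each homogeneous piece. Every homogeneous element of $\KK$ comes via $\psi$ from a lower-degree element of $\LLgs$ tensored with exactly one of $1,x,x^2$; translated into a recursion on Hilbert series --- after the bookkeeping needed to pass from the intrinsic $\Z[\lambda]/(\lambda^2-2\lambda-1)$-grading to the integer grading of Theorem~\ref{thm:isogs} --- this yields
\[H(z)\;=\;z\prod_{k\ge 1}\bigl(1+z^{\alpha_k}+z^{2\alpha_k}\bigr),\]
where the exponents $\alpha_k$ record the integer degrees of the iterates of $t$ under $\psi$. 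The recursion $\alpha_k=2\alpha_{k-1}+\alpha_{k-2}$ with $\alpha_1=1$, $\alpha_2=2$ follows directly from the two grading equations $\deg(t)=-1+\lambda\deg(a)$ and $\deg(t)=-2+\lambda\deg(t)$, since these force the integer-degree characteristic polynomial of $\psi$ on the nucleus to be $T^2-2T-1$. Expanding the product exhibits the coefficient of $z^{n-1}$ in $H(z)/z$ as the number of $(k_1,\dots,k_t)$ with $k_i\in\{0,1,2\}$ and $n-1=\sum k_i\alpha_i$, which is the claimed formula.

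The main obstacle is producing a concrete basis of $\LLgs$ indexed by these representations, from which the product decomposition of $H(z)$ follows mechanically. Given a representation $n-1=\sum k_i\alpha_i$, one assembles an element of $\LLgs$ by iterated use of $\psi$, treating each $x^{k_i}\otimes(\cdot)$ as the ``digit'' at depth $k$; linear independence of the resulting family reduces, via the faithful action on $T(X)$ and the recurrent property, to an inductive tracking of leading terms in reverse-lexicographic order on monomials of $T(X)$, much as in the proof of Lemma~\ref{lem:transitive} and the weakly branched arguments above.
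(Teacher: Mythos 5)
You should first be aware that the paper does not actually prove this proposition: it is imported wholesale from \cite{bartholdi:lcs}*{Corollary~3.9}, and the remark ``thanks to Theorem~\ref{thm:isogs}'' points to where the work is really done. There, the explicit basis of $\LL^{\mathbb F_3}(G)$ is recalled: the elements $W(c)$ and $W(u)$ with $W=X_1\dots X_t$ have integer degrees $1+\sum X_i\alpha_i+\alpha_{t+1}$ and $1+\sum X_i\alpha_i+2\alpha_{t+1}$ respectively, and these are in visible bijection with the nonzero representations $n-1=\sum k_i\alpha_i$, $k_i\in\{0,1,2\}$ (the last nonzero digit $k_t\in\{1,2\}$ selecting $c$ or $u$). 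Your combinatorial skeleton --- the product $\prod_k(1+z^{\alpha_k}+z^{2\alpha_k})$, the Pell recursion for the $\alpha_k$ coming from $\lambda^2=2\lambda+1$, and the passage from the $\Z[\lambda]/(\lambda^2-2\lambda-1)$-grading to the integer grading --- is correct and matches this count exactly.

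The genuine gap is that your Hilbert-series identity is asserted rather than derived. Recurrence, contraction and regular branching, which is all you invoke, yield only inequalities: $X\otimes\KK\le\psi(\KK)$ is an inclusion (whence the lower bound of Proposition~\ref{prop:GKlower}), and the nucleus gives the upper bound of Proposition~\ref{prop:GKupper}; neither pins down the exact per-degree dimension, so the step ``translated into a recursion on Hilbert series'' does not follow from the properties you cite. The product formula is equivalent to the assertion that $a$, $t$ and the monomials $x^{i_1}\otimes\cdots\otimes x^{i_t}\otimes b$ with $b\in\{[a,t],[a,[a,t]]\}$ form a basis of $\LLgs$, and you explicitly defer this, then address only linear independence (which the faithful action on $T(X)$ does handle, as you say). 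What is entirely missing is the spanning half, i.e.\ the upper bound on $\dim(\LLgs)_n$: a closure computation showing that brackets of these monomials stay in their span. Concretely, one must check a finite list of identities such as $\psi([a,[a,t]])=-1\otimes t$, $[t,[a,t]]=x^2\otimes[a,t]$, $[[a,t],[a,[a,t]]]=-1\otimes[a,t]$, and then propagate them through the bracket formulas of \S\ref{ss:full}; only after this does the count of degrees become the dimension count. Until that is done, your argument establishes only that the number of representations of $n-1$ is a lower bound for $\dim(\LLgs)_n$.
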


The Gupta-Sidki Lie algebra generalizes to arbitrary characteristic
$p$, with now $\psi(t)=x\otimes a+x^{p-1}\otimes t$.  We will explore
in~\S\ref{ss:gsgp} the connections between $\LLgs$ and the Gupta-Sidki
group.

\subsection{The Grigorchuk Lie algebra}\label{ss:grla}
Again inspired by the self-similarity structure~\eqref{eq:ggp}, we
consider $X=\Ft[x]/(x^2)$, a Lie algebra $\LLg$, and a
restricted Lie algebra $\LLtg$. Both are generated by $a,b,c,d$, and
have the same self-similarity structure (denoting both algebras by $\LL$)
\begin{equation}\label{eq:grla}
  \psi:\begin{cases}
  \LL\to\LL\wr\Der X\\
  a\mapsto\partial_x\\
  b\mapsto x\otimes(a+c)\\
  c\mapsto x\otimes(a+d)\\
  d\mapsto x\otimes b.
\end{cases}
\end{equation}
We seek a grading for $\LL$ that makes the generators
homogeneous. Again $X$ is $\Z$-graded, with $\deg(x)=-1$ so
$\deg(a)=1$, and $\deg(b)=\deg(c)=\deg(d)=\deg(a)=1$, so
$\lambda=2$. In other words, the Lie algebra $\LL$ is no more than
$\Z$-graded.  Using our matrix notation:
\[a\mapsto\begin{pmatrix} 0 & 1\\ 0 & 0\end{pmatrix},\quad
b\mapsto\begin{pmatrix} 0 & 0\\a+c & 0\end{pmatrix},\quad
c\mapsto\begin{pmatrix} 0 & 0\\a+d & 0\end{pmatrix},\quad
d\mapsto\begin{pmatrix} 0 & 0\\b & 0\end{pmatrix}.\]

Contrary to~\S\ref{ss:gsla}, there are important differences between
the algebras $\LLg$ and $\LLtg$. Their relationship is as follows:
$\LLtg$ is an extension of $\LLg$ by the abelian algebra
$\Bbbk\{1^{\otimes n}\otimes [a,b]^2\}$. In this~\S, we denote $\LLg$
by $\LL$.

We note that $\LL$ is not recurrent. However, let us define$
e=a+c,f=a+d$, and set $\LL'=\langle b,e,f,f^2\rangle$.
\begin{prop}
  $\LL'$ is an ideal of codimension $1$ in $\LL$. It is recurrent,
  transitive, and regularly branched on its ideal
  $\KK=\langle[b,e]\rangle$ of codimension $3$.
\end{prop}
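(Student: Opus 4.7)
The plan is to verify each of the four assertions (ideal of codimension $1$, recurrent, transitive, regularly branched on $\KK$ of codimension $3$) by explicit computation with the self-similarity recursion \eqref{eq:grla}, working throughout in characteristic $2$.

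First, for the ideal property I compute $[a,g]$ for each generator $g\in\{b,e,f,f^2\}$. Since $\psi([b,c])=[x\otimes e,\,x\otimes f]=x^2\otimes[e,f]=0$ we have $[b,c]=0$, whence $[b,e]=[b,a]+[b,c]=[a,b]$ (characteristic $2$), so $[a,b]\in[\LL',\LL']\subseteq\LL'$. For $[a,e]$ and $[a,f]$ one computes $\psi([a,e])=1\otimes f$ and $\psi([a,f])=1\otimes b$; the restricted squares satisfy $\psi(e^2)=1\otimes f$ and $\psi(f^2)=1\otimes b$ (using $\partial_x^2=0$ and $x^2=0$ in the formula $(u+v)^{[2]}=u^{[2]}+v^{[2]}+[u,v]$), so by faithfulness $[a,e]=e^2\in\LL'$ and $[a,f]=f^2\in\LL'$. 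Finally $\psi([a,f^2])=[\partial_x,\,1\otimes b]=0$ yields $[a,f^2]=0$. For codimension $1$, I use the relation $b=e+f$ (equivalently $b+c+d=0$ in $\LL$, which follows from $\psi(b+c+d)=x\otimes(b+c+d)$ forcing the zero action on $T(X)$), so $\LL_1=\Bbbk\{a,b,c\}$ has dimension $3$ while $\LL'_1=\Bbbk\{e,f\}$ has dimension $2$; in higher degrees $\LL_{\ge 2}\subseteq[\LL,\LL]\subseteq\LL'$.

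For recurrence, the elements $[a,b],[a,e],[a,f]\in\ker\pi\cap\LL'$ have respective $\psi$-images $1\otimes e,\,1\otimes f,\,1\otimes b$, so $(\varepsilon\otimes1)\psi$ surjects onto $\{b,e,f\}$, hence onto all of $\LL'$. Transitivity then follows from Lemma~\ref{lem:transitive}, since $e\cdot x=1$ yields $\U(\LL')\cdot\theta=X$. For regular branching on $\KK=\langle[b,e]\rangle=\langle[a,b]\rangle$, the key computations are
\[\psi([f,[a,b]])=[\partial_x+x\otimes b,\,1\otimes e]=x\otimes[b,e]=x\otimes[a,b],\]
\[\psi([f,[f,[a,b]]])=[\partial_x+x\otimes b,\,x\otimes[a,b]]=1\otimes[a,b],\]
giving $\{1\otimes[a,b],\,x\otimes[a,b]\}\subseteq\psi(\KK)$; since $[a,b]$ ideal-generates $\KK$ in $\LL'$, the inclusion $X\otimes\KK\subseteq\psi(\KK)$ propagates. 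For codimension $3$, compute $[e,f]=[a,e]+[a,f]=e^2+f^2=[a,e+f]=[a,b]\in\KK$, forcing $e^2\equiv f^2\pmod\KK$; the quotient $\LL'/\KK$ is then spanned by the classes of $e,f,f^2$, and routine checks show these are linearly independent.

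The main technical subtlety is the identification of $[a,e]$ and $[a,f]$ as elements of the subalgebra $\LL'$: it rests on recognizing the restricted squares $e^2,f^2$ as realizing the brackets $[a,e],[a,f]$ respectively. Without the generator $f^2$ (and the implicit $e^2$ arising from restricted closure) the Lie subalgebra $\langle b,e,f\rangle$ would fail to be an ideal in $\LL$.
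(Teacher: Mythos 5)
Your argument follows the same route as the paper's, but fills in considerably more: the paper's proof never addresses the ideal or codimension-one claims, whereas you verify $[a,b]=[b,e]$, $[a,e]=e^2$, $[a,f]=f^2$ and $[a,f^2]=0$ explicitly, and this identification of the $\ad(a)$-images with the restricted squares is precisely what makes $\langle b,e,f,f^2\rangle$ an ideal. Your branching computation is also correct where the paper's displayed element is not: the paper asserts $\psi[[b,e],b]=x\otimes[b,e]$, but $[[b,e],b]=[1\otimes e,\,x\otimes e]=x\otimes[e,e]=0$; the elements that work are $[[b,e],e]$ or, as you have it, $[f,[b,e]]$. Your additional computation $\psi([f,[f,[a,b]]])=1\otimes[a,b]$ usefully supplies the $1\otimes\KK$ half of $X\otimes\KK$ directly, rather than leaving it to the recurrence machinery.

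The one step that does not hold up is the conclusion of the recurrence argument: ``$(\varepsilon\otimes1)\psi$ surjects onto $\{b,e,f\}$, hence onto all of $\LL'$.'' That inference needs $\LL'=\langle b,e,f\rangle$, which contradicts your own (correct) closing observation that $f^2$ is an indispensable extra generator: the degree-$2$ component of the plain Lie algebra $\langle b,e\rangle$ is spanned by $[b,e]=[a,b]$ alone, while $\LL'$ also contains the independent element $f^2=[a,d]$. So you still owe a preimage of $f^2$ under $(\varepsilon\otimes1)\psi$ restricted to $\ker\pi\cap\LL'$, and in the unrestricted algebra $\LLg$ none exists: the degree-$4$ part of $\LLg$ is spanned by $1\otimes[a,b]$ together with elements of $x\otimes\LL$, so no element of $\ker\pi$ has first co\"ordinate $[a,d]$ (the missing element $1\otimes[a,c]=1\otimes1\otimes f$ is exactly the central piece present only in $\LLtg$). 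The paper's own proof hides the same point behind the assertion that ``$\LL'$ is $2$-generated,'' which is true only in the restricted sense; so the defect is inherited from the source, but it should be flagged rather than elided, since as literally defined the recurrence of $\LL'\le\LLg$ is what your ``hence'' is claiming.
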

\begin{proof}
  First, $\LL'$ is recurrent: indeed $(\varepsilon\otimes1)\psi f^2=b$
  and $(\varepsilon\otimes1)\psi[b,e]=e$; note the relation $b+e+f=0$,
  so $\LL'$ is $2$-generated. By Lemma~\ref{lem:transitive}, $\LL'$ is
  transitive.

  To show that $\LL'$ is branched on $\KK$, it suffices to note
  $\psi[[b,e],b]=x\otimes[b,e]$, so $\psi(\KK)$ contains
  $X\otimes\KK$. Finally, $\LL'/\KK$ has basis $\{b,e,e^2\}$, as a
  direct calculation shows.
\end{proof}

We note, however, that the corresponding subalgebra ${}_2\LL'$ is not
regularly branched on the restricted ideal
${}_2\KK=\langle[b,e]\rangle$; indeed, as we noted above, ${}_2\KK$
contains $1^{\otimes n}\otimes[b,e]^2$ for all $n\in\N$, yet does not
contain $x\otimes[b,e]^2$.

\begin{thm}
  The Lie algebra $\LLtg$ is nil, of unbounded nillity, but not
  nilpotent.
\end{thm}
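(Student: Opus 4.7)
The plan is to mirror the structure of the argument for $\LLgs$: isolate a codimension-one nil ideal in $\LLtg$, bootstrap to all of $\LLtg$ using the fact that $a^{[2]}=0$, and read off unbounded nillity and non-PI-ness from the (weakly) branched structure of $\LL'$.

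\emph{Nillity.} Write $\LLtg=\Bbbk a+\LL'$. Since $\psi(a)=\partial_x$ and $\partial_x^{[2]}=0$ on $X=\Ft[x]/(x^2)$, we have $a^{[2]}=0$; and Jacobson's $p$-formula gives $(\alpha a+y)^{[2]}=y^{[2]}+\alpha[a,y]\in\LL'$ whenever $y\in\LL'$, because $\LL'$ is a restricted ideal. Hence once $\LL'$ is nil, every element of $\LLtg$ is nil by iterating the restricted square. Proving $\LL'$ nil is the delicate step. A naive appeal to Corollary~\ref{cor:nil} fails: in characteristic two the evanescence bound degenerates to $\deg(c_i)<(p-1)\ell=\ell$, and the natural generators $b,e,f$ fail it because the cycle $b\to c\to d\to b$ produces coordinates $x^{\otimes n}\otimes\partial_x$ of maximal degree in every slot. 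One way forward is to analyse nillity directly through the matrix recursion: the key computations $\psi(b)=x\otimes e$ and $x^2=0$ force $b^{[2]}=0$, while $\psi([b,a])=1\otimes e$, $\psi(f^{[2]})=1\otimes b$ and $\psi(e^{[2]})=1\otimes f$ show that iterated $p$-powers and single brackets with $a$ systematically insert a harmless $1$ in the leading slot, pushing the ``active'' coordinate of any element deeper into the tree at each step. A careful finite induction of this type, combined with the contracting nature of $\LL'$, then shows that for each $y\in\LL'$ enough iterations of the restricted square land in $0$.

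\emph{Unbounded nillity.} Apply Proposition~\ref{prop:unboundednil} to the regular branching of $\LL'$ over $\KK=\langle[b,e]\rangle$. The element $[b,e]\in\KK$ has $\psi([b,e])=1\otimes e=1\otimes\partial_x+1\otimes c$, so it simultaneously supplies a non-trivial nil element of $\KK$ (it is nil by the preceding step) and an element of the form $1^{\otimes 1}\otimes\partial_x+$ higher terms required by the proposition. The recursive sequence $k_0=[b,e]$, $k_{n+1}=\theta\otimes k_n+[b,e]$ then stays in $\KK\subseteq\LLtg$, because the step $\theta\otimes k_n\in\psi(\KK)$ uses only the unrestricted branching $X\otimes\KK\le\psi(\KK)$; the observation that ${}_2\KK$ fails to be $\psi$-invariant is not an obstruction to the construction, only to calling the restricted algebra regularly branched in its own right. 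The restricted nillity index of $k_n$ grows strictly with $n$.

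\emph{Not nilpotent.} The subalgebra $\LL'$ of $\LLtg$ is weakly branched, so by Proposition~\ref{prop:notPI} satisfies no polynomial identity, and neither does $\LLtg$. Any nilpotent Lie algebra of class $c$ satisfies the Engel identity $[x_1,\dots,x_{c+1}]=0$, contradicting non-PI-ness.

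The principal obstacle is the first step. In characteristic two the evanescence inequality is so tight that Corollary~\ref{cor:nil} does not apply off-the-shelf; one must either engineer an $\ell$-evanescent Lie-generating family of $\LL'$ out of bracket and $p$-power combinations that start with a constant $1$, or, equivalently, argue nillity directly from the matrix recursion by a contracting-descent argument. The unbounded-nillity and non-nilpotency parts then fall out almost for free from the already established branched structure.
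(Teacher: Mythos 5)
Your second and third steps coincide with the paper's: unbounded nillity is quoted from Proposition~\ref{prop:unboundednil} (and your remark that the recursion $a_{n+1}=1^{\otimes\ell-1}\otimes\theta\otimes a_n+b$ only uses the \emph{unrestricted} branching containment $X\otimes\KK\le\psi(\KK)$, so that the failure of ${}_2\KK$ to be $\psi$-invariant is not an obstruction, is a worthwhile clarification), and non-nilpotency follows from Proposition~\ref{prop:notPI} exactly as in the paper. Your diagnosis of the first step is also sharp. The paper settles nillity by asserting that $b,c,[b,a],[c,a]$ are bounded and $3$-evanescent and invoking Corollary~\ref{cor:nil}; but expanding the recursion gives $b=(0,\,x\otimes\partial_x,\,x^{\otimes2}\otimes\partial_x,\,0,\,x^{\otimes4}\otimes\partial_x,\dots)$ and $[b,a]=1\otimes(a+c)$ with level-five component $1\otimes x^{\otimes4}\otimes\partial_x$, so in infinitely many coordinates the last $\ell$ letters are all equal to the top-degree monomial $x$, for every $\ell$. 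Thus these elements are not $\ell$-evanescent for any $\ell$ (the terminating branch $x\otimes a$ of the recursion is itself of top degree when $p=2$, unlike the Gupta--Sidki and Petrogradsky--Shestakov--Zelmanov cases), and you are right that Corollary~\ref{cor:nil} does not apply off the shelf.

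The genuine gap is in your replacement argument. The computations $b^{[2]}=0$, $e^{[2]}=1\otimes f$, $f^{[2]}=1\otimes b$ show only that finitely many \emph{generators} are nil, and no appeal to the contracting property can upgrade this to nillity of arbitrary elements: the Fabrykowski--Gupta algebra in the same paper is self-similar, contracting, and generated by $a,t$ with $a^{[p]}=t^{[p]}=0$, yet $a+t$ is not nil because $\psi((a+t)^{[p]})=-1\otimes(a+t)$. The whole difficulty is to control restricted powers of arbitrary sums of iterated brackets, which is precisely what the boundedness/evanescence machinery of Lemmata~\ref{lem:bounded}--\ref{lem:sz} is built for and what your sketch replaces by ``a careful finite induction of this type.'' As written, the nillity of $\langle b,c,d\rangle$ (equivalently of $\LL'$) is not established; closing the gap requires a quantitative substitute for Lemma~\ref{lem:evanescent2} adapted to $p=2$, for instance one exploiting the fact that every third coordinate of each element of this ideal vanishes, rather than the degree deficit in the last $\ell$ letters.
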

\begin{proof}
  The ideal $\langle b,c,d\rangle$ in $\LLtg$ has codimension $1$; and
  it is generated by $b,c,[b,a],[c,a]$. Each of these elements is
  bounded and $3$-evanescent; Corollary~\ref{cor:nil} applies. Then
  $\LLtg$ itself is nil, because $a^2=0$.

  That the nillity in unbounded follows from
  Proposition~\ref{prop:unboundednil}. Clearly $\LLtg$ is not
  nilpotent, since by Proposition~\ref{prop:notPI} it is not even PI.
\end{proof}

\begin{prop}
  The relative Hausdorff dimension of $\LL$ and $\LLtg$, with
  $\mathscr P=\Bbbk\partial_x$, is
  \[\Hdim_{\mathscr P}(\LL)=\frac12.\]
\end{prop}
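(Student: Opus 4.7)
The plan is to apply Proposition~\ref{prop:hdimrat} to the codimension-$1$ recurrent subalgebra $\LL'\trianglelefteq\LL=\LLg$, which the preceding proposition identifies as regularly branched over $\KK=\langle[b,e]\rangle$ with $\dim(\LL'/\KK)=3$. With $\dim X=2$ and $\dim\mathscr P=1$, the recurrence established in the proof of Proposition~\ref{prop:hdimrat} specialises to
\[
  \dim\LL'_n=2\dim\LL'_{n-1}-3+\gamma,\qquad \gamma:=\dim\bigl(\psi\KK/(X\otimes\KK)\bigr),
\]
valid for all $n\ge M$ once $M$ is chosen large enough that $\LL'/\psi^{-1}(X\otimes\KK)\hookrightarrow\LL'_M$ is an isomorphism. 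Its solution takes the form $\dim\LL'_n=\alpha\cdot 2^n+(3-\gamma)$, and only the leading coefficient $\alpha$ matters for the Hausdorff dimension.

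First I would extract $\gamma$ from~\eqref{eq:grla} by inspecting $\psi$ on a basis of $\KK$ and recording which basis elements have $\psi$-image falling outside $X\otimes\KK$. Next, I would compute $\dim\LL'_M$ at a small $M$ (say $M=3$) by running the matrix recursion~\eqref{eq:matext} on the generators $a,b,c,d$ and enumerating the linear span of the resulting endomorphisms of $X^{\otimes M}$. Substituting into $\alpha=(\dim\LL'_M-(3-\gamma))/2^M$ pins down $\alpha=\tfrac12$. Since $\LL/\LL'$ is one-dimensional, $\dim\LL_n-\dim\LL'_n$ is bounded independently of $n$, so the leading behaviour is preserved; combined with $(\dim X-1)/\dim\mathscr P=1$, we conclude $\Hdim_{\mathscr P}(\LL)=\alpha=\tfrac12$.

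For the restricted algebra $\LLtg$, the quotient $\LLtg/\LLg$ is the abelian span of the central restricted squares $1^{\otimes n}\otimes[a,b]^2$ for $n\ge0$, as noted in~\S\ref{ss:grla}. Each such element acts non-trivially on $X^{\otimes m}$ only when $m>n$, contributing at most one additional dimension to $\LLtg_m$ for each $n<m$; hence $\dim\LLtg_m\le\dim\LLg_m+m$, a linear-in-$m$ correction that vanishes after dividing by $2^m$. Thus $\Hdim_{\mathscr P}(\LLtg)=\tfrac12$ as well.

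The main obstacle is the explicit bookkeeping required to confirm $\gamma$ and $\dim\LL'_M$: these are routine but care-intensive matrix computations using~\eqref{eq:grla} and~\eqref{eq:matext}, with no conceptual content beyond the formalism already in place.
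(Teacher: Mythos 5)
Your proposal follows the paper's own route: both invoke Proposition~\ref{prop:hdimrat} and reduce the claim to computing $\gamma=\dim(\KK/(X\otimes\KK))$ (which the paper identifies as $1$, with basis $\{[a,b]\}$) together with the dimension of the image at one small level (the paper finds $\dim\LL_3=7$, yielding $\dim\LL_n=2^{n-1}+3$ and hence $\alpha=\tfrac12$). The only differences are cosmetic and, if anything, slightly more careful than the source: you run the recurrence inside the recurrent ideal $\LL'$ (which is literally regularly branched) and transfer to $\LL$ via the bounded codimension, whereas the paper plugs $\LL$ and $\KK$ (of codimension $4$ in $\LL$) directly into the recurrence; and your linear-in-$n$ correction for $\LLtg$ coming from the central elements $1^{\otimes n}\otimes[a,b]^2$ matches the paper's remark that $\dim((\LLtg)_n)=2^{n-1}+n$.
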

\begin{proof}
  We follow Proposition~\ref{prop:hdimrat}. We may take $M=3$, and
  readily compute $\dim(\LL/\KK)=4$ while $\dim(\KK/(X\otimes\KK))=1$,
  the latter having basis $\{[a,b]\}$. Letting $\LL_n$ denote the
  image of $\LL$ in $\Der(X^{\otimes n})$, we find
  $\dim(\LL_3)=7$. This gives
  \[\dim(\LL_n)=2^{n-1}+3,\]
  and the claimed result.
\end{proof}
Note, on the other hand, that $\dim((\LLtg)_n)=2^{n-1}+n$, by the same
calculation but taking into account the $1^{\otimes n}\otimes[a,b]^2$.

\begin{lem}
  The algebra $\LLg$ is contracting.
\end{lem}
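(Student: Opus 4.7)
The plan is to mimic the one-line argument given just above for $\LLgs$: exhibit a finite-dimensional subspace of $\LLg$ that contains the nucleus, and verify the contraction-test lemma from~\S\ref{ss:ssla}. The natural candidate, which is even the expected nucleus, is $N=\Bbbk\{a,b,c,d\}$, i.e.\ the span of the generators. Since the statement concerns the unrestricted algebra $\LLg$, only the condition $\psi^m(S+N+[N,S])\le X^{\otimes m}\otimes N\oplus\Der(X^{\otimes m})$ needs to be checked; no $p$-power condition enters. I take $S=N$, so it reduces to checking $\psi(N)\subset X\otimes N\oplus\Der X$ and $\psi([N,N])\subset X\otimes N$.

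The first inclusion is immediate from the defining formulas~\eqref{eq:grla}, which show that each of the four generators is sent either to $\Der X$ or to an element of $X\otimes N$. For the bracket part, $[N,N]$ is spanned by the six pairwise brackets. The three involving $a$ are routine: applying $\ad(\partial_x)$ to $x\otimes(a+c)$, $x\otimes(a+d)$, $x\otimes b$ respectively yields $1\otimes(a+c)$, $1\otimes(a+d)$, $1\otimes b$, all manifestly in $X\otimes N$. The three remaining brackets $[b,c],[b,d],[c,d]$ each have image of the form $x^2\otimes(\cdots)$ under $\psi$, which is $0$ in $X\otimes\LLg$ because $x^2=0$ in the alphabet $X=\Ft[x]/(x^2)$.

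This settles the criterion at $m_0=1$. The iteration to larger $m$ is then automatic: once $\psi(N)\subset X\otimes N\oplus\Der X$, induction gives $\psi^m(N)\subset X^{\otimes m}\otimes N\oplus\Der(X^{\otimes m})$ for every $m\ge 1$. The only real ``obstacle'' in the whole argument is the vanishing $x^2=0$, which is precisely what collapses the three \textit{a priori} troublesome brackets $[b,c],[b,d],[c,d]$ and keeps $N$ from having to grow. This fortunate collapse — specific to characteristic $2$ and the choice $X=\Ft[x]/(x^2)$ — is the structural reason the algebra is contracting, and the same mechanism produces the remark, not part of the statement, that the nucleus is in fact exactly $\Bbbk\{a,b,c,d\}$.
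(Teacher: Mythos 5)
Your proof is correct and takes essentially the same route as the paper, whose entire proof is the assertion that the nucleus is $\Bbbk\{a,b,d\}$ --- the same subspace as your $N$, since $c=b+d$ in $\LLg$; you have simply supplied the verification via the contraction-test lemma, correctly identifying the key point that $[b,c],[b,d],[c,d]$ vanish because $x^2=0$ in $X$. Nothing further is needed.
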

\begin{proof}
  Its nucleus is $\Bbbk\{a,b,d\}$.
\end{proof}

\begin{cor}
  The Gelfand-Kirillov dimension of $\LLg$ and $\LLtg$ is $1$.
\end{cor}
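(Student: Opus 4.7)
The plan is to obtain the result by combining Propositions~\ref{prop:GKupper} and~\ref{prop:GKlower}, exactly as in the Gupta-Sidki case. For the upper bound on $\LLg$, all hypotheses of Prop.~\ref{prop:GKupper} are satisfied: the preceding Lemma supplies contraction, the algebra is $\R_+$-graded with dilation $\lambda=2>1$, and it is generated by the positive-degree elements $a,b,c,d$. Since $\dim X=2$, the bound is $\log 2/\log 2 = 1$.

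For the lower bound on $\LLg$, I would apply Prop.~\ref{prop:GKlower} to the codimension-one subalgebra $\LL'$, which was shown to be regularly branched (hence regularly weakly branched) over $\KK$, with the same dilation $\lambda = 2$. This yields $\GKdim(\LL') \ge 1$, and since $\LL'$ sits as a graded subspace in $\LLg$, $\GKdim(\LLg) \ge \GKdim(\LL') \ge 1$. Together with the upper bound, $\GKdim(\LLg) = 1$.

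For $\LLtg$, the lower bound is inherited from the surjection $\LLtg \twoheadrightarrow \LLg$: any generating set of $\LLtg$ projects to one for $\LLg$, so $\GKdim(\LLtg) \ge \GKdim(\LLg) = 1$. The upper bound is the main obstacle, since the preceding Lemma is stated only for $\LLg$. My plan is to verify that $\LLtg$ is contracting with the same nucleus $N = \Bbbk\{a,b,d\}$; the only ``new'' elements are the squares $1^{\otimes k}\otimes[a,b]^2$. A direct calculation with the matrix self-similarity gives $\psi([a,b]) = 1\otimes(a+c)$, whence $\psi([a,b]^2) = 1\otimes(a+c)^2 = 1\otimes[a,c]$ using $a^2 = c^2 = 0$ in characteristic $2$; one further application yields $\psi^2([a,b]^2) = 1^{\otimes 2}\otimes(a+d) \in X^{\otimes 2}\otimes N$, and the same recipe handles each $1^{\otimes k}\otimes[a,b]^2$. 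Once contraction of $\LLtg$ is established, Prop.~\ref{prop:GKupper} yields $\GKdim(\LLtg) \le 1$. An alternative route, bypassing the nucleus check, is to observe that the kernel of $\LLtg \twoheadrightarrow \LLg$ is spanned by elements of degree $2^{n+2}$, contributing only $O(\log d)$ to $\dim(\LLtg)_d$, which does not affect the Gelfand-Kirillov dimension.
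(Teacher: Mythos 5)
Your proof is correct and follows the same route as the paper, which simply cites Propositions~\ref{prop:GKupper} and~\ref{prop:GKlower} without further comment. You in fact supply details the paper leaves implicit --- applying the lower bound to the recurrent, regularly branched subalgebra $\LL'$ (since $\LLg$ itself is not recurrent) and handling $\LLtg$ via the extension by $\Bbbk\{1^{\otimes n}\otimes[a,b]^2\}$ --- and these verifications are accurate.
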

\begin{proof}
  This follows immediately from Propositions~\ref{prop:GKupper}
  and~\ref{prop:GKlower}.
\end{proof}

In fact (see also Theorem~\ref{thm:isogg}), a much stronger result
holds; namely, $\LLg$ and $\LLtg$ have bounded width:
\begin{prop}
  A basis of $\LLg$ is
  \[\{a,b,d,[a,d],x^{i_1}\otimes\cdots\otimes x^{i_n}\otimes[a,b]\mid
  n\in\N,i_k\in\{0,1\}\}.\] The elements $a,b,d$ have degree $1$, the
  element $[a,d]$ has degree $2$, and the element
  $x^{i_1}\otimes\cdots\otimes x^{i_n}\otimes[a,b]$ has degree
  $2^{n+1}-\sum 2^{k-1}i_k$.

  A basis of $\LLtg$ consists of the above basis, with in addition
  elements $1^{\otimes n}\otimes[a,b]^2$ of degree $2^{n+2}$.

  In particular, in $\LLg$ there is a one-dimensional subspace of
  degree $n$ for all $n\ge3$, while in $\LLtg$ there is a
  two-dimensional subspace of degree $n$ for all $n\ge2$ a power of
  two.
\end{prop}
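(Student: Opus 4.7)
The plan is to verify the claim in three passes and then handle the $\LLtg$ extension. First, inclusion: $a, b, d$ and $[a,b], [a,d]$ lie in $\LLg$ by construction; the tensor elements $u_n^{(\mathbf{i})} := x^{i_1} \otimes \cdots \otimes x^{i_n} \otimes [a,b]$ are placed in $\LLg$ by an induction on $n$ that invokes the regular branching $X \otimes \KK \subseteq \psi(\KK)$ established above for $\KK = \langle [a,b]\rangle$. Linear independence is a consequence of the grading: the formula $\deg u_n^{(\mathbf{i})} = 2^{n+1} - \sum_k i_k 2^{k-1}$ shows that the $2^n$ elements in generation $n$ fill the disjoint interval $[2^n+1, 2^{n+1}]$ with one element per integer degree, and the low-degree generators $a, b, d$ (at degree $1$) and $[a,b], [a,d]$ (at degree $2$) are separated by their actions on $X$ and $X^{\otimes 2}$.

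The substantive step is spanning. Let $V$ be the linear span of the claimed basis. Since $\LLg$ is generated by $a, b, d$, it suffices to show $[a, V], [b, V], [d, V] \subseteq V$. A short direct calculation yields the base cases: $[b, d] = 0$, $[a, [a, b]] = [a, [a, d]] = [b, [a, b]] = [d, [a, d]] = 0$, and $[d, [a, b]] = [b, [a, d]] = x \otimes [a, b] = u_1^{(1)}$. For the inductive step, combining $\psi(u_n^{(\mathbf{i})}) = x^{i_1} \otimes u_{n-1}^{(i_2,\ldots,i_n)}$ with the wreath-product bracket rule $[y_1 \otimes g_1, y_2 \otimes g_2] = y_1 y_2 \otimes [g_1, g_2]$ gives $[a, u_n^{(\mathbf{i})}] = u_n^{(0, i_2, \ldots, i_n)}$ when $i_1 = 1$ and zero when $i_1 = 0$, while $[b, u_n^{(\mathbf{i})}] = [d, u_n^{(\mathbf{i})}] = 0$ when $i_1 = 1$ by $x^2 = 0$. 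When $i_1 = 0$ the two remaining brackets become $x \otimes [a + c, u_{n-1}^{(i_2, \ldots)}]$ and $x \otimes [b, u_{n-1}^{(i_2, \ldots)}]$; the inner brackets lie in $V$ by induction, and for $n \ge 2$ their degree $1 + \deg u_{n-1}^{(\cdot)} \ge 4$ forces them to be scalar multiples of a $u$-monomial, so the outer $x$-shift is again a $u$-monomial and stays in $V$.

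For $\LLtg$ one adjoins $1^{\otimes n} \otimes [a, b]^2 = (u_n^{(0, \ldots, 0)})^2$; the formula $(y_1 \otimes \cdots \otimes y_m \otimes \delta)^p = y_1^p \otimes \cdots \otimes y_m^p \otimes \delta^p$ in $\W(X)$ combined with $x^2 = 0$ annihilates every $u_n^{(\mathbf{i})}$ with some $i_k = 1$, so these are the only new $2$-powers. A direct wreath-product computation gives $[a, b]^2 = 1^{\otimes 2} \otimes (a + d)$, which differs from $1 \otimes [a, b] = 1^{\otimes 2} \otimes (a + c)$ by $1^{\otimes 2} \otimes b \ne 0$, proving $[a, b]^2 \notin \LLg$; the same comparison at every level gives one genuine extra dimension at each degree $2^{n+2}$. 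Closure of the enlarged span is then easy: $[a, 1^{\otimes n} \otimes [a, b]^2] = 0$ because the leading tensor coordinate is $1$; the brackets with $b$ and $d$ fold back into the extended basis by the same induction; and $(1^{\otimes n} \otimes [a, b]^2)^2 = 1^{\otimes (n+3)} \otimes b$ decomposes as $u_{n+2}^{(0, \ldots, 0)} + 1^{\otimes (n+1)} \otimes [a, b]^2$ via the identity $b = c + d$ in characteristic $2$.

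The main obstacle is the degree bookkeeping in the inductive bracket calculation: an $x$-shift of a low-degree element like $[a, d]$ need \emph{not} lie in $V$ (for instance $1 \otimes [a, d] = 1^{\otimes 2} \otimes b$ is not in $\LLg$, as the degree-$4$ subspace of $\LLg$ is one-dimensional), so one must use the degree lower bound $\ge 4$ to guarantee that the interior brackets produced by induction consist only of $u$-monomial summands, not of stray $a, b, d$ or $[a, d]$ contributions.
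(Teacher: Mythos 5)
Your argument is correct in substance but takes a genuinely different route from the paper's. The paper gives no direct computation for this proposition: it is meant to be read off from Theorem~\ref{thm:isogg}, i.e.\ from the explicit basis of $\LL^\Ft(G)$ for the Grigorchuk group recalled there from the cited literature (basis $\{a,b,d,[a,d]\}\cup\{W(e),W(e^2)\}$ with $e=[a,b]$), transported through the isomorphism $W(e^s)\mapsto x^{1-X_1}\otimes\cdots\otimes x^{1-X_n}\otimes e^s$ onto $\LLtg$, with $\LLg$ obtained by discarding the $e^2$-terms. You instead verify everything internally to the self-similar algebra: inclusion via the regular branching over $\KK=\langle[a,b]\rangle$, independence via the grading (one basis vector per degree $\ge3$), and spanning by checking stability of the span under bracketing with the generators. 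This yields a self-contained proof independent of the group-theoretic lower-central-series computations, at the cost of not obtaining the identification with $\LL^\Ft(G)$ along the way. Two points to tighten. First, in the inductive spanning step the correct degree threshold is $\ge3$, not $\ge4$: for $n=1$ and $i_1=0$ the inner bracket $[a+c,[a,b]]$ has degree $3$ and equals $x\otimes[a,b]$, a case covered neither by your listed base cases nor by your ``$n\ge2$'' clause as written, although the same one-per-degree argument applies verbatim at degree~$3$. Second, for $\LLtg$ closure under bracketing with generators and squaring of basis vectors is not quite enough to conclude that the enlarged span is a restricted subalgebra; you should also note that brackets of two of the new elements $1^{\otimes m}\otimes[a,b]^2$ with each other fall back into the span (a routine check with the explicit bracket formula in $\W(X)$, since $1^{\otimes m}\otimes[a,b]^2=1^{\otimes m+2}\otimes(a+d)$). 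With these repairs the proof is complete.
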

We note that ${}_2\LL'$, being a $2$-generated restricted Lie algebra,
cannot have maximal class, because it is infinite
dimensional~\cite{riley:rlamc}. It could actually be that the growth
of ${}_2\LL'$ is minimal among infinite restricted Lie algebras over a
field of characteristic $2$.

We will explore in~\S\ref{ss:ggp} the connections between $\LLg$,
$\LLtg$ and the Grigorchuk group.

\subsection{Grigorchuk Lie algebras}\label{ss:gla}
We generalize the previous example $\LL'$ as follows. We fix a field
$\Bbbk$ of characteristic $p$, the alphabet $X=\Bbbk[x]/(x^p)$, and an
infinite sequence $\omega=\omega_0\omega_1\dots\in\mathbb
P^1(\Bbbk)^\infty$. Choose a projective lift $\mathbb
P^1(\Bbbk)\to\Hom(\Bbbk^2,\Bbbk)$, and apply it to $\omega$. Consider
also the shift map $\sigma:\omega_0\omega_1\dots\mapsto\omega_1\dots$.

Define then a Lie algebra $\LL_\omega$ acting on $R(X)$, generated by
$\Bbbk^2$, with (non-self!-)similarity structure
\[\psi:\begin{cases}
  \LL_\omega\to\LL_{\sigma\omega}\wr\Der X\\
  \Bbbk^2\ni a\mapsto x^{p-1}\otimes a+\omega_0(a)\partial_x.
\end{cases}\] It is a $\Z$-graded algebra, with dilation $\lambda=p$
and $\deg(a)=1$ for all $a\in\Bbbk^2$. To see better the connection to
the Grigorchuk example, consider $\Bbbk=\Ft$ and
$\omega=(\omega_0\omega_1\omega_2)^\infty$ with
$\omega_0,\omega_1,\omega_2$ the three non-trivial maps
$\Bbbk^2\to\Bbbk$. The three non-trivial elements of $\Bbbk^2$ are
$b,e,f$ generating the subalgebra $\LL'$ from the previous~\S.  Using
our matrix notation,
\[b\mapsto\begin{pmatrix} 0 & 0\\ e & 0\end{pmatrix},\quad
e\mapsto\begin{pmatrix} 0 & 1\\f & 0\end{pmatrix},\quad
f\mapsto\begin{pmatrix} 0 & 1\\b & 0\end{pmatrix}.\]

We summarize the findings of the previous~\S in this more general
context. Recall from~\cite{caranti-m-n:glamc1} that a graded algebra
$\LL$ is of \emph{maximal class} if it is generated by the
two-dimensional subspace $\LL_1$ and $\dim(\LL_n)=1$ for all
$n\ge2$. In particular, it has Gelfand-Kirillov dimension at most $1$.

\begin{prop}
  The algebra $\LL_\omega$ is branched and of maximal class.\qedhere
\end{prop}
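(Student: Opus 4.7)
Let me choose a basis $\{a_0, a_1\}$ of $\Bbbk^2$ with $\omega_0(a_0)=1$ and $\omega_0(a_1)=0$; this is possible since the projective lift selects a non-zero linear functional. The (non-self)similarity structure then reads
\[\psi(a_0) = x^{p-1}\otimes a_0 + \partial_x, \qquad \psi(a_1) = x^{p-1}\otimes a_1.\]
Since $\LL_\omega$ is generated by the two-dimensional subspace $\LL_1=\Bbbk^2$, showing that $\LL_\omega$ is of maximal class reduces to proving $\dim\LL_n = 1$ for all $n\ge 2$.

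Set $e_1^\omega := a_1$ and $e_n^\omega := (\ad a_0)^{n-1}(a_1)$ for $n\ge 2$. I would prove, by simultaneous induction on $n$ and uniformly over all shifted sequences, that $\LL_n^\omega = \Bbbk e_n^\omega$ for $n \geq 2$. The base case $n=2$ follows from $\psi([a_0,a_1]) = -x^{p-2}\otimes a_1 \neq 0$ together with $[a_i, a_i]=0$, so $\LL_2$ is one-dimensional. For the inductive step, $\LL_{n+1}^\omega = [\LL_1, \LL_n^\omega]$ is spanned by $e_{n+1}^\omega$ and $[a_1, e_n^\omega]$, and the heart of the matter is to identify the latter as a scalar multiple of the former. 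Since $e_n^\omega\in\ker\pi$ for $n\ge 2$, one has $\psi(e_n^\omega) \in X \otimes \LL_{\sigma\omega}$; using the identity $x^{p-1}\cdot f = \varepsilon(f)\,x^{p-1}$ in $X=\Bbbk[x]/(x^p)$,
\[\psi([a_1, e_n^\omega]) = x^{p-1}\otimes[a_1,\,(\varepsilon\otimes 1)\psi(e_n^\omega)]_{\LL_{\sigma\omega}},\]
while a parallel computation decomposes $\psi(e_{n+1}^\omega)=[\psi(a_0),\psi(e_n^\omega)]$ into an $x^{p-1}$-component of the same shape and a derivation contribution $(\partial_x\otimes 1)\psi(e_n^\omega)$. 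The inductive hypothesis applied to $\LL_{\sigma\omega}$ constrains $(\varepsilon\otimes 1)\psi(e_n^\omega)$ to lie in a one-dimensional space, so after matching coefficients, the injectivity of $\psi$ (from faithfulness of the natural action) yields $[a_1, e_n^\omega]\in\Bbbk e_{n+1}^\omega$. Non-vanishing of $e_n^\omega$ is checked by exhibiting explicit vectors in $R(X)$ (e.g.\ $\theta^{\otimes n-1}$) on which it acts non-trivially.

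For branching, take $\KK_\omega := \langle e_2^\omega\rangle = \langle [a_0,a_1]\rangle$. The quotient $\LL_\omega/\KK_\omega$ is abelian and at most two-dimensional, so $\KK_\omega$ has finite codimension. In the present non-self-similar setting the branching condition reads $X \otimes \KK_{\sigma\omega}\subseteq\psi(\KK_\omega)$; the starting identity $\psi([a_0,a_1])=-x^{p-2}\otimes a_1$ together with iterated brackets $[\cdots[[a_0,a_1],a_0],\ldots,a_0]$ produces elements of $\psi(\KK_\omega)$ of every $x$-degree from $0$ to $p-1$. Combining this with recurrence of $\LL_\omega$ (that $(\varepsilon\otimes 1)\psi:\ker\pi\to\LL_{\sigma\omega}$ is onto) and transitivity, one can replace $a_1$ by an arbitrary generator of $\KK_{\sigma\omega}$, filling out all of $X\otimes\KK_{\sigma\omega}$.

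The main obstacle is the proportionality $[a_1, e_n^\omega]\in\Bbbk e_{n+1}^\omega$: because $\psi$ sends $\LL_\omega$ into $\LL_{\sigma\omega}\wr\Der X$ rather than back into $\LL_\omega\wr\Der X$, the induction must be run uniformly over the entire family $\{\LL_{\sigma^k\omega}\}_{k\ge 0}$, with the splitting $\Bbbk^2=\Bbbk a_0\oplus\Bbbk a_1$ chosen coherently at each level against $\omega_k$. Verifying that the derivation contribution $(\partial_x\otimes 1)\psi(e_n^\omega)$ and the $x^{p-1}$-component of $\psi(e_{n+1}^\omega)$ combine into a single direction, and that the resulting proportionality constant never vanishes (including for small primes where $x^{p-2}$ behaves specially), is the delicate technical point.
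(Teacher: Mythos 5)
The paper gives no proof of this proposition at all: the \verb|\qedhere| sits inside the statement, and the surrounding text makes clear the author regards it as a summary of the explicit $p=2$ computation for $\LL'$ in the Grigorchuk subsection together with the identification, made just afterwards, of the $\LL_\omega$ with Caranti--Mattarei--Newman's infinitely iterated inflations, for which maximal class is one of their theorems. Your direct induction is therefore a genuinely different, self-contained route, and its skeleton is sound. One simplification you are entitled to and should use: since everything is homogeneous with $\deg(x)=-1$ and dilation $\lambda=p$, the equation $-j+p\deg(b)=n$ has a unique solution with $0\le j\le p-1$, so $\psi$ carries all of $(\LL_\omega)_n$ into the single slice $x^{\,p\lceil n/p\rceil-n}\otimes(\LL_{\sigma\omega})_{\lceil n/p\rceil}$. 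Your ``matching of coefficients'' then collapses to a dichotomy: if $p\nmid n$ the constant term $(\varepsilon\otimes1)\psi(e_n^\omega)$ vanishes and $[a_1,e_n^\omega]=0$ outright; if $p\mid n$ both $[a_0,e_n^\omega]$ and $[a_1,e_n^\omega]$ land in $x^{p-1}\otimes[\Bbbk^2,b_0]\subseteq x^{p-1}\otimes(\LL_{\sigma\omega})_{n/p+1}$, and it is the one-dimensionality of this \emph{target} (supplied by the induction run over the shifted sequences), not of the space containing $b_0$, that you must invoke --- at $n=p$ the constant term lies in the two-dimensional $(\LL_{\sigma\omega})_1$, so your stated reason does not apply there.

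The genuine gap is in the non-vanishing half, which you dispatch in one sentence. First, your witnesses fail: $\psi(e_2^\omega)=-x^{p-2}\otimes a_1$ sends $\theta^{\otimes n}$ to $-x^{2p-3}\otimes(a_1\cdot\theta^{\otimes n-1})$, which is zero for every $p$ (for $p\ge3$ because $x^{2p-3}=0$ in $X$; for $p=2$ because $a_1\cdot(\theta\otimes w)=x^2\otimes\cdots=0$); one must test against vectors such as $1^{\otimes k}\otimes x\otimes\cdots$. Second, and more seriously, unwinding the recursion gives $a_1\cdot(x_1\otimes\cdots\otimes x_n)=x_1x^{p-1}\otimes\cdots\otimes x_{n-1}x^{p-1}\otimes\omega_{n-1}(a_1)\,\partial_x(x_n)$, so $a_1$ acts nontrivially on $T(X)$ if and only if $\omega_k(a_1)\neq0$ for some $k$. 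If $\omega$ is eventually constant, the kernel vector of the constant tail annihilates all of $T(X)$, hence is zero in the faithful quotient; then $(\LL_{\sigma^k\omega})_1$ is one-dimensional for large $k$ and $\LL_\omega$ is finite-dimensional --- neither branched nor of maximal class. So the proposition is false as stated without a hypothesis on $\omega$ (no constant tail), a hypothesis the paper omits as well; your non-vanishing step cannot be closed without locating and using it. (A related irritant you inherit from the paper: the displayed matrices for the Grigorchuk specialization, $b\mapsto x\otimes e$ and so on, do not agree with the literal formula $a\mapsto x^{p-1}\otimes a+\omega_0(a)\partial_x$, which would give $b\mapsto x\otimes b$; your computation, like mine above, follows the literal formula.)
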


The construction of $\LL_\omega$ is modelled on that of the Grigorchuk
groups $G_\omega$ introduced in~\cite{grigorchuk:pgps}. We detail now
the connection to the algebras of maximal class studied by Caranti
\emph{et al}, and recall their definition of
\emph{inflation}~\cite{caranti-m-n:glamc1}*{\S6}. Let $\LL$ be a Lie
algebra of maximal class, and let $\mathscr M$ be a codimension-$1$
ideal, specified by $\omega\in\mathbb P^1(\Bbbk)$: $a\in\LL$ belongs
to $\mathscr M$ if and only if $\omega(a_1)=0$, where $a_1$ denotes
the degree-$1$ part of $a$.

Choose $s\in\LL\setminus\mathscr M$; then $s$ acts as a derivation on
$\mathscr M$. The corresponding \emph{inflated} algebra is
${}^\LL\!\!\!\mathscr M=\mathscr
M\otimes\Bbbk[\varepsilon]/(\varepsilon^p)\rtimes\Bbbk$, where $\Bbbk$
acts by the derivation
$s'=1\otimes\partial_\varepsilon-s\otimes\varepsilon^{p-1}$. Note
$(s')^p=s$. They show that ${}^\LL\!\!\!\mathscr M$ is again an algebra of
maximal class. The main results
of~\cites{caranti-n:glamc2,jurman:glamc3} are that every
infinite-dimensional Lie algebra of maximal class is obtained through
a (possibly infinite) number of steps from an elementary building
blocks such as the Albert-Franks algebras.

In fact, ${}^\LL\!\!\!\mathscr M$ may also be described as the subalgebra of
$\LL\wr\Bbbk\partial_\varepsilon$ generated by $\mathscr M$ and
$s'$. The algebra ${}^\LL\!\!\!\mathscr M$ is independent (up to
isomorphism) of the choice of $s$, and therefore solely depends on the
choice of $\omega$.

The algebras $\LL_\omega$ presented in this~\S\ are examples of Lie
algebras of maximal class that fall into the ``infinitely iterated
inflations'' subclass~\cite{caranti-m-n:glamc1}*{\S9}. However, they
do not appear here as inverse limits, but rather as
countable-dimensional vector spaces, dense in the algebras constructed
by Caranti \emph{et al}.

If $\LL'$ was obtained from $\LL$ through inflation, then $\LL$ may be
recovered from $\LL'$ through \emph{deflation}: choose $s'\in\LL'$ of
degree $1$ that does not commute with $\LL'_2$, and set
$(\LL')^\downarrow=\Bbbk(s')^p\oplus\bigoplus_{n\ge1}(\LL')_{pn}$. Then
$\LL\cong(\LL')^\downarrow$.

\subsection{Fabrykowski-Gupta Lie algebras}
Again inspired by the self-similarity structure of the
Fabrykowski-Gupta group~\cite{fabrykowski-g:growth1}, we consider
$X=\Fp[x]/(x^p)$ and a Lie algebra $\LL$ generated by $a,t$
with self-similarity structure
\[\psi:\begin{cases}
  \LL\to\LL\wr\Der X\\
  a\mapsto\partial_x\\
  t\mapsto x^{p-1}\otimes(a+t).
\end{cases}\] We seek a grading for $\LL$ that makes the generators
homogeneous. Again $X$ is $\Z$-graded, with $\deg(x)=-1$ so
$\deg(a)=1$, and $\deg(t)=\deg(a)=1$, while $\lambda\deg(t)=p-1+\deg(t)$ so
$\lambda=p$. In other words, the Lie algebra $\LL$ is no more than $\Z$-graded.
Using our matrix notation:
\[a\mapsto\begin{pmatrix}
  0 & 1 & \cdots & 0\\
  \vdots & \ddots & \ddots & \vdots\\
  \vdots && \ddots & 1\\
  0 &\cdots &\cdots& 0\end{pmatrix},\quad
t\mapsto\begin{pmatrix}
  0 & 0 & \cdots & 0\\
  0 & \ddots & \ddots & \vdots\\
  \vdots & \ddots & \ddots & 0\\
  -a-t & \cdots & 0 & 0\end{pmatrix}.\]

\begin{thm}
  The Lie algebra $\LL$ is not nil.
\end{thm}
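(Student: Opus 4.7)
The plan is to exhibit a single non-$p$-nil element $y\in\LL$ by tracing its action on explicit tensors in $T(X)$. Since the natural action of $\LL$ on $T(X)$ is faithful and extends to $\U(\LL)$ in such a way that the restricted power $y^{[p]}$ is sent to the associative power $y^p$, it suffices to produce, for every $k\ge0$, a tensor $v_k$ with $y^{p^k}\cdot v_k\neq 0$.

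I would take $y=a+t$, so that $\psi(y)=\partial_x+x^{p-1}\otimes y$ is self-referential. Unfolding the definition of the action gives, for any $w\in T(X)$ and any $i$,
\[y\cdot(x^i\otimes w)=i\,x^{i-1}\otimes w+x^{i+p-1}\otimes y(w),\]
and the second summand vanishes whenever $i\ge1$ because $x^p=0$ in $X=\Fp[x]/(x^p)$. Hence $y$ acts as $\partial_x$ on the leading coordinate whenever that coordinate is not $1$, and as the descent $y\cdot(1\otimes w)=x^{p-1}\otimes y(w)$ otherwise. This is precisely where the failure of evanescence for $t$ enters: the missing raising term is exactly what makes the following recursion close, and it is also why Corollary~\ref{cor:nil} cannot be applied to $\LL$.

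From $y\cdot(1\otimes w)=x^{p-1}\otimes y(w)$ I iterate $y$ a further $p-1$ times, each step just $\partial_x$ on the leading coordinate, and obtain
\[y^p\cdot(1\otimes w)=(p-1)!\cdot 1\otimes y(w)=-1\otimes y(w).\]
A short induction on $m$ then yields $(y^p)^m\cdot(1\otimes w)=(-1)^m\otimes y^m(w)$ for all $m\ge1$. Specialising $m=p^k$ and $w=1^{\otimes k}\otimes x$ gives
\[y^{p^{k+1}}\cdot(1^{\otimes k+1}\otimes x)=(-1)^{p^k}\otimes y^{p^k}\cdot(1^{\otimes k}\otimes x),\]
and, starting from $y\cdot x=\partial_x(x)=1$, a final induction on $k$ produces $y^{p^k}\cdot(1^{\otimes k}\otimes x)=c_k\cdot 1^{\otimes k+1}$ with $c_k\in\Bbbk^\times$ for every $k\ge0$. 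Hence $y^{p^k}\neq0$ as an operator on $T(X)$ for all $k$, so $y^{[p^k]}\neq0$ in $\LL$, and $\LL$ is not nil.

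There is no real combinatorial obstacle here: once the self-referential identity $\psi(a+t)=\partial_x+x^{p-1}\otimes(a+t)$ is exploited, the calculation is essentially mechanical. The only conceptual point worth highlighting is the correct choice of element (neither $a$, $t$, nor any bracket $(\ad a)^j(t)$ with $j<p-1$ works cleanly: for $j=p-1$ one even gets a central element), together with the observation $x^{i+p-1}=0$ for $i\ge1$, which simultaneously explains why $a+t$ is not bounded-and-evanescent and why the iteration scheme above closes.
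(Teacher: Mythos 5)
Your proof is correct and takes essentially the same route as the paper: the paper also picks $y=a+t$, records the identity $\psi(y^p)=-1\otimes y$ (which is exactly your computation $y^p\cdot(1\otimes w)=-1\otimes y(w)$, unfolded onto explicit tensors), and concludes by the same descent that $y^{p^s}\neq0$ for all $s$. Your version merely makes explicit the faithfulness-of-the-action step that the paper leaves implicit.
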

\begin{proof}
  Consider the element $x=a+t$. A direct calculation gives
  $\psi(x^p)=-1\otimes x$. It follows that, if $x^{p^s}=0$ for some
  $s>0$, then $-1\otimes x^{p^{s-1}}=0$ so $x^{p^{s-1}}=0$. Since
  $x\neq0$, it is not nil.
\end{proof}

\subsection{Petrogradsky-Shestakov-Zelmanov algebras}\label{ss:pszla}
We consider $\Bbbk$ a field of characteristic $p$, and
$X=\Bbbk[x]/(x^p)$. We fix an integer $m\ge2$, and consider the Lie
algebra $\LL_{m,\Bbbk}$ with generators $d_1,\dots,d_{m-1},v$ and
self-similarity structure
\[\psi:\begin{cases}
  \LL_{m,\Bbbk}\to\LL_{m,\Bbbk}\wr\Der X\\
  d_1\mapsto\partial_x\\
  d_{n+1}\mapsto 1\otimes d_n\text{ for }n=1,\dots,m-2,\\
  v\mapsto 1\otimes d_{m-1}+x^{p-1}\otimes v.
\end{cases}\] In the special case $m=2,\Bbbk=\Ft$,
Petrogradsky actually considered in~\cite{petrogradsky:sila} the
subalgebra $\langle v,[d_1,v]\rangle$, of codimension $1$ in
$\LL_{2,\Ft}$. Shestakov and Zelmanov consider
in~\cite{shestakov-z:nillie} the case $m=2$; see
also~\cite{petrogradsky-s:sila}. We repeat for clarity that last
example (with $d=d_1$) using our matrix notation. For that purpose, we
take divided powers $\{1,x,x^2/2,\dots,x^{p-1}/(p-1)!\}$ as basis of
$X$. The endomorphisms $m_x$ and $m_{\partial_x}$ are
respectively
\[m_x=\begin{pmatrix}
  0 & 0 & \cdots & 0\\
  1 & \ddots & \ddots & \vdots\\
  \vdots & 2 & \ddots & 0\\
  0 &\cdots & p-1 & 0\end{pmatrix},\quad
 m_{\partial_x}=\begin{pmatrix}
  0 & 1 & \cdots & 0\\
  \vdots & \ddots & \ddots & \vdots\\
  \vdots && \ddots & 1\\
  0 &\cdots &\cdots& 0\end{pmatrix}.\]

It follows that the matrix decomposition of $d,v$ are
\[d\mapsto\begin{pmatrix}
  0 & 1 & \cdots & 0\\
  \vdots & \ddots & \ddots & \vdots\\
  \vdots && \ddots & 1\\
  0 &\cdots &\cdots& 0\end{pmatrix},\quad
v\mapsto\begin{pmatrix}
  d & 0 & \cdots & 0\\
  0 & \ddots & \ddots & \vdots\\
  \vdots & \ddots & \ddots & 0\\
  -v & \cdots & 0 & d\end{pmatrix}.\]

We seek a grading that makes the generators homogeneous. The ring $X$ is $\Z$-graded by setting $\deg(x)=-1$, so $T(X)$ is
$\Z[\lambda]$-graded. If that grading is to be compatible with the
self-similarity structure, however, we must impose
$\deg(d_n)=\lambda^{n-1}$ and
\[\deg(v)=\lambda\deg(d_{m-1})=-(p-1)+\lambda\deg(v)=\lambda^{m-1},\]
so $\lambda^m-\lambda^{m-1}+1-p=0$. We therefore grade $T(X)$ and
$\LL_{m,\Bbbk}$ by the abelian group
\[\Lambda=\Z[\lambda]/(\lambda^m-\lambda^{m-1}+1-p).\]

For simplicity, we state the following result only in the case $m=2$:
\begin{prop}
  The Lie algebra $\LL_{2,\Bbbk}$ is regularly branched on its ideal
  $\langle[v,[d,v]]\rangle$ of codimension $p+1$.
\end{prop}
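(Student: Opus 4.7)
The plan is to verify the three ingredients of ``regularly branched on $\KK := \langle w \rangle$'' where $w := [v, [d, v]]$: non-triviality of $\KK$ (immediate since $w$ acts non-trivially on $T(X)$), the branching relation $X \otimes \KK \le \psi(\KK)$, and $\dim \LL/\KK = p+1$.

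I first verify that $\LL$ is recurrent and hence transitive (Lemma~\ref{lem:transitive}): $(\varepsilon \otimes 1)\psi(v) = d$ is immediate, and iterating $\operatorname{ad}(\partial_x)$ on $\psi(v) = 1 \otimes d + x^{p-1} \otimes v$ gives $\psi(\operatorname{ad}(d)^{p-1}(v)) = (p-1)!\cdot 1 \otimes v = -1 \otimes v$ by Wilson's theorem. The key branching calculation, after computing $\psi(w) = -x^{p-2} \otimes [d,v]$ and $\psi(\operatorname{ad}(d)^{p-2}(v)) = (p-1)! \cdot x \otimes v = -x \otimes v$, is
\[
\psi\bigl([w,\,\operatorname{ad}(d)^{p-2}(v)]\bigr) = [-x^{p-2} \otimes [d,v],\; -x \otimes v] = x^{p-1} \otimes [[d,v],v] = -x^{p-1} \otimes w.
\]
Since $[w, \operatorname{ad}(d)^{p-2}(v)] \in \KK$ (as $w \in \KK$), this exhibits $x^{p-1} \otimes w \in \psi(\KK)$. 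Bracketing repeatedly with $\psi(d) = \partial_x$ then gives $x^i \otimes w \in \psi(\KK)$ for every $0 \le i \le p-1$, and the standard bootstrap propagates from $X \otimes \Bbbk w$ to the full $X \otimes \KK$, using recurrence and the fact that $\psi(\KK)$ is an ideal in $\psi(\LL)$.

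For the codimension I claim that $\{d,\,v,\,[d,v],\,[d,[d,v]],\,\ldots,\,\operatorname{ad}(d)^{p-1}(v)\}$ is a basis of $\LL/\KK$, yielding $p+1$ elements. Spanning rests on $\operatorname{ad}(d)^p(v) = 0$ in $\LL$ (from $d^p = 0$, since $\psi(d^p) = \partial_x^p = 0$ and $\LL$ is faithful, combined with the restricted identity $\operatorname{ad}(d^p) = \operatorname{ad}(d)^p$), and on the fact that every Lie word in $d, v$ with at least two occurrences of $v$ lies in $\KK$, proved by induction on total weight via the Jacobi identity starting from $w \in \KK$. Linear independence is immediate from the $\Lambda$-grading with $\Lambda = \Z[\lambda]/(\lambda^2-\lambda+1-p)$, since the proposed basis elements have pairwise distinct degrees $1, \lambda, 1+\lambda, \ldots, (p-1)+\lambda$ and $\KK$ is a homogeneous ideal.

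The main obstacle will be executing the bootstrap in the second paragraph: promoting $\{x^i \otimes w\} \subset \psi(\KK)$ to the full $X \otimes \KK \le \psi(\KK)$ requires an induction on the bracket-depth of $k \in \KK$ as a word in $w$ and generators of $\LL$, carefully tracking how each bracket of $\psi(a)$ with $x^i \otimes c$ produces the desired leading term modulo ``garbage'' that must itself be verified to lie in $\psi(\KK)$. The same pattern already appears in the Grigorchuk and Gupta-Sidki treatments earlier in the excerpt, so while the method is standard it requires care to execute uniformly in $p$.
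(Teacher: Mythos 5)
Your proposal is correct and follows essentially the same route as the paper: check recurrence (hence transitivity via Lemma~\ref{lem:transitive}), exhibit one element $c'\in\KK$ with $\psi(c')=\pm x^{p-1}\otimes[v,[d,v]]$, and identify the $(p+1)$-element basis $\{d,v,[d,v],\dots,\operatorname{ad}(d)^{p-1}(v)\}$ of the quotient. Your witness $c'=[w,\operatorname{ad}(d)^{p-2}(v)]$ differs from the paper's choice ($[[d,v],[[d^{p-3},c]]]$ for $p\ge3$, and $[v,c]$ for $p=2$) but plays the identical role; and the bootstrap you flag as the main obstacle is exactly what the paper leaves implicit behind ``it suffices to exhibit $c'$'' --- it needs no induction on bracket-depth, only recurrence (to pass from $x^{p-1}\otimes c$ to $x^{p-1}\otimes\KK$) and transitivity. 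One point does need repair: your displayed identity $\psi(\operatorname{ad}(d)^{p-2}(v))=-x\otimes v$ is false when $p=2$, since then $\operatorname{ad}(d)^{0}(v)=v$ and $\psi(v)=1\otimes d+x\otimes v$ carries the extra summand $1\otimes d$. The conclusion survives because the resulting cross-term $1\otimes[[d,v],d]=-1\otimes\operatorname{ad}(d)^2(v)$ vanishes (as $\psi(\operatorname{ad}(d)^2(v))=0$ and $\LL$ is faithful), but you should either verify this cancellation or treat $p=2$ separately as the paper does. A smaller remark: your spanning argument for the quotient tacitly needs $[\operatorname{ad}(d)^iv,\operatorname{ad}(d)^jv]=0$ for $i,j\ge1$ (immediate from faithfulness, since $\psi$ sends both factors into $X\otimes\Bbbk v$); the Jacobi induction alone only places certain \emph{sums} of such brackets in $\KK$.
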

\begin{proof}
  First, $\LL_{2,\Bbbk}$ is recurrent: indeed
  $(\varepsilon\otimes1)\psi(v)=d$ and
  $(\varepsilon\otimes1)\psi[[d^{p-1},v]]=v$. Then, by
  Lemma~\ref{lem:transitive}, $\LL_{2,\Bbbk}$ is transitive.

  Write $c=[v,[d,v]]$ and $\KK=\langle c\rangle$. To prove that $\LL_{2,\Bbbk}$
  is branched on $\KK$, it suffices to exhibit $c'\in\KK$ with
  $\psi(c')=x^{p-1}\otimes c$. If $p=2$, then $c'=[v,c]$ will do,
  while if $p\ge3$ then $c'=[[d,v],[[d^{p-3},c]]]$ will do.

  A direct computation shows that $\LL_{2,\Bbbk}/\KK$ is finite-dimensional,
  with basis $\{d,v,[d,v],\cdots,[[d^{p-1},v]]\}$.
\end{proof}

We are now ready to reprove the following main result from Shestakov
and Zelmanov:
\begin{thm}[\cite{shestakov-z:nillie}*{Example~1}]
  The algebra $\LL_{m,\Bbbk}$ is nil but not nilpotent.
\end{thm}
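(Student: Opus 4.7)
The plan is to apply Corollary~\ref{cor:nil} for nillity, and to combine a weak-branching argument with Proposition~\ref{prop:notPI} to rule out nilpotence. For the first step it suffices to verify that the generators $d_1,\dots,d_{m-1},v$ are all bounded and $\ell$-evanescent for a common value of $\ell$; I claim $\ell=m-1$ works, which reduces everything to an explicit coordinate computation in $\W(X)$.

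Writing elements of $\W(X)$ as sequences indexed by level, an easy induction on $n$ starting from $d_1=\partial_x$ and using $\psi(d_{n+1})=1\otimes d_n$ shows that $d_n$ has a unique nonzero coordinate at level $n-1$, namely $1^{\otimes(n-1)}\otimes\partial_x$. For $v$, iterating $\psi(v)=1\otimes d_{m-1}+x^{p-1}\otimes v$ gives, by induction on $k$,
\[
v_k \;=\; (x^{p-1})^{\otimes(k-m+1)}\otimes 1^{\otimes(m-1)}\otimes\partial_x \qquad\text{for } k\ge m-1,
\]
and $v_k=0$ for $k<m-1$. The persistent tail $1^{\otimes(m-1)}\otimes\partial_x$ is the crux: in each nonzero coordinate the derivation is $\partial_x$ and the final $m-1$ letters have total degree $0<(p-1)(m-1)$, so every generator is $(m-1)$-evanescent. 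For boundedness, the components $v_k$ with $k\ge m$ begin with $x^{p-1}$ and are annihilated by $\varpi$ since $xX\cdot x^{p-1}=0$; the remaining components of $v$ and of each $d_n$ have the form $1^{\otimes j}\otimes\partial_x$ and are annihilated by $\varpi^p$ because $\varpi^p=0$ in $X$. Lemmata~\ref{lem:bounded} and~\ref{lem:evanescent} then guarantee that the entire restricted subalgebra $\LL_{m,\Bbbk}$ generated by these elements consists of bounded, $(m-1)$-evanescent elements, and Corollary~\ref{cor:nil} closes the nillity argument.

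For non-nilpotence, the preceding proposition already establishes regular branching of $\LL_{2,\Bbbk}$, and the same method extends to arbitrary $m$: one exhibits $c'\in\langle[v,[d_1,v]]\rangle$ with $\psi(c')=x^{p-1}\otimes c$ for a suitably chosen generator $c$ of the branching ideal, absorbing $d_2,\dots,d_{m-1}$ into iterated brackets as needed. Weak branching then forbids any polynomial identity on $\LL_{m,\Bbbk}$ by Proposition~\ref{prop:notPI}, and since every nilpotent Lie algebra satisfies one, $\LL_{m,\Bbbk}$ cannot be nilpotent. The main obstacle is really only the induction pinning down the coordinates of $v$; once the tail pattern is fixed, both the nil and the non-nilpotent conclusions reduce to routine manipulations with the explicit formula above.
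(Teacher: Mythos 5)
Your overall strategy---verify the hypotheses of Corollary~\ref{cor:nil} coordinate by coordinate, then rule out nilpotence via Proposition~\ref{prop:notPI}---is workable, and your explicit formula $v_k=(x^{p-1})^{\otimes(k-m+1)}\otimes 1^{\otimes(m-1)}\otimes\partial_x$ is correct. But two of your verifications are wrong as stated. First, $\varpi$ does \emph{not} annihilate $v_k$ for $k\ge m$: the augmentation ideal $\varpi_k$ equals $\sum X^{\otimes j-1}\otimes\varpi\otimes X^{\otimes k-j}$, so it contains $1^{\otimes(k-1)}\otimes x$, which multiplies $v_k$ to the nonzero element $(x^{p-1})^{\otimes(k-m+1)}\otimes1^{\otimes(m-2)}\otimes x\otimes\partial_x$. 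Likewise $\varpi^p$ does not kill $1^{\otimes j}\otimes\partial_x$ once $j\ge2$ (consider $x^{p-1}\otimes x^{p-1}\otimes\cdots$). The correct observation is that only the last $m-1$ tensor slots of $v_k$ can absorb augmentation factors, giving the uniform bound $\|v\|\le(m-1)(p-1)+1$, while $\|d_n\|=(n-1)(p-1)+1$; boundedness survives, but not for the reasons you give.

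Second, and more importantly, you feed $d_1=\partial_x$ into Corollary~\ref{cor:nil}. The definition of $\ell$-evanescence is formally satisfied by $\partial_x$ (its only coordinate sits at level $0$), but the proof of Lemma~\ref{lem:evanescent2} rests on every homogeneous component having degree strictly greater than $1$ in the auxiliary $\lambda$-grading, and the level-$0$ component $\partial_x$ has degree exactly $1$; the estimate $\deg(h)\ge(p-1)/(\lambda-1)$ is only derived for coordinates where the evanescence constraint actually bites. The paper sidesteps this in every example by applying Corollary~\ref{cor:nil} only to a finite-codimension ideal avoiding $\partial_x$ --- here $\langle v\rangle$, whose generators $v$ and $[[d_{i_1}\cdots d_{i_k},v]]$ are bounded and $m$-evanescent by Lemmata~\ref{lem:bounded} and~\ref{lem:evanescent} --- and then noting that the quotient $\LL_{m,\Bbbk}/\langle v\rangle$ is abelian with trivial $p$-mapping (the $d_i$ commute and satisfy $d_i^p=0$), so nillity follows because it is closed under extensions. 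You should restructure your argument the same way. As for non-nilpotence, the paper's proof is silent; your route through weak branching is fine for $m=2$ but only sketched for general $m$, and the cheapest argument is that $\LL_{m,\Bbbk}$ is finitely generated and infinite dimensional (being recurrent and transitive), whereas a finitely generated nilpotent Lie algebra is finite dimensional.
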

\begin{proof}
  The ideal $\langle v\rangle$ has finite codimension and is generated
  by $m$-evanescent elements, so Corollary~\ref{cor:nil} applies to
  it. We conclude by noting that $\LL_{m,\Bbbk}/\langle v\rangle$ is abelian and
  hence nil.
\end{proof}

\noindent We concentrate again on $m=2$ in the following results:
\begin{prop}
  The relative Hausdorff dimension of $\LL_{2,\Bbbk}$, with $\mathscr
  P=\Bbbk\partial_x$, is
  \[\Hdim_{\mathscr P}(\LL_{2,\Bbbk})=\frac{(p-1)^2}{p^3}.\]
\end{prop}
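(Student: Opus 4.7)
The plan is to apply Proposition~\ref{prop:hdimrat}, which expresses $\Hdim_{\mathscr P}(\LL)$ in terms of $\dim(\LL/\KK)$, $\dim(\psi\KK/(X\otimes\KK))$, and one reference value $\dim\LL_M$. By the preceding proposition, $\LL := \LL_{2,\Bbbk}$ is regularly branched on $\KK = \langle c\rangle$ with $c = [v,[d,v]]$ and $\dim(\LL/\KK)=p+1$; setting $v_i := \operatorname{ad}(d)^{i-1}(v)$, the basis of $\LL/\KK$ is $\{d,v_1,\dots,v_p\}$, and a direct induction yields $\psi(v_i) = \lambda_i\, x^{p-i}\otimes v$ for $2\le i\le p$ with nonzero scalars $\lambda_i\in\Bbbk$, while $v_{p+1}=0$ in $\LL$ (since $\operatorname{ad}(\partial_x)^p\psi(v)=\partial_x^p(x^{p-1})\otimes v=0$ and $\LL$ is faithful).

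I first compute $\dim(\psi\KK/(X\otimes\KK))$. Starting from $\psi(c)=-x^{p-2}\otimes v_2$, applying $\operatorname{ad}(d)^j$ with $0\le j\le p-2$ produces scalar multiples of $x^{p-2-j}\otimes v_2$. Applying $\operatorname{ad}(v)$ is governed in $\W(X)$ by
\[[\psi(v),x^j\otimes v_i] \;=\; x^j\otimes v_{i+1} \;+\; x^{p-1+j}\otimes[v,v_i];\]
for $j\ge 1$ the second summand dies because $x^{p-1+j}=0$, and I claim that for $j=0$ it lies in $X\otimes\KK$, i.e.\ $[v,v_i]\in\KK$ for every $2\le i\le p$. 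I prove this by induction on $i$: the base case is $[v,v_2]=c\in\KK$; for the step, Jacobi gives $[v,v_{i+1}]=-[v_2,v_i]+[d,[v,v_i]]$, in which $[d,[v,v_i]]\in\KK$ by the inductive hypothesis, and $[v_2,v_i]=0$ in $\LL$ because $\psi([v_2,v_i])$ is a scalar multiple of $x^{2p-2-i}\otimes[v,v]=0$, hence vanishes by faithfulness. The $v$-shift stops at $v_{p+1}=0$. A parallel induction on commutator depth shows these bracketing moves exhaust $\psi\KK$ modulo $X\otimes\KK$, producing exactly the $(p-1)^2$ classes $x^j\otimes v_i$ with $0\le j\le p-2$ and $2\le i\le p$.

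Next I take $M=3$. Since $\pi(v)=0$, $v$ acts trivially on $X$; then $\psi(v_2)=-x^{p-2}\otimes v$ shows $v_2$ acts trivially on $X^{\otimes 2}$, and $\psi(c)=-x^{p-2}\otimes v_2$ shows $c$ acts trivially on $X^{\otimes 3}$. Since the action-kernel on $X^{\otimes 3}$ is an ideal and $\KK=\langle c\rangle$, we get $\pi_3(\KK)=0$, so $\dim\LL_3=\dim(\LL/\KK)=p+1$ once one verifies that $d,v_1,\dots,v_p$ project to $p+1$ independent operators on $X^{\otimes 3}$; they are distinguished by which tensor factor they differentiate and, among $v_2,\dots,v_p$, by their $x$-weight on the first factor.

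Plugging into Proposition~\ref{prop:hdimrat}: $\beta = (p+1) - (p-1)^2/(p-1) = 2$ and $\alpha = (\dim\LL_3-\beta)/p^3 = (p-1)/p^3$, so $\Hdim_{\mathscr P}(\LL_{2,\Bbbk}) = \alpha(p-1)/\dim\mathscr P = (p-1)^2/p^3$. The main technical obstacle is the second paragraph: confirming that $\psi\KK$ modulo $X\otimes\KK$ has the claimed dimension, which rests on the auxiliary identities $[v,v_i]\in\KK$ for all $2\le i\le p$ and $[v_i,v_j]=0$ in $\LL$ for $i,j\ge 2$.
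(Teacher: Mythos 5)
Your proof is correct and takes essentially the same route as the paper: both invoke Proposition~\ref{prop:hdimrat} with $M=3$ and the same three counts $\dim(\LL/\KK)=p+1$, $\dim(\psi\KK/(X\otimes\KK))=(p-1)^2$ and $\dim\LL_3=p+1$, then solve the recursion. The only difference is that the paper asserts these dimensions with ``readily compute'' (its basis $\{[[d^iv^j,c]]\}$ of $\KK/(X\otimes\KK)$ corresponds exactly to your classes $x^{p-2-i}\otimes v_{2+j}$), whereas you carry out the verifications, including the auxiliary facts $[v,v_i]\in\KK$ and $[v_i,v_j]=0$ for $i,j\ge2$.
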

\begin{proof}
  We follow Proposition~\ref{prop:hdimrat}, using the notation
  $\LL=\LL_{2,\Bbbk}$. We may take $M=3$, and readily compute
  $\dim(\LL/\KK)=p+1$ with basis $\{d,[[d^i,v]]\}$ and
  $\dim(\KK/(X\otimes\KK))=(p-1)^2$ with basis $\{[[d^i,v^j,c]]\mid
  i,j\in\{0,\dots,p-2\}$. Letting $\LL_n$ denote the image of $\LL$ in
  $\Der(X^{\otimes n})$, we find $\dim(\LL_3)=p+1$. This gives
  \[\dim(\LL_n)=(p-1)\cdot p^{n-3}+2,\]
  and the claimed result.
\end{proof}

\begin{lem}
  The algebra $\LL_{m,\Bbbk}$ is contracting.
\end{lem}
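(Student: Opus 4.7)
The plan is to apply the preceding lemma characterizing nuclei of contracting self-similar algebras: it suffices to exhibit a finite-dimensional subspace $N\le\LL_{m,\Bbbk}$ such that for some $k\in\N$ both $\psi^k(S+N+[N,S])$ and $\psi^k((N+S)^p)$ lie in $X^{\otimes k}\otimes N\oplus\Der(X^{\otimes k})$, where $S=\Bbbk\{d_1,\dots,d_{m-1},v\}$ is the generating set.

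Guided by the Gupta-Sidki and Grigorchuk cases, where the nucleus is essentially the generating set, I would propose
\[N\;=\;\Bbbk\{d_1,\dots,d_{m-1},v,\,[d_1,v],[d_2,v],\dots,[d_{m-1},v]\}.\]
To check the bracket condition, first $\psi(d_1)=\partial_x\in\Der X$ and $\psi(d_i)=1\otimes d_{i-1}$ for $i\ge 2$, so iterating $\psi$ eventually pushes each $d_i$ into the pure-derivation component. A short induction using $\psi([d_1,d_j])=[\partial_x,1\otimes d_{j-1}]=0$ together with faithfulness shows that $[d_i,d_j]=0$ for all $i,j$. The key brackets then simplify to
\[\psi([d_i,v])=x^{p-1}\otimes[d_{i-1},v]\quad(i\ge 2),\qquad\psi([d_1,v])=-x^{p-2}\otimes v,\]
so $\psi([N,S])\subseteq X\otimes N$, and any iterated bracket of the generators reduces under $\psi^k$ to a tensor of powers of $x$ with elements of $\{d_1,\dots,d_{m-1},v\}$ in finitely many steps.

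The most delicate step is the restricted $p$-mapping condition on $(N+S)^p$. Here $\partial_x^p=0$ in $\Der X$ gives $d_1^p=0$, and inductively $\psi(d_{i+1}^p)=(1\otimes d_i)^p=1\otimes d_i^p$ yields $d_i^p=0$ for every $i$. For $v^p$, Jacobson's formula gives
\[\psi(v^p)=(1\otimes d_{m-1})^p+(x^{p-1}\otimes v)^p+\Lambda(1\otimes d_{m-1},x^{p-1}\otimes v),\]
where $\Lambda$ is a fixed universal sum of iterated commutators of weight $p$. The first summand is $1\otimes d_{m-1}^p=0$ by the above; the second is $x^{p(p-1)}\otimes v^p=0$ since $x^p=0$ in $X$; and each commutator in $\Lambda$ expands, via the bracket formulas, into an element of $X\otimes N$. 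The same analysis handles the $p$-th powers of the $[d_i,v]\in N$.

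The main obstacle is the combinatorial bookkeeping for $\Lambda$ and for iterated applications of $\psi$ to the commutators it produces: each must be tracked through enough iterations to land in $X^{\otimes k}\otimes N\oplus\Der(X^{\otimes k})$. Once this finite check is carried out, the preceding lemma applies and $\LL_{m,\Bbbk}$ is contracting, with nucleus contained in $N$.
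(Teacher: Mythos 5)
Your proposal is correct and takes essentially the same approach as the paper, whose entire proof is the one-line assertion that the nucleus is $\Bbbk\{d_1,\dots,d_{m-1},v\}$ (a smaller space than your $N$, which is also admissible since the test only requires $N$ to \emph{contain} the nucleus). One small imprecision: for $m\ge3$ the one-step containment $\psi([N,S])\subseteq X\otimes N$ fails (e.g.\ $\psi([[d_i,v],d_j])=x^{p-1}\otimes[[d_{i-1},v],d_{j-1}]$ produces double brackets outside $N$), but your subsequent iterated statement --- that $\psi^k$ pushes everything into $X^{\otimes k}\otimes N\oplus\Der(X^{\otimes k})$ for $k$ large, since each application either lowers an index $d_j\mapsto d_{j-1}$ or consumes a $\partial_x$, and $x^{2(p-1)}=0$ kills the remaining cross terms --- is exactly what the lemma requires, so the argument stands.
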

\begin{proof}
  Its nucleus is $\Bbbk\{d_1,\dots,d_{m-1},v\}$.
\end{proof}

\begin{cor}
  The Gelfand-Kirillov dimension of $\LL_{m,\Bbbk}$ is $\log p/\log\lambda$.
\end{cor}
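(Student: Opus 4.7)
The plan is to pinch $\GKdim(\LL_{m,\Bbbk})$ between the upper bound of Proposition~\ref{prop:GKupper} and the lower bound of Proposition~\ref{prop:GKlower}. Since $\dim X=p$ and the dilation $\lambda$ is the positive real root of $f(\lambda)=\lambda^m-\lambda^{m-1}+1-p$ exceeding $1$, both bounds coincide at $\log p/\log\lambda$ and nothing is left to do beyond checking the hypotheses. The existence of such a root is routine: $f(1)=1-p<0$ while $f(+\infty)=+\infty$, so $f$ has a real root in $(1,\infty)$, which I take as $\lambda$.

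For the upper bound via Proposition~\ref{prop:GKupper}, I need to verify that $\LL_{m,\Bbbk}$ is contracting, $\R_+$-graded with $\lambda>1$, and generated by finitely many positive-degree elements. Contraction is exactly the content of the lemma immediately preceding, whose nucleus is $\Bbbk\{d_1,\dots,d_{m-1},v\}$. The grading on $\LL_{m,\Bbbk}$ established in~\S\ref{ss:pszla} puts $d_i$ in degree $\lambda^{i-1}$ and $v$ in degree $\lambda^{m-1}$; all these are strictly positive real numbers once $\lambda>1$, so the generating set $\{d_1,\dots,d_{m-1},v\}$ meets the positive-degree requirement.

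For the matching lower bound via Proposition~\ref{prop:GKlower}, I need $\LL_{m,\Bbbk}$ to be regularly weakly branched. The preceding proposition already supplies this for $m=2$ with branching ideal $\KK=\langle[v,[d,v]]\rangle$ and the key witness $\psi(c')=x^{p-1}\otimes c$. For general $m$ I would imitate that argument, producing an analogous element $c$ (built from $v$ and iterated brackets with the $d_i$) together with some $c'\in\langle c\rangle$ whose image under $\psi$ equals $x^{p-1}\otimes c$; this gives $X\otimes\KK\subseteq\psi(\KK)$, which is the defining condition of regular weak branching.

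The main obstacle is thus purely the extension of the branching verification to arbitrary $m$; the upper-bound half follows verbatim from the contraction lemma, and once both halves are in hand the equality $\GKdim(\LL_{m,\Bbbk})=\log p/\log\lambda$ is immediate.
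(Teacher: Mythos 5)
Your proposal matches the paper's own proof, which consists of the single sentence that the result ``follows immediately from Propositions~\ref{prop:GKupper} and~\ref{prop:GKlower}''; you simply spell out the hypothesis-checking that the paper leaves implicit. Your observation that the regular weak branching needed for the lower bound is only verified in the paper for $m=2$ (so that the general-$m$ case requires constructing the analogous branching element) is apt and, if anything, makes your account more honest than the original.
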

\begin{proof}
  This follows immediately from Propositions~\ref{prop:GKupper} and~\ref{prop:GKlower}.
\end{proof}

Note, however, that careful combinatorics give a much sharper
result. For example, again for $m=p=2$, the dimension of the span of
commutators of length $\le n$ in $\LL_{2,\Ft}$ is, for
$F_{k-1}<n\le F_k$ and $(F_k)$ the Fibonacci numbers, the number of
manners of writing $F_k-n$ as sum of distinct Fibonacci numbers among
$F_1,\dots,F_{k-4}$. In particular, for $n=F_k$ at least $5$, there is
precisely one commutator of length $n$, namely $1^{\otimes k-2}\otimes
v$.

\section{Self-similar associative algebras}\label{ss:assoc}
At least three associative algebras may be associated with a
self-similar Lie algebra $\LL$. The first is the universal enveloping
algebra $\U(\LL)$, which maps onto the other two. The second is the
adjoint algebra of $\LL$, namely the associative subalgebra
$\mathfrak{Adj}(\LL)$ of $\End(\LL)$ generated by the derivations
$[a,-]:\LL\to\LL$ for all $a\in\LL$. The third one is the
\emph{thinned algebra} $\AA(\LL)$, defined as follows.

Let us write $d=\dim X$. Recall from~\eqref{eq:matext} that the
self-similarity structure $\psi:\LL\to\LL\wr\Der X$ gave
rise to a linear map $\psi':\LL\to\Mat_d(\LL\oplus\Bbbk)$. We extend
this map multiplicatively to an algebra homomorphism
$\psi':T(\LL)\to\Mat_d(T(\LL))$. Now, for $a=\sum x_i\otimes a_i+\delta$
and $b=\sum y_j\otimes b_j+\epsilon$ in $\LL$, we have $[a,b]=\sum\sum
x_iy_j\otimes[a_i,b_j]+\sum\delta y_j\otimes b_j-\sum\epsilon
x_i\otimes a_i+[\delta,\epsilon]$, and therefore
\begin{align*}
  \psi'(ab - ba - [a,b])&=\sum\sum
  m_{x_iy_j}(a_ib_j-b_ja_i-[a_i,b_j])\\
  &+\sum(m_{x_i}m_\epsilon-m_\epsilon m_{x_i}-m_{\epsilon x_i})a_i\\
  &-\sum(m_{y_j}m_\delta-m_\delta m_{y_j}-m_{\delta y_j})b_j\\
  &+m_\delta m_\epsilon-m_\epsilon m_\delta-m_{[\delta,\epsilon]};
\end{align*}
the last three summands are zero, so all entries of
$\psi'(ab-ba-[a,b])$ lie in the ideal generated by the
$a'b'-b'a'-[a',b']$. We deduce:
\begin{prop}
  The map $\psi'$ induces an algebra homomorphism $\psi':\U(\LL)\to
  \Mat_d(\U(\LL))$.
\end{prop}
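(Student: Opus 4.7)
The plan is to post-compose $\psi':T(\LL)\to\Mat_d(T(\LL))$ with the matrix algebra of the quotient map $T(\LL)\twoheadrightarrow\U(\LL)$, obtaining an associative algebra homomorphism $T(\LL)\to\Mat_d(\U(\LL))$, and then to verify that this composite vanishes on the defining ideal of $\U(\LL)$, so that it factors through $\U(\LL)$.

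First I would treat the Lie relations $r_{a,b}:=ab-ba-[a,b]$. The computation carried out just before the statement shows that $\psi'(r_{a,b})$, viewed as a matrix over $T(\LL)$, has every entry in the two-sided ideal generated by the relations $\{r_{a',b'}:a',b'\in\LL\}$. Consequently all entries of $\psi'(r_{a,b})$ die in $\U(\LL)$, so the composite kills $r_{a,b}$. Since $\psi':T(\LL)\to\Mat_d(T(\LL))$ is a homomorphism of associative algebras and the Lie relations generate an ideal in $T(\LL)$, this already handles the characteristic-$0$ (equivalently, unrestricted) case.

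In the restricted case ($\Bbbk$ of characteristic $p$), I would additionally need to show that the composite $T(\LL)\to\Mat_d(\U(\LL))$ annihilates $a^{\otimes p}-a^p$ for every $a\in\LL$, i.e.\ that $\psi'(a)^p=\psi'(a^p)$ holds in $\Mat_d(\U(\LL))$, where the left side is the associative $p$th power of the matrix. By hypothesis, $\psi$ preserves the $p$-mapping, so $\psi(a^p)=\psi(a)^{[p]}$ in $\LL\wr\Der X$, with the $p$-structure given by $(x\otimes b)^p=x^p\otimes b^p$ on $X\otimes\LL$, by the usual $p$-power on $\Der X$, and by Jacobson's formulas for the cross terms coming from the semidirect product. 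Under the matrix translation $\psi'$, the $p$-power on $X\otimes\LL\rtimes\Der X$ corresponds precisely to the $p$-power of $\sum m_{x_i}a_i+m_\delta$ in $\Mat_d(\U(\LL))$, because Jacobson's identities are universal associative identities and because $b^p$ is identified with $b^{\otimes p}$ in $\U(\LL)$ by the defining relation of the restricted enveloping algebra.

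The main obstacle is this last matching. The bookkeeping, even though it is essentially formal, must reconcile three different $p$-operations: the restricted $p$-mapping on $\W(X)$ described explicitly in~\S\ref{ss:full}, the induced $p$-operation on the wreath product, and the associative $p$-power in $\Mat_d(\U(\LL))$. Once one observes that each of the three is governed by the same Jacobson formulas applied componentwise to $\sum m_{x_i}a_i+m_\delta$, the identity $\psi'(a)^p\equiv\psi'(a^p)\pmod{\text{Lie relations}}$ follows and the composite descends to $\Mat_d(\U(\LL))$, yielding the desired homomorphism $\psi:\U(\LL)\to\Mat_d(\U(\LL))$.
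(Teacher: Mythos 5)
Your treatment of the Lie relations is exactly the paper's: the displayed computation immediately preceding the statement shows that every entry of $\psi'(ab-ba-[a,b])$ lies in the ideal generated by the elements $a'b'-b'a'-[a',b']$, and the paper deduces the proposition from precisely that observation, with no further argument. Where you go beyond the paper is the restricted case. Since $\U(\LL)$ is, by the paper's standing convention, the \emph{restricted} enveloping algebra whenever $\Bbbk$ has positive characteristic, one must also check that $\psi'(a)^p=\psi'(a^p)$ holds in $\Mat_d(\U(\LL))$, and the paper's derivation is silent on this point. Your sketch of that verification is sound and uses the right ingredients: $m_x^p=m_{x^p}$ because $X$ is commutative; $m_\delta^p=m_{\delta^p}$ because the $p$-mapping on $\Der X$ is the associative $p$th power; $(m_xa)^p=m_{x^p}a^p$ because the scalar matrices over $\Bbbk$ are central; and Jacobson's formula for the $p$th power of a sum, whose cross terms are Lie polynomials and are therefore already matched by the compatibility with brackets established in the first step. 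This reduces everything to the hypothesis that $\psi$ preserves the $p$-mapping, which is part of the definition of a self-similar restricted Lie algebra. So the proposal is correct, follows the paper's route for the core computation, and is in fact more complete than the source on the restricted relations; it would be worth writing out the Jacobson bookkeeping in full if this step were ever contested, but the outline as given closes the gap.
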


Now we note that, even though $\psi:\LL\to\LL\wr\Der X$ may be
injective, this does not imply that $\psi'$ is injective. As a simple
example, consider the Grigorchuk Lie algebra from~\S\ref{ss:grla}. We
have $bc\neq0$ in $\U(\LL)$, but $\psi'(bc)=0$.

Since $\LL$ acts on $T(X)$ by $\Bbbk$-linear maps, the universal
enveloping algebra $\U(\LL)$ also acts on $T(X)$. Quite clearly, the
kernel of the action of $\U(\LL)$ on $T(X)$ contains the kernel of
$\psi'$.

\begin{defn}\label{defn:thinned}
  Let $\LL$ be a self-similar Lie algebra. The \emph{thinned
    enveloping algebra} of $\LL$ is the quotient $\AA(\LL)$ of
  $\U(\LL)$ by the kernel of its natural action on $T(X)$.
\end{defn}

Rephrasing the results from~\S\ref{ss:examples}, we deduce that, for
$\LL=$ the Gupta-Sidki Lie algebra, the Grigorchuk Lie algebra, and
the Petrogradsky-Shestakov-Zelmanov algebras, the image of $\LL$ in
$\U(\LL)$ is nil; the same property then holds for the adjoint algebra
and the thinned algebra of $\LL$. Apart from this information, the
structure of $\U(\LL)$ or $\mathfrak{Adj}(\LL)$ seems mysterious.

In case $\LL$ admits two gradings with different dilations, it is
possible to deduce that $\U(\LL)$ is a sum of locally nilpotent
subalgebras. Assume therefore that $\LL$ admits two degree functions,
written $\deg_\lambda$ and $\deg_\mu$. For simplicity, assume also
$\lambda>|\mu|>1$, although the case $|\mu|\le1$ can also be handled
as a limit, or by a small change in the argument; indeed increasing $|\mu|$ only
makes inequalities tighter in what follows.  The following is drawn
from~\cite{petrogradsky-s:sila}*{Theorem~2.1}:
\begin{prop}\label{prop:sum}
  Let $\LL$ be a graded self-similar Lie algebra, in characteristic
  $p>0$, with gradings $\deg_\lambda,\deg_\mu$, such that $\LL$ is
  generated by finitely many positive-degree elements with respect to
  $\deg_\lambda$. Then there is a decomposition as a sum of
  subalgebras
  \[\U(\LL)=\U_+\oplus\U_0\oplus\U_-,\]
  in which $\U_+,\U_0,\U_-$ are respectively the spans of homogeneous
  elements $a$ with $\deg_\mu(a)>0$, respectively $=0$, respectively
  $<0$; and $\U_+$ and $\U_-$ are locally nilpotent.

  In particular, homogeneous elements with $\deg_\mu\neq0$ are nil.
\end{prop}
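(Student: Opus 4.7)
The plan is to exploit the bigrading on $\U(\LL)$ coming from $\deg_\lambda$ and $\deg_\mu$, together with the self-similarity $\psi\colon\U(\LL)\to\Mat_d(\U(\LL))$ constructed in~\S\ref{ss:assoc}. Since $\LL$ is $\deg_\mu$-graded, the defining ideal of $\U(\LL)$ inside $T(\LL)$ is $\deg_\mu$-homogeneous, so $\U(\LL)$ inherits the grading; splitting by the sign of $\deg_\mu$ yields the three pieces, and additivity of $\deg_\mu$ on products makes $\U_+$ and $\U_-$ subalgebras (without unit) while $\U_0$ is a subalgebra containing~$\Bbbk$.

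The core quantitative input is a linear bound linking the two degrees in $\U(\LL)$: there exists a constant $D$ such that every nonzero homogeneous $u\in\U(\LL)$ with $\deg_\lambda(u)>0$ satisfies $|\deg_\mu(u)|\le D\deg_\lambda(u)$. This holds because $\U(\LL)$ is generated, as an associative algebra, by a finite set of homogeneous elements of $\LL$, each with $\deg_\lambda>0$ and a fixed ratio $|\deg_\mu|/\deg_\lambda$; both degrees are additive on products, so the maximum of these ratios serves as~$D$.

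For local nilpotency of $\U_+$, it suffices, by passing to a homogeneous basis, to take finitely many homogeneous $a_1,\dots,a_r\in\U_+$ and show that their products of sufficient length vanish. Set $c=\min_i\deg_\mu(a_i)>0$ and $d=\max_i\deg_\lambda(a_i)$; a product $p=a_{i_1}\cdots a_{i_n}$ satisfies $\deg_\lambda(p)\le nd$ and $\deg_\mu(p)\ge nc$. A direct inspection of the matrix recursion~\eqref{eq:matext} shows that every entry of $\psi^N(u)$ is homogeneous of bidegree $(\alpha',\beta')$ with $\alpha'\le\deg_\lambda(u)/\lambda^N+K_\lambda$ and $|\beta'|\ge|\deg_\mu(u)|/|\mu|^N-K_\mu$, for constants $K_\lambda,K_\mu$ depending only on the grading of~$X$. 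Fix $N$ large enough that $(\lambda/|\mu|)^N>2Dd/c$; then for all $n$ sufficiently large, every entry of $\psi^N(p)$ has $|\beta'|>D\alpha'$, contradicting the bound of the previous paragraph, so all entries vanish and $\psi^N(p)=0$. The ``in particular'' clause specializes this to $p=a^n$ for a single homogeneous $a$.

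The main obstacle is that $\psi$, although faithful on $\LL$ by hypothesis, is \emph{not} generally faithful on $\U(\LL)$: as the excerpt itself notes, $\psi'(bc)=0$ while $bc\neq 0$ in the enveloping algebra of~$\LLg$. So $\psi^N(p)=0$ only forces $p=0$ after passing to the thinned algebra $\AA(\LL)$, i.e.\ modulo the kernel of the action on $T(X)$, which is the setting in which the local nilpotency must be read. For the nillity of a homogeneous $a\in\LL$ with $\deg_\mu(a)\neq 0$ as an element of the restricted Lie algebra $\LL$ itself, one runs the same argument with the iterated $p$-power $a^{p^s}$ in place of $a^n$; since $\psi$ \emph{is} faithful on $\LL$, vanishing of $\psi^N(a^{p^s})$ yields $a^{p^s}=0$ in $\LL$ outright.
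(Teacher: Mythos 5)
Your splitting of $\U(\LL)$ into $\U_+\oplus\U_0\oplus\U_-$ and the reduction to finitely many homogeneous elements are fine, but the core of your argument does not prove the statement as written. What you actually show is that, for a sufficiently long product $p$ of homogeneous elements of $\U_+$, every entry of $\psi^N(p)$ would have to violate the linear bound $|\deg_\mu|\le D\deg_\lambda$, hence $\psi^N(p)=0$ in $\Mat_{d^N}(\U(\LL))$. As you yourself note, $\psi'$ is not injective on $\U(\LL)$ (e.g.\ $\psi'(bc)=0$ while $bc\neq0$ in the enveloping algebra of the Grigorchuk example), so this only places $p$ in the kernel of $\psi'^N$; it proves local nilpotency of the \emph{image} of $\U_+$ in the thinned algebra $\AA(\LL)$, not of $\U_+$ inside $\U(\LL)$. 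Saying that the statement ``must be read'' in $\AA(\LL)$ is a change of statement, not a proof. The same defect propagates to the ``in particular'' clause: in Proposition~\ref{prop:sum} it asserts that homogeneous elements of $\U(\LL)$ with $\deg_\mu\neq0$ are nil \emph{in $\U(\LL)$}, which follows at once from local nilpotency of $\U_\pm$; your version only delivers nillity in $\AA(\LL)$ together with nillity of the $p$-mapping on $\LL$.

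The paper's proof never applies $\psi$ to elements of $\U(\LL)$, precisely to avoid this loss. Its key input is a \emph{sublinear} inequality $|\deg_\mu(u)|\le C\deg_\lambda(u)^{\theta'}$ with $\theta'=\log(|\mu|\dim X)/\log(\lambda\dim X)\in(0,1)$, valid for every nonzero homogeneous $u\in\U(\LL)$. This is where characteristic $p$ and contraction enter: by Poincar\'e--Birkhoff--Witt the monomials $\prod a_i^{n_i}$ with $0\le n_i\le p-1$ form a basis, so to accumulate a large $\deg_\lambda$ one is forced to use basis elements $a_i$ lying in $X^{\otimes m_i}\otimes N$ with $m_i$ large, and for those \eqref{eq:contrub} and \eqref{eq:contrlb} make the ratio $|\deg_\mu(a_i)|/\deg_\lambda(a_i)$ decay like $(|\mu|/\lambda)^{m_i}$. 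Combining this sublinear upper bound with the linear lower bound $\deg_\lambda(u)\le D'\deg_\mu(u)$, valid on products drawn from a fixed finite subset of $\U_+$, gives $\deg_\mu(u)\le C(D'\deg_\mu(u))^{\theta'}$, which bounds $\deg_\mu(u)$ and hence the length of a nonzero product --- all inside $\U(\LL)$, with no kernel to contend with. Your purely linear bound $|\deg_\mu(u)|\le D\deg_\lambda(u)$ cannot play this role: it is compatible with nonvanishing products of arbitrary length, which is exactly what drove you to apply $\psi^N$ and lose injectivity. To repair the proof you need a degree inequality that already holds in $\U(\LL)$ and becomes inconsistent as the product length grows; that is the paper's $\theta'<1$ estimate.
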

\begin{proof}
  We first deduce from~\eqref{eq:contrub} and~\eqref{eq:contrlb} that,
  for appropriate constants $C,D$, we have for all $a\in X^{\otimes
    n}\otimes N$
  \[\log_{|\mu|}(\deg_\mu(a)/C)\le n\le\log_\lambda(\deg_\lambda(a)/D).\]
  Setting $\theta=\log|\mu|/\log\lambda\in(0,1)$, we get, for a fresh constant $C$,
  \[\deg_\mu(a)\le C\deg_\lambda(a)^\theta.\]
  We seek a similar inequality for $\U(\LL)$. For that purpose, choose
  a homogeneous basis $(a_1,a_2,\dots)$ of $\LL$, and recall that
  $\U(\LL)$ has basis
  \[\big(\prod_{i\ge1}a_i^{n_i}\mid n_i\in\Fp,\text{ almost all }0\big).\]
  Each $a_i$ belongs to $X^{\otimes m_i}\otimes N$ for some minimal
  $m_i\in\N$. For $j\in\N$, let $\ell_j$ denote the number of $m_i$
  with $m_i=j$. Then $\ell_j\le(\dim X)^j\dim N$.

  Consider now $u=\prod_{i\ge1}a_i^{n_i}\in\U(\LL)$. We have
  \[\deg_\lambda(u)=\sum_{i\ge1}n_i\lambda^{m_i},\quad|\deg_\mu(u)|\le\sum_{i\ge1}n_i|\mu|^{m_i},\]
  and we wish to study cases in which $|\deg_\mu(u)|$ is as large as
  possible for given $\deg_\lambda(u)$. Because $|\mu|<\lambda$ and by
  convexity of the functions $\lambda^x,|\mu|^x$, this will occur when
  $n_i=p-1$ whenever $m_i$ is small, say $m_i<K$, and $n_i=0$ whenever
  $m_i>K$; with intermediate values whenever $m_i=K$. Letting
  $A\propto B$ mean that the ration $A/B$ is universally bounded away
  from $0$ and $\infty$, we have
  \begin{align}
    \deg_\lambda(u)&\propto(p-1)\sum_{j=0}^K\ell_j\lambda^j\propto(\lambda\dim X)^K,\notag\\
    \deg_\mu(u)&\overset<\propto(p-1)\sum_{j=0}^K\ell_j|\mu|^j\propto(|\mu|\dim X)^K,\notag\\
    \intertext{so}
    \deg_\mu(u)&<C\deg_\lambda(u)^{\theta'},\label{eq:theta'}
  \end{align}
  for a constant $C$ and $\theta'=\log(|\mu|\dim X)/\log(\lambda\dim X)\in(0,1)$.

  It is clear that we have a decomposition
  $\U(\LL)=\U_+\oplus\U_0\oplus\U_-$. We now show that $\U_+$ is
  locally nilpotent, the same argument applying to $\U_-$.  For that
  purpose, consider $u_1,\dots,u_k\in\U_+$, spanning a subspace $V$,
  and set
  \[D=\max_{i\in\{1,\dots,k\}}\frac{\deg_\lambda(u_i)}{\deg_\mu(u_i)}.\]
  For $s\in\N$, consider a non-zero homogeneous element $u$ in the
  $s$-fold product $V^s$. Then $\deg_\lambda(u)\le D\deg_\mu(u)$;
  combining this with~\eqref{eq:theta'} we get
  \[\deg_\mu(u)\le C(D\deg_\mu(u))^{\theta'},\]
  so $\deg_\mu(u)$ is bounded and therefore $s$ is also bounded.
\end{proof}
Note that, if $\LL$ is generated by a set $S$ such that the
$\deg_\mu(s)$ with $s\in S$ are linearly independent in $\R$, then
$\U_0=\Bbbk$ and therefore homogeneous elements except scalars are nil
in $\U(\LL)$.

On the other hand, note that even the quotient algebra $\AA(\LL)$
tends to have transcendental elements. For example, $\AA(\LLtg)$
contains $a+b+ad$, which is transcendental
by~\cite{bartholdi:branchalgebras}*{Theorem~4.20} and
Theorem~\ref{thm:isothinnedg}. It seems that the element $a^2+t$ of
$\AA(\LLgs)$ is transcendental.
\begin{lem}
  If $p=\alpha^m-\alpha^{m-1}+1$ for some $\alpha\in\N$, then
  $\AA(\LL_{m,\Fp})$ is not nil.
\end{lem}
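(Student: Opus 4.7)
The approach is to exhibit an explicit non-nilpotent element of $\AA(\LL_{m,\Fp})$, in the spirit of the Fabrykowski--Gupta construction where $x=a+t$ satisfies $\psi(x^p)=-1\otimes x$.  Since $\LL_{m,\Fp}$ is itself a nil Lie algebra --- in particular $v^{p^k}=0$ in $\U(\LL)$ for some $k$ --- a non-nilpotent element of $\AA(\LL)$ must be a genuinely associative, non-Lie product of generators.  The hypothesis $p=\alpha^m-\alpha^{m-1}+1$ with $\alpha\in\N$ is essential because it makes $\lambda=\alpha$ a root of the defining polynomial $\lambda^m-\lambda^{m-1}+1-p$ of the grading ring $\Lambda$, so $\LL$ acquires an integer $\Z$-grading with $\deg(d_k)=\alpha^{k-1}$ and $\deg(v)=\alpha^{m-1}$.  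For $m=2$ the complementary root $\mu=1-\alpha$ is also an integer, and Proposition~\ref{prop:sum} then says homogeneous elements of $\U(\LL)$ of nonzero $\mu$-degree are nil; a non-nilpotent element must therefore be sought in the $\mu$-zero subalgebra $\U_0\subseteq\U(\LL)$, which is strictly larger than $\Bbbk$ precisely because $\alpha,\mu\in\Z$ makes the generators' $\mu$-degrees $\Q$-linearly dependent.

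The candidate I would try is the associative monomial
\[u=v\,d_1^{\alpha-1}\in\U_0\]
for $m=2$, with $\deg_\mu(u)=(1-\alpha)+(\alpha-1)=0$; for $m\ge 3$ one takes an analogous product $v\prod_k d_k^{c_k}$ whose exponents balance the $\mu$-degree, possible precisely because $\alpha\in\N$ makes the exponents integral.  Using $\psi'(d_1)=m_{\partial_x}$ and $\psi'(v)=d_{m-1}I-vE_{p-1,0}$ I would compute $\psi'(u)\in\Mat_p(\U(\LL))$ and show by induction on $N$ that $\psi'(u^N)$ has a nonzero entry in $\Mat_p(\AA(\LL))$; the key observation is that the entries of $\psi'(u^N)$ are \emph{alternating} words in the $d_k$'s and $v$, so the relation $v^{p^k}=0$ never forces collapse --- no substring $v^{p^k}$ ever appears.

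To confirm non-vanishing in $\AA(\LL)$ (as opposed to merely in $\U(\LL)$), I would trace the action of $u^N$ on a long tensor $z_N=e_{p-1}^{\otimes n_N}\in T(X)$, applying the recursion $\psi(v)=1\otimes d_{m-1}+x^{p-1}\otimes v$ step by step.  Each application of $u$ consumes only a bounded number of tensor coordinates, so taking $n_N$ sufficiently large (growing linearly with $N$) yields $u^N\cdot z_N\ne 0$, hence $u^N\ne 0$ in $\AA(\LL)$ for every $N$.

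The principal obstacle is the nillity of $\LL$ itself: every Lie element of $\U(\LL)$ is nilpotent, so Fabrykowski--Gupta-style candidates like $d_1+v$ fail --- their $p$-th-power matrices have diagonal entries such as $-v$ which are themselves nilpotent, and naive iteration collapses.  The specific form $u=vd_1^{\alpha-1}$ is engineered to produce alternating-letter matrix entries under iteration, avoiding this collapse; verifying the resulting alternating words' non-vanishing inductively via the self-similar action on $T(X)$ is the most technical step, and is the place where the integer nature of $\alpha$ enters in a crucial combinatorial way.
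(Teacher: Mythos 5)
Your overall strategy---exhibit an explicit monomial in the generators whose matrix recursion prevents its powers from vanishing---is the same as the paper's, and your choice of element is well motivated: the constraint $\deg_\mu(u)=0$ coming from Proposition~\ref{prop:sum} does single out (for $m=2$) a monomial of the shape $v\,d_1^{\alpha-1}$, and your observation that $\U_0\neq\Bbbk$ precisely because $\alpha\in\N$ makes the two degrees commensurable is exactly the right reason why the hypothesis $p=\alpha^m-\alpha^{m-1}+1$ enters. The paper's witness is likewise a monomial $d_1^{\,\alpha\beta^{m-2}}d_2^{\,\alpha\beta^{m-3}}\cdots d_{m-1}^{\,\alpha}v$ in the $d_k$'s and $v$.

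The genuine gap is in the step where you must show $u^N\neq0$ for all $N$. Your two arguments for this are heuristics, not proofs: (i) the assertion that the entries of $\psi'(u^N)$ are ``alternating words containing no substring $v^{p^k}$'' does not establish non-vanishing, because $\AA(\LL)$ is by definition the quotient acting faithfully on $T(X)$, so an element can die for reasons unrelated to the relation $v^{p^k}=0$; and (ii) the fact that each application of $u$ to $e_{p-1}^{\otimes n_N}$ ``consumes boundedly many tensor coordinates'' does not show the image is nonzero. What actually closes the argument---and what the paper's proof consists of---is a single triangularity computation: one checks that $\psi'(u^{k})$, for the correct exponent $k$, is a \emph{triangular} matrix over the algebra having $\pm u$ as its $(p,p)$ diagonal entry; then $\psi'(u^{k^{n+1}})=\psi'(u^{k})^{k^{n}}$ is again triangular with $(\pm u)^{k^{n}}$ at position $(p,p)$, and $u^{k^{n}}\neq0$ follows by induction from $u\neq0$ and the injectivity of $\psi'$ on $\AA(\LL)$. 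Note that the $\deg_\lambda$-grading already dictates the exponent at which self-reproduction can occur, since an entry of $\psi'(u^{k})$ proportional to $u$ forces $\deg_\lambda(u^{k})\equiv\lambda\deg_\lambda(u)$ up to a shift of at most $p-1$; this pins down $k$ (and, for $m\ge3$, the exponents $c_j$ in $v\prod_j d_j^{c_j}$, which you leave unspecified---the complementary roots of $\lambda^m-\lambda^{m-1}+1-p$ are in general irrational, so ``balancing the $\mu$-degree'' is not by itself a usable recipe there). Without this triangular self-reproduction your induction on $N$ has no engine, so as written the proof is incomplete.
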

\begin{proof}
  Write $\alpha=\beta-1$, and consider the element
  $x=d_1^{\alpha\beta^{m-2}}d_2^{\alpha\beta^{m-3}}\cdots
  d_{m-1}^\alpha v$. Then $\psi(x^\beta)$ is a lower triangular
  matrix, with $x$ at position $(p,p)$. Because $x\neq0$, it follows
  that $x^{\beta^n}\neq0$ for all $n\in\N$, so $x$ is not a nil
  element.
\end{proof}
There does not seem to exist such a simple argument for arbitrary
primes; for instance, for $p=m=2$ it seems that $x=v+v^2+vuv$ has
infinite order (its order is at least $2^{10}$), while for $p=5$ and
$m=2$ it seems that $x=v+d^2-d^4$ has infinite order (its order is at
least $5^3$).

\subsection{Thinned algebras}
The ``thinned algebra'' from Definition~\ref{defn:thinned} is an
instance of a \emph{self-similar algebra}, as defined
in~\cite{bartholdi:branchalgebras}. We recall the basic notion: a
\emph{self-similar associative algebra} is an algebra $\AA$ endowed
with a homomorphism $\phi:\AA\to \Mat_d(\AA)$. If furthermore $\AA$ is
augmented (by $\varepsilon:\AA\to\Bbbk$), then $\AA$ acts linearly on
$T(\Bbbk^d)$: given $a\in\AA$ and an elementary tensor
$v=x_1\otimes\cdots\otimes x_n\in T(\Bbbk^d)$, set
$a\cdot1=\varepsilon(a)$ and recursively
\[a(v)=\sum_{i,j=1}^d\langle e_i|x_1\rangle e_j\otimes
a_{ij}(x_2\otimes\cdots\otimes x_n),\] if
$\psi(a)=(a_{ij})$. Conversely, specifying $\phi(s),\varepsilon(s)$
for generators $s$ of $\AA$ defines at most one self-similar algebra
acting faithfully on $T(\Bbbk^d)$.

The first example of self-similar algebra (though not couched in that
language) is due to Sidki~\cite{sidki:primitive}. He constructed a
primitive ring $\AA$, containing both the Gupta-Sidki torsion group
(see~\S\ref{ss:gsgp}) and a transcendental element.

Another example~\cite{bartholdi:branchalgebras}, on the other hand, is
far from primitive: it equals its Jacobson radical, contains both the
Grigorchuk group (see~\S\ref{ss:ggp}) and an invertible transcendental
element, and has quadratic growth.

The fundamental idea of ``linearising'' the definition of a
self-similar group (see~\S\ref{ss:ssg}) already appears
in~\cite{zarovnyi:linear}.

\subsection{Bimodules}
The definition of self-similar associative algebra, given above, has
the defect of imposing a specific choice of basis. The following more
abstract definition is essentially equivalent.
\begin{defn}\label{defn:ssaa}
  An associative algebra $\AA$ is \emph{self-similar} if it is
  endowed with a \emph{covering bimodule}, namely, an
  $\AA$-$\AA$-bimodule $\mathscr M$ that is free qua right
  $\AA$-module.
\end{defn}
Indeed, given $\psi:\AA\to\Mat_d(\AA)$, define $\mathscr
M=\Bbbk^d\otimes\AA$, with natural right action, and left action
\[a\cdot(e_i\otimes b)=\sum_{j=1}^d e_j\otimes a_{ij}b\text{ for
}\psi(a)=(a_{ij}).\] Conversely, if $\mathscr M$ is free, choose an
isomorphism $\mathscr M_\AA\cong X\otimes\AA$ for a vector space $X$,
and choose a basis $(e_i)$ of $X$; then write $\psi(a)=(a_{ij})$ where
$a\cdot(e_i\otimes1)=\sum e_j\otimes a_{ij}$ for all $i$.

Note that we had no reason to require $X$ to be finite-dimensional (of
dimension $d$) in Definition~\ref{defn:ssaa}, though all our examples
are of that form.

The natural action of $\AA$ may be defined without explicit reference
to a basis $X$ of $\mathscr M_\AA$: one simply lets $\AA$ act on the
left on
\[\bigoplus_{n\ge0}\mathscr M\otimes_\AA\cdots\otimes_\AA\mathscr M\otimes_\AA\Bbbk.\]

It is possible to define self-similar Lie algebras $\LL$ in a
basis-free manner, by requiring that the universal enveloping algebra
$\U(\LL)$ be endowed with a covering bimodule. We shall not follow
that approach here, because it is not (yet) justified by any
applications.

\subsection{Growth}
Under the assumptions of Propositions~\ref{prop:GKupper}
and~\ref{prop:GKlower}, we may estimate the growth of the associative
algebras $\U(\LL)$ and $\AA(\LL)$. We begin by $\U(\LL)$, for which
analytic number theory methods are useful. The following is an
adaptation of~\cite{nathanson:density}*{Theorem~1}. Note that, with a
little more care, Nathanson obtained the same bounds $\Phi_1=\Phi_2$,
for $\alpha=1/2$.
\begin{lem}\label{lem:nathanson}
  Let $f(x)=\sum_{n\ge0}a_nx^n$ be a power series with positive
  co\"efficients, and consider $\alpha\in(0,1)$.
  \begin{enumerate}
  \item There exists a homeomorphism $\Phi_1:[0,\infty]\to[0,\infty]$ such that
    \[\liminf_{n\to\infty}\frac{\log a_n}{n^\alpha}\ge L\text{ implies }\liminf_{x\to1^-}(1-x)^{\alpha/(1-\alpha)}\log f(x)\ge\Phi_1(L).\]
  \item There exists a homeomorphism $\Phi_2:[0,\infty]\to[0,\infty]$ such that
    \[\limsup_{n\to\infty}\frac{\log a_n}{n^\alpha}\le L\text{ implies }\limsup_{x\to1^-}(1-x)^{\alpha/(1-\alpha)}\log f(x)\le\Phi_2(L).\]
  \end{enumerate}
\end{lem}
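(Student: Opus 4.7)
The plan is to recover this statement by the classical saddle-point method: the growth of $a_n$ on a scale $e^{Ln^\alpha}$ translates through the generating function into the growth of $\log f(x)$ on the conjugate scale $(1-x)^{-\alpha/(1-\alpha)}$, with the proportionality constant determined by Legendre duality between the functions $t\mapsto Lt^\alpha$ and $s\mapsto(1-\alpha)(\alpha L)^{\alpha/(1-\alpha)}s^{\alpha/(1-\alpha)}$. Concretely, for $0<x<1$, write $\delta=-\log x\sim 1-x$; the function $t\mapsto Lt^\alpha-\delta t$ is maximized at $t^\ast=(\alpha L/\delta)^{1/(1-\alpha)}$ with value $(1-\alpha)\alpha^{\alpha/(1-\alpha)}L^{1/(1-\alpha)}\delta^{-\alpha/(1-\alpha)}$. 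This motivates the candidate homeomorphism
\[\Phi(L)=(1-\alpha)\alpha^{\alpha/(1-\alpha)}L^{1/(1-\alpha)},\]
which is a strictly increasing continuous bijection of $[0,\infty]$ onto itself, hence a homeomorphism. I expect to set $\Phi_1=\Phi_2=\Phi$, though the statement only asks for two homeomorphisms.

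For part (1), fix $L'<L$ and $N_0$ so large that $a_n\ge e^{L'n^\alpha}$ for all $n\ge N_0$; this is possible by the hypothesis on the liminf. For given $x$ near $1$, choose $n$ to be the integer nearest $t^\ast(x):=(\alpha L'/(1-x))^{1/(1-\alpha)}$; once $x$ is close enough to $1$, this integer exceeds $N_0$, and so
\[f(x)\;\ge\;a_n x^n\;\ge\;e^{L'n^\alpha+n\log x}.\]
A direct computation of the exponent at $n=\lfloor t^\ast\rfloor$ yields $(1-x)^{\alpha/(1-\alpha)}\log f(x)\ge\Phi(L')+o(1)$ as $x\to1^-$; letting $L'\to L$ gives the result (including $L=\infty$, since then $L'$ may be arbitrarily large).

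For part (2), fix $L'>L$ and $N_0$ so large that $a_n\le e^{L'n^\alpha}$ for $n\ge N_0$. Then
\[f(x)\;\le\;P(x)+\sum_{n\ge N_0}e^{L'n^\alpha}x^n\]
for a polynomial $P$. The crucial step is to bound the tail by an integral. The function $g(t)=L't^\alpha+t\log x$ is concave in $t>0$, unimodal, and maximized at $t^\ast$. Hence the sum is majorized by
\[\int_0^\infty e^{g(t)}\,dt\;+\;e^{g(t^\ast)},\]
and Laplace's method (or, more elementarily, splitting the integral around $t^\ast$ into a central Gaussian-type piece and rapidly decaying tails) gives
\[\int_0^\infty e^{g(t)}\,dt\;\le\;C\,|g''(t^\ast)|^{-1/2}\,e^{g(t^\ast)}.\]
Since $|g''(t^\ast)|^{-1/2}$ is polynomial in $(1-x)^{-1}$, its logarithm is $O(\log\frac1{1-x})$ and is therefore negligible on the scale $(1-x)^{-\alpha/(1-\alpha)}$. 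Taking logarithms and multiplying by $(1-x)^{\alpha/(1-\alpha)}$, the right-hand side becomes $\Phi(L')+o(1)$, and letting $L'\to L$ concludes the argument.

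The main obstacle is the upper bound (2): one must dominate a discrete sum by a continuous integral and show that neither the polynomial correction $P(x)$ nor the Laplace prefactor $|g''(t^\ast)|^{-1/2}$ spoils the leading-order asymptotic. For $L=0$ this still requires a small separate verification that $\log f(x)=o((1-x)^{-\alpha/(1-\alpha)})$, obtained by taking $L'>0$ arbitrarily small and using $\Phi(0)=0$; for $L=\infty$ the bound is vacuous. Finally, once the two bounds are established with the same explicit function $\Phi$, its monotonicity, continuity, and surjectivity onto $[0,\infty]$ are all immediate from its closed form, finishing the proof.
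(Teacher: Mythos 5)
Your proposal is correct and follows essentially the same route as the paper's proof: substitute $x=e^{-t}$, optimize the exponent $Ln^\alpha-tn$ via Legendre duality, bound $f(x)$ below by the single near-optimal term for (1), and for (2) exploit unimodality of the exponent to show that the sum is the peak value times a prefactor that is only polynomial in $(1-x)^{-1}$ and hence negligible. The only differences are cosmetic — you dominate the tail by an integral and invoke Laplace's method where the paper splits the sum into three ranges and uses a geometric series, and you carry the computation far enough to get the explicit common value $\Phi_1=\Phi_2=(1-\alpha)\alpha^{\alpha/(1-\alpha)}L^{1/(1-\alpha)}$, which the paper only remarks (crediting Nathanson for $\alpha=1/2$) is attainable ``with a little more care.''
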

\begin{proof}
  Ad~(1): for every $\epsilon>0$ there are arbitrarily large $n\in\N$
  with $a_n\ge e^{(L-\epsilon)n^\alpha}$. Consider $x=e^{-t}$, with
  $t\in\R_+$. Then $f(x)\ge a_nx^{-n}\ge
  e^{(L-\epsilon)n^\alpha-tn}$. This expression is maximized at
  $t=\alpha(L-\epsilon)n^{\alpha-1}$, with $t\to0$ as
  $n\to\infty$. Therefore, $\log
  f(x)\ge(1-\alpha)(L-\epsilon)(t/\alpha(L-\epsilon))^{\alpha/(\alpha-1)}=Kt^{\alpha/(\alpha-1)}$
  for a function $K(L,\epsilon)$. Now, for $t\to0$, we have
  $t\approx1-x$, so $(1-x)^{\alpha/(1-\alpha)}\log f(x)\ge K$ for $x$
  near $1^-$. Thus $\Phi_1(L):=\lim_{\epsilon\to0}K(L,\epsilon)$
  satisfies~(1).

  Ad~(2): for every $\epsilon>0$ there is $N_0\in\N$ such that $a_n\le
  e^{(L-\epsilon)n^\alpha}$ for all $n\ge N_0$. Consider $x$ near
  $1^-$, and write again $x=e^{-t}$. Set
  $N_1=(t/\alpha(L+\epsilon))^{1/(\alpha-1)}$. Write
  \[f(x)=\sum_{n<N_0}a_nx^n+\sum_{n=N_0}^{2N_1}a_nx^n+\sum_{n\ge
    2N_1}a_nx^n.\] The first summand is bounded by a function
  $K_1(\epsilon)$. The second is bounded by
  $2N_1e^{(L+\epsilon)N_1^\alpha-N_1t}$. For $n\ge2N_1$ we have
  $d/dn((L+\epsilon)n^\alpha-tn)\le(2^{\alpha-1}-1)\alpha(L+\epsilon)N_1^{\alpha-1}<0$,
  so the third summand is bounded by the geometric series
  $\sum_{i\ge0}e^{(L+\epsilon)(2N_1)^\alpha-t(2N_1)}e^{(2^{\alpha-1}-1)\alpha(L+\epsilon)N_1^{\alpha-1}i}$.
  Collecting all three summands into a bound $\Phi_2(L)$ yields~(2).
\end{proof}

\begin{lem}\label{lem:prodUEA}
  Consider the series $f(x)=\prod_{n\ge1}(1-x^n)^{-n^\beta}$. Then
  $(1-x)^{\beta+1}\log f(x)$ is bounded away from $\{0,\infty\}$ as
  $x\to1^-$.
\end{lem}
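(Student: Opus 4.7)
The plan is to take logarithms, expand termwise, and reduce the problem to the classical asymptotic for a power series with polynomial coefficients. Starting from the identity $-\log(1-y)=\sum_{k\ge1}y^k/k$, I would write
\[
\log f(x)=-\sum_{n\ge1}n^\beta\log(1-x^n)=\sum_{n,k\ge1}\frac{n^\beta}{k}x^{nk}.
\]
Since every term is non-negative for $0<x<1$, Tonelli justifies regrouping by $m=nk$: writing $n$ for a divisor of $m$ and $k=m/n$, one obtains
\[
\log f(x)=\sum_{m\ge1}\frac{x^m}{m}\sum_{n\mid m}n^{\beta+1}=\sum_{m\ge1}\frac{\sigma_{\beta+1}(m)}{m}x^m,
\]
where $\sigma_s(m):=\sum_{d\mid m}d^s$.

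Next I would sandwich the divisor sum. The lower bound $\sigma_{\beta+1}(m)\ge m^{\beta+1}$ comes from the single term $d=m$. For the upper bound, substitute $d=m/e$ with $e\mid m$ to get $\sigma_{\beta+1}(m)=m^{\beta+1}\sum_{e\mid m}e^{-\beta-1}\le\zeta(\beta+1)\,m^{\beta+1}$ (using $\beta>0$ so that $\zeta(\beta+1)$ converges). Hence
\[
\sum_{m\ge1}m^\beta x^m\;\le\;\log f(x)\;\le\;\zeta(\beta+1)\sum_{m\ge1}m^\beta x^m.
\]

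It remains to show $\sum_{m\ge1}m^\beta x^m\asymp(1-x)^{-\beta-1}$ as $x\to1^-$. Setting $t=-\log x$, so that $t\sim 1-x$, I would compare the sum with the integral $\int_0^\infty u^\beta e^{-ut}\,du=\Gamma(\beta+1)t^{-\beta-1}$ via Euler--Maclaurin (or via an elementary monotonicity argument splitting the sum at $u=1/t$): the difference is of lower order than $t^{-\beta-1}$. Multiplying the sandwich inequality by $(1-x)^{\beta+1}$ then gives
\[
\Gamma(\beta+1)+o(1)\;\le\;(1-x)^{\beta+1}\log f(x)\;\le\;\zeta(\beta+1)\Gamma(\beta+1)+o(1),
\]
which bounds the quantity away from $0$ and $\infty$.

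The main potential obstacle is the Abelian asymptotic for $\sum m^\beta x^m$; however this is standard, and in any case I need only two-sided constant bounds, not a sharp limit, so a crude comparison of the sum with $\int_0^\infty u^\beta e^{-ut}\,du$ suffices. The rearrangement that produces $\sigma_{\beta+1}(m)$ is the conceptual heart of the argument, because it turns the infinite product into a single Dirichlet-type generating series whose behaviour at $x=1^-$ is controlled by the trivial two-sided bound $m^{\beta+1}\le\sigma_{\beta+1}(m)\le\zeta(\beta+1)m^{\beta+1}$.
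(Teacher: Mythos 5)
Your route is genuinely different from the paper's. You convert the product into a Lambert--type series, $\log f(x)=\sum_{m\ge1}\sigma_{\beta+1}(m)m^{-1}x^m$, and control it by a pointwise sandwich on the divisor sum; the paper instead obtains the lower bound from the one-term inequality $-\log(1-x^n)\ge x^n$ and the upper bound by a telescoping comparison of $f(x)$ with $g(x)=f(x)/f(x^p)$, via $-\log\bigl((1-x^n)/(1-x^{pn})\bigr)\le px^n$. Your version has the advantage of producing explicit two-sided constants $\Gamma(\beta+1)$ and $\zeta(\beta+1)\Gamma(\beta+1)$, and of avoiding the (somewhat delicate) passage from an estimate on $g$ back to one on $f$.

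There is, however, a genuine gap in your upper bound at $\beta=0$ (and on $(-1,0)$): the inequality $\sigma_{\beta+1}(m)\le\zeta(\beta+1)m^{\beta+1}$ needs $\beta>0$, a hypothesis the lemma does not carry. This is not merely formal: for $\beta=0$ one has $\sigma_1(m)/m=\sum_{e\mid m}e^{-1}$, which is unbounded (of order $\log\log m$ along highly composite $m$), so the best pointwise sandwich yields an upper bound of order $(1-x)^{-1}\log\log\frac1{1-x}$, not $O\bigl((1-x)^{-1}\bigr)$ --- whereas the true asymptotic is $\frac{\pi^2}{6}(1-x)^{-1}$. Since the lemma is applied with $\beta=\GKdim(\LL)-1$ and $\GKdim(\LL)=1$ occurs (the Grigorchuk algebra), the case $\beta=0$ cannot be discarded. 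The repair is to regroup your double sum the other way: $\log f(x)=\sum_{k\ge1}k^{-1}\sum_{n\ge1}n^\beta x^{nk}$, bound the inner sum by $C(1-x^k)^{-\beta-1}$, and split at $k\approx(1-x)^{-1}$, using $1-x^k\ge c\,k(1-x)$ in the first range and $\sum_n n^\beta y^n=O(y)$ for $y\le e^{-1}$ in the second; this gives $O\bigl(\zeta(\beta+2)(1-x)^{-\beta-1}\bigr)$, valid for every $\beta>-1$. With that modification your argument is complete.
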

\begin{proof}
  We have $\log f(x)=-\sum_{n\ge1}n^\beta\log(1-x^n)$; and
  $-\log(1-x^n)\ge x^n$ so $\log f(x)\ge\sum_{n\ge1}n^\beta
  x^n\approx(1-x)^{-\beta-1}$; therefore $L_f:=(1-x)^{\beta+1}\log
  f(x)$ is bounded away from $0$.

  Conversely, it makes no difference to consider $f(x)$ or
  $g(x):=f(x)/f(x^p)$, because $L_g=(1-p^{-1})L_f$. Now
  $-\log((1-x^n)/(1-x^{pn}))=\sum_{i\ge1}(x^{ni}-x^{pni})/i\le px^n$,
  so $\log g(x)\precapprox p(1-x)^{-\beta-1}$ is bounded
  away from $\infty$.
\end{proof}

\noindent The following is an improvement on the bounds given
in~\cite{petrogradsky:growth2}*{Proposition~1}.
\begin{thm}
  Let $\LL$ be a contracting, $\R_+$-graded self-similar Lie algebra,
  with dilation $\lambda>1$, that is generated by finitely many
  positive-degree elements. Then the universal enveloping algebra
  $\U(\LL)$ has subexponential growth; more precisely, the growth of
  $\U(\LL)$ is bounded as
  \begin{equation}\label{eq:growthUEA}
    \dim(\U(\LL))_{\le d}\precsim e^{d^{\GKdim(\LL)/(\GKdim(\LL)+1)}}.
  \end{equation}
  If furthermore $\LL$ is regularly weakly branched, then the exponent
  in~\eqref{eq:growthUEA} is sharp.
\end{thm}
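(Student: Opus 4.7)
The plan is to read off $\log\dim\U(\LL)_{\le d}$ from the Poincar\'e series of $\U(\LL)$, via Lemmata~\ref{lem:nathanson} and~\ref{lem:prodUEA}. Write $a_n=\dim\LL_n$ for the graded dimensions of $\LL$ and $\alpha=\GKdim(\LL)$. By the (restricted) Poincar\'e--Birkhoff--Witt theorem,
\[P_{\U(\LL)}(x)=\prod_{n\ge1}\frac{(1-x^{pn})^{a_n}}{(1-x^n)^{a_n}}\quad\text{in characteristic }p,\]
and $\prod_{n\ge1}(1-x^n)^{-a_n}$ in characteristic $0$; in either case $\log P_{\U(\LL)}(x)$ is comparable to $-\sum_n a_n\log(1-x^n)$ as $x\to1^-$.

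\emph{Upper bound.} Proposition~\ref{prop:GKupper} provides the cumulative estimate $\sum_{k\le n}a_k\precapprox n^\alpha$. Splitting the sum $-\sum_n a_n\log(1-x^n)$ at the cutoff $n\sim 1/(1-x)$ and performing Abel summation against this cumulative bound (the same mechanism used to prove Lemma~\ref{lem:prodUEA}, with the model weights $k^{\alpha-1}$ replaced by the actual $a_k$), we obtain
\[\log P_{\U(\LL)}(x)\precapprox(1-x)^{-\alpha}\quad\text{as }x\to1^-.\]
Lemma~\ref{lem:nathanson}, applied in its Tauberian form to the non-negative coefficients of $P_{\U(\LL)}$ with exponent $\alpha'=\alpha/(\alpha+1)$ chosen so that $\alpha'/(1-\alpha')=\alpha$, then yields $\log\dim\U(\LL)_{\le d}\precapprox d^{\alpha/(\alpha+1)}$, which is~\eqref{eq:growthUEA}.

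\emph{Sharpness.} Suppose $\LL$ is regularly weakly branched. Proposition~\ref{prop:GKlower} forces $\alpha=\log(\dim X)/\log\lambda$; moreover its proof exhibits subspaces $X^{\otimes n}\otimes\Bbbk a\subseteq\LL$ of maximal degree $\le\lambda^n\deg(a)$, from which one extracts a matching lower bound $\sum_{k\le n}a_k\succsim n^\alpha$. The same Abel-summation argument, in its reverse direction, supplies $\log P_{\U(\LL)}(x)\succsim(1-x)^{-\alpha}$, and Lemma~\ref{lem:nathanson}(1) converts this into $\log\dim\U(\LL)_{\le d}\succsim d^{\alpha/(\alpha+1)}$, so the bound~\eqref{eq:growthUEA} is sharp.

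The main obstacle is the transfer between cumulative dimension bounds and the logarithmic size of the generating function: the $a_n$ need not be monotone, so one cannot simply plug them into Lemma~\ref{lem:prodUEA}, and the split at $n\sim 1/(1-x)$ must be handled carefully by Abel summation against the cumulative bound; in the upper-bound direction one further needs the Tauberian version of Lemma~\ref{lem:nathanson}, which is legitimate because the coefficients of $P_{\U(\LL)}$ are non-negative by PBW.
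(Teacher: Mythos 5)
Your proposal follows essentially the same route as the paper's proof: PBW gives the Poincar\'e series of $\U(\LL)$ as the infinite product $f_0(x)=\prod(1-x^n)^{-\dim\LL_n}$ (or $f_0(x)/f_0(x^p)$ in characteristic $p$), Propositions~\ref{prop:GKupper} and~\ref{prop:GKlower} give the cumulative bounds $\sum_{k\le n}\dim\LL_k\asymp n^{\GKdim\LL}$, Lemma~\ref{lem:prodUEA} (which you rederive via Abel summation, handling the non-monotonicity of $\dim\LL_n$ that the paper dismisses more briskly) controls $\log f(x)$ as $x\to1^-$, and Lemma~\ref{lem:nathanson} transfers this back to the coefficients. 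The only differences are presentational --- your explicit Abel-summation step and your remark on the Tauberian use of Lemma~\ref{lem:nathanson} make more visible two points the paper treats implicitly --- so the argument is correct and matches the paper's.
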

\begin{proof}
  We denote, for an algebra $\AA$, by $\AA_n$ the homogeneous summand
  of degree $n$.  Let $f(x)=\sum_{n\ge0}\dim\U(\LL)_nx^n$ denote the
  Poincar\'e series of $\U(\LL)$. By the Poincar\'e-Birkhoff-Witt
  Theorem, we have $f(x)=f_0(x)=\prod_{n\ge1}(1-x^n)^{-\dim\LL_n}$ in
  characteristic $0$, and $f(x)=f_0(x)/f_0(x^p)$ in characteristic
  $p$. By Proposition~\ref{prop:GKupper}, we have
  \begin{equation}\label{eq:UEA:1}
    \sum_{n=\lambda^m}^{\lambda^{m+1}-1}\dim\LL_n\precapprox\lambda^{\theta
      m},\text{ with }\theta=\GKdim(\LL).
  \end{equation}
  The estimate~\eqref{eq:growthUEA} only depends on the asymptotics of
  $f(x)$ near $1$, and these change only by a factor of $\lambda$
  between the extreme cases when the sum in~\eqref{eq:UEA:1} is
  concentrated in its first or its last terms. It therefore makes no
  harm to assume $\dim\LL_n\precapprox n^{\theta-1}$, so
  $(1-x)^\theta\log(x)$ is bounded away from $\infty$ by
  Lemma~\ref{lem:prodUEA}; and the upper bound in~\eqref{eq:growthUEA}
  follows from Lemma~\ref{lem:nathanson}(1).

  On the other hand, if $\LL$ is regularly weakly branched then by
  Proposition~\ref{prop:GKlower} we have
  $\sum_{n=\lambda^m}^{\lambda^{m+1}-1}\dim\LL_n\succapprox\lambda^{\theta
    m}$ and the lower bound in~\eqref{eq:growthUEA}
  follows from Lemma~\ref{lem:nathanson}(2).
\end{proof}

We now show that the thinned algebra $\AA(\LL)$ has finite
Gelfand-Kirillov dimension, double that of $\LL$, under the same
hypotheses:
\begin{thm}
  Let $\LL$ be a contracting, $\R_+$-graded self-similar Lie algebra,
  with dilation $\lambda>1$, that is generated by finitely many
  positive-degree elements. Then
  \[\GKdim\AA(\LL)\le2\frac{\log\dim X}{\log\lambda}.\]
  On the other hand, if $\LL$ is regularly weakly branched, then
  \[\GKdim\AA(\LL)\ge2\frac{\log\dim X}{\log\lambda}.\]
\end{thm}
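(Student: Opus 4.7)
The plan is to establish matching upper and lower bounds on $\dim\AA(\LL)_{\le d}$, both of order $d^{2\log(\dim X)/\log\lambda}$, in the pattern of Propositions~\ref{prop:GKupper} and~\ref{prop:GKlower}. The extra factor of~$2$ in the exponent reflects that $\AA(\LL)$ embeds in $\End(X^{\otimes n})$, of dimension $(\dim X)^{2n}$, whereas $\LL$ lives only in the smaller $X^{\otimes n}\otimes\Der X$. The overall strategy is to promote the Lie-theoretic arguments to the associative algebra $\AA(\LL)$, which (as a quotient of $\U(\LL)$ equipped with the matrix recursion $\psi:\AA(\LL)\to\Mat_{\dim X}(\AA(\LL))$) is itself a self-similar associative algebra in the sense of~\cite{bartholdi:branchalgebras}, with the same dilation~$\lambda$, and which inherits contracting and (when applicable) weakly branched structures from~$\LL$.

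For the upper bound, I would iterate the matrix recursion. The faithful action of $\AA(\LL)$ on $T(X)$, together with the augmentation $\varepsilon$, forces the combined map $(\psi^n,\varepsilon):\AA(\LL)\to\Mat_{(\dim X)^n}(\AA(\LL))\oplus\Bbbk$ to be injective for every $n$: if $\psi^n(u)=0$ and $\varepsilon(u)=0$, then $u$ acts trivially on every $X^{\otimes m}$, so $u=0$. The contracting hypothesis on $\LL$, via an associative adaptation of the argument behind Lemma~\ref{lem:sum}, produces a fixed finite-dimensional subspace $V\subset\AA(\LL)$ such that, for $u\in\AA(\LL)_{\le d}$ and $n=\lceil\log_\lambda d\rceil+O(1)$, all entries of $\psi^n(u)$ lie in $V$. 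Combining injectivity with this entry bound yields $\dim\AA(\LL)_{\le d}\le\dim V\cdot(\dim X)^{2n}+1\precsim d^{2\log(\dim X)/\log\lambda}$, hence $\GKdim\AA(\LL)\le 2\log(\dim X)/\log\lambda$.

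For the lower bound, I would exploit that, as already observed in the proof of Proposition~\ref{prop:notPI}, $\AA(\LL)$ is weakly branched as a self-similar associative algebra in the sense of~\cite{bartholdi:branchalgebras}*{\S3.1.6} as soon as $\LL$ is regularly weakly branched. The bound then follows from the corresponding growth lower bound for weakly branched self-similar associative algebras. Concretely, for $0\ne a\in\KK$ of degree~$\epsilon$, regularity gives $X^{\otimes n}\otimes a\subset\LL$ for every~$n$, and the plan is to combine such elements at \emph{distinct} depths so as to produce $\asymp(\dim X)^{2n}$ linearly independent products in $\AA(\LL)$, each of degree $\precsim\lambda^n\epsilon$; this establishes $\dim\AA(\LL)_{\le C\lambda^n}\succsim(\dim X)^{2n}$ and hence the desired lower bound.

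The step I anticipate to be the main obstacle is this last construction. Products $(v\otimes a)(v'\otimes a)$ with both factors at the same depth~$n$ collapse, as operators on $T(X)$, essentially to $vv'\otimes a^2$, yielding only $(\dim X)^n$ independent elements --- short by a full factor of $(\dim X)^n$. Producing the required $(\dim X)^{2n}$ independent elements therefore demands carefully interleaving branched elements at several different depths, so that their multiplicative contributions on shallow coordinates and derivative contributions on deeper coordinates decouple. Verifying this independence is where the associative branched structure really does the work, and is precisely where the factor of~$2$ genuinely enters the argument.
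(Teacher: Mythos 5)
Your upper bound is the paper's argument: push a homogeneous $u\in\AA(\LL)$ of degree $\le d$ through the iterated matrix recursion $\psi^n:\AA(\LL)\to\Mat_{X^{\otimes n}\times X^{\otimes n}}(\cdot)$, bound $n$ by $\log_\lambda(d/\epsilon)$ via~\eqref{eq:contrlb}, use faithfulness of the action on $T(X)$ (equivalently, compatibility with the embeddings~\eqref{eq:extendaction}) to see that $\psi^n$ is injective, and count $(\dim X)^{2n}$ entries lying in a fixed finite-dimensional space. One caveat on the justification of that last point: a literal ``associative adaptation of Lemma~\ref{lem:sum}'' does not work, because the entries of $\psi^n(a_1\cdots a_k)$ are sums of $k$-fold products of nucleus elements, and the multiplicative span of $N$ is infinite-dimensional. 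What does work is the grading: $\psi^n$ preserves degree up to the degrees of the row and column labels, which are uniformly $O(\lambda^n)$, so for $\deg u\le d$ and $n\approx\log_\lambda d$ every entry is homogeneous of bounded degree, and the span of bounded-degree homogeneous elements is finite-dimensional because $\LL$ is generated by finitely many positive-degree elements. (The paper's own ``express it in $\Mat_{X^{\otimes n}\times X^{\otimes n}}(N)$'' elides the same point.)

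For the lower bound you have correctly located the real content, which the paper hides behind ``following the proof of Proposition~\ref{prop:GKlower}'': the subspaces $X^{\otimes n}\otimes\Bbbk a$ and their same-depth products account for only $(\dim X)^n$ independent elements, and one needs the square of that. Your proposed fix --- interleaving branched elements at several depths --- is left as a plan, and as it stands this is the genuine gap in the proposal. The standard way to close it uses transitivity rather than depth-interleaving: by regular weak branching, $\psi^n(\KK)$ contains $X^{\otimes n}\otimes\Bbbk a$, i.e.\ all multiplication operators $m_v\otimes a$ with $v\in X^{\otimes n}$; by Lemma~\ref{lem:transitive} there are elements of $\U(\LL)$ acting on $X^{\otimes n}$ as derivations moving $\theta^{\otimes n}$ to any basis vector; and multiplications together with these derivations generate all of $\End(X^{\otimes n})$. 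Hence the two-sided ideal of $\AA(\LL)$ generated by $X^{\otimes n}\otimes a$ contains $(\dim X)^{2n}$ linearly independent elements. Each is a product of $O(n)$ factors of degree $\precapprox\lambda^n$, so has degree $\precapprox n\lambda^n$; the extra factor $n$ is absorbed in the $\limsup\log/\log$ defining $\GKdim$, and the bound $2\log(\dim X)/\log\lambda$ follows. Until an argument of this kind is supplied, the lower half of your proof is incomplete --- though you deserve credit for flagging exactly this as the obstacle, where the paper passes over it in silence.
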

\begin{proof}
  Let $\AA_d$ denote the span of homogeneous elements in $\AA(\LL)$ of
  degree $\le d$. Consider $a\in\AA_d$, and express it in
  $\Mat_{X^{\otimes n}\times X^{\otimes n}}(N)$. By~\eqref{eq:contrlb}
  we have $n\le\log_\lambda(d/\epsilon)$, so
  \[\dim\AA_d\le\sum_{j=0}^n(\dim X)^{2j}\dim N\precapprox
  d^{2\log(\dim X)/\log\lambda}.\]

  If $\LL$ is regularly weakly branched then, following the proof of
  Proposition~\ref{prop:GKlower}, we have
  \[\dim\AA_d\ge(d/\epsilon)^{2\log(\dim X)/\log\lambda}.\qedhere\]
\end{proof}

There is yet another notion of growth, which we just mention in
passing. For $v\in R(X)$, consider the \emph{orbit growth} function
\[\gamma_v(d)=\dim(\LL_dv),\]
where $\LL_d$ denotes the span of homogeneous elements in $\AA(\LL)$
of degree $\le d$; and consider also
\[\gamma(d)=\sup_{v\in R(X)}\gamma_v(d).\]
It seems that $\gamma(d)$ is closely related to $\dim\LL_d$, but that
may be an artefact of the simplicity of the examples yet considered.

\subsection{Sidki's monomial algebra}
Sidki considered in~\cite{sidki:monomial} a self-similar associative
algebra $\AA$ on two generators $s,t$, given by the self-similarity
structure
\[s\mapsto\begin{pmatrix}0&0\\1&0\end{pmatrix},\qquad
t\mapsto\begin{pmatrix}0&t\\0&s\end{pmatrix}.\] He gives a presentation
for $\AA$, all of whose relators are monomial, and shows that
monomials are nil in $\AA$ while $s+t$ is transcendental.

The self-similarity structure of $\AA$ is close both to that of
$\AA(\LL_2)$, the difference being the `$s$' at position $(1,1)$ of
$\psi(t)$; and to the thinned ring $\AA(I_4)$ of a semigroup
considered in~\cite{bartholdi-r:i4}, the difference being the
`$1$' at position $(1,2)$ in $\psi(s)$. Note also that $\AA$ and
$\AA(I_4)$ have the same Gelfand-Kirillov dimension. One is led to
wonder whether $\AA$ may be obtained as an associated graded of
$\AA(I_4)$.

\section{Self-similar groups}\label{ss:ssg}
My starting point, in studying self-similar Lie algebras, was the
corresponding notion for groups:
\begin{defn}
  Let $X$ be a set, called the \emph{alphabet}. A group $G$ is
  \emph{self-similar} if it is endowed with a homomorphism
  \[\psi:G\to G^X\rtimes\Sym X=:G\wr\Sym X,\]
  called its \emph{self-similarity structure}.
\end{defn}
The first occurrence of this definition seems to
be~\cite{scott:constructfpisg}*{Page~310}; it has also appeared in the
context of groups generated by
automata~\cite{zarovnyi:automaticmappings}.

A self-similar group naturally acts on the set $X^*$ of words over the
alphabet $X$: given $g\in G$ and $v=x_1\dots x_n$, define recursively
\[g(v)=\pi(x_1)g_{x_1}(x_2\dots x_n)\text{ where }\psi(g)=((g_x)_{x\in
  X},\pi).\] Conversely, if $\psi(g)$ is specified for the generators
of a group $G$, this defines at most one self-similar group acting
faithfully on $X^*$.

Note that we have no reason to require $X$ to be finite, though all
our examples are of that form.

Nekrashevych~\cite{nekrashevych:ssg} introduced a more abstract,
essentially equivalent notion:
\begin{defn}
  A group $G$ is \emph{self-similar} if it is endowed with a
  \emph{covering biset}, namely, a $G$-$G$-biset $M$ that is free qua
  right $G$-set.
\end{defn}
Indeed, given $\psi:G\to G\wr\Sym X$, define $M=X\times G$, with
natural right action, and left action
\[g(x,h)=(\pi(x),g_xh)\text{ for }\psi(g)=((g_x)_{x\in X},\pi).\]
Conversely, if $M$ is free, choose an isomorphism $M_G\cong X\times G$
for a set $X$, and write $\psi(g)=((g_x),\pi)$ where
$g(x,1)=(\pi(x),g_x)$ for all $x\in X$.

The natural action of $G$ may be defined without explicit reference
to a ``basis'' $X$ of $M$: one simply lets $G$ act on the left on
\[\bigsqcup_{n\ge0}M\times_G\cdots\times_GM\times_G\{*\},\]
where the fibred product of bisets in $M\times_GN=M\times
N/(mg,n)=(m,gn)$.

If $G$ is a self-similar group, with self-similarity structure
$\psi:G\to G\wr\Sym X$, and $\Bbbk$ is a ring, then its \emph{thinned
  algebra} is a self-similar associative algebra $\AA(G)$ with
alphabet $\Bbbk X$. It is defined as the quotient of the group ring
$\Bbbk G$ acting faithfully on $T(\Bbbk X)$, for the self-similarity
structure $\psi':\Bbbk G\to\Mat_X(\Bbbk G)$ given by
\[\psi'(g)=\sum_{x\in X}{\mathbb1}_{x,\pi(x)}g_x\text{ for
}\psi(g)=((g_x),\pi),\]
where $\mathbb1_{x,y}$ is the elementary matrix with a $1$ at position
$(x,y)$.

The definition is even simpler in terms of bisets and bimodules: if
$G$ has a covering biset $M$, then $\Bbbk G$ has a covering module
$\Bbbk M$, turning it into a self-similar associative algebra.

\subsection{From groups to Lie algebras}
We shall consider two methods of associating a Lie algebra to a
discrete group. The first one, quite general, is due to
Magnus~\cite{magnus:lie}. Given a group $G$, consider its \emph{lower
  central series} $(\gamma_n)_{n\ge1}$, defined by $\gamma_1=G$ and
$\gamma_n=[G,\gamma_{n-1}]$ for $n\ge2$. Form then
\[\LL^\Z(G)=\bigoplus_{n\ge1}\gamma_n/\gamma_{n+1}.\]
This is a graded abelian group; and the bracket
$[g\gamma_{n+1},h\gamma_{m+1}]=[g,h]\gamma_{n+m+1}$, extended
bilinearly, gives it the structure of a graded Lie ring.

Consider now a field $\Bbbk$ of characteristic $p$, and define the
\emph{dimension} series $(\gamma_n^p)_{n\ge1}$ of $G$ by
$\gamma_1^p=G$ and $\gamma_n^p = [G,\gamma_{n-1}^p](\gamma_{\lceil n/p
  \rceil}^p)^p$. The corresponding abelian group
\[\LL^\Bbbk(G)=\bigoplus_{n\ge1}\gamma_n^p/\gamma_{n+1}^p\otimes_\Fp\Bbbk\]
is now a Lie algebra over $\Bbbk$, which furthermore is restricted,
with $p$-mapping defined by $(x\gamma_{n+1}^p\otimes\alpha)^p=x^p\gamma_{pn+1}^p\otimes\alpha^p$.

Note the following alternative definition~\cite{quillen:ab}: the group
ring $\Bbbk G$ is filtered by powers of its augmentation ideal
$\varpi$. The corresponding graded ring $\overline{\Bbbk
  G}=\bigoplus_{n\ge0}\varpi^n/\varpi^{n+1}$ is a Hopf algebra,
because $\varpi$ is a Hopf ideal in $\Bbbk G$. Then $\LL^\Bbbk(G)$ is the
Lie algebra of primitive elements in $\overline{\Bbbk G}$, which
itself is the universal enveloping algebra of $\LL^\Bbbk(G)$.

There is a natural graded map $\LL^\Z(G)\to\LL^\Bbbk(G)$, given by
$g\gamma_{n+1}\mapsto g\gamma_{n+1}^p\otimes1$. Furthermore, its
kernel $\KK_1$ consists of elements of the form
$(g\gamma_{n+1})^p$. There is a $p$-linear map $\KK_1\to\LL^\Bbbk(G)$,
given by $(g\gamma_{n+1})^p\mapsto g^p\gamma_{pn+1}^p\otimes1$, and
similarly for higher kernels $\KK_m$ with $m\ge2$.

We turn now to a second construction, specific for self-similar
groups. We assume, further, that $G$'s alphabet is $\Fp$, and
that the image of $\psi:G\to G\wr\Sym X$ lies in $G\wr\Fp$,
where $\Fp$ acts on itself by addition. Finally, we fix a
generating set $S$ of $G$, and assume that $\psi(S)$ lies in
$S^X\times\Fp$. Let $S'$ denote a subset of $S$ such that
every $s\in S$ is of the form $(s')^n$ for unique $s'\in S'$ and
$n\in\Fp$.

Consider now the vector space $X'=\Fp[x]/(x^p)$, and the
self-similar Lie algebra $\LL(G)$ acting faithfully on $T(X')$, with
generating set $S'$, with the following self-similarity structure: set
\[\psi'(s')=\sum_{i\in\Fp}x^i(x+1)^{p-1-i}n_i\otimes
s_i+n\partial_x\text{ for }\psi(s')=((s_i^{n_i})_{i\in\Fp},n).\]

Modify furthermore the self-similarity structure as follows, to obtain
a graded algebra in which the elements of $s'$ are homogeneous: if
$\psi'(s')=\sum_{s\in S'} f_s(x)\otimes s+n\partial_x$, then let
$\overline{f_s}$ denote the leading monomial of $f_s$, and set
\[\psi(s')=\sum_{s\in S'}\overline{f_s(x)}\otimes s+n\partial_x.\]

\begin{conj}
  Consider a self-similar group $G$ as above, its thinned algebra
  $\AA(G)$ with augmentation ideal $\varpi$, and the associated graded
  algebra
  $\overline{\AA(G)}=\bigoplus_{n\ge0}\varpi^n/\varpi^{n+1}$. Then the
  Lie algebra $\LL(G)$ is a subalgebra of $\overline{\AA(G)}$.
\end{conj}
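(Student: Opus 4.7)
The plan is to define an explicit map $\iota\colon\LL(G)\to\overline{\AA(G)}$ sending each generator $s'\in S'$ to the class $[s'-1]$ in the appropriate filtered layer, extend by Lie brackets and $p$-powers, and verify that $\iota$ is injective and intertwines the self-similarity structures. To align the two sides, first identify $\Fp[\Fp]\cong\Fp[y]/(y^p-1)\cong X'=\Fp[x]/(x^p)$ via $y\mapsto1+x$; this intertwines the augmentation filtration on $\Fp[\Fp]$ with the $x$-adic filtration on $X'$, so the covering bimodule $\Fp[\Fp]\otimes\AA(G)$ of $\AA(G)$ acquires a combined filtration whose associated graded is $X'\otimes\overline{\AA(G)}$.

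Next I would show that the matrix recursion $\psi'\colon\AA(G)\to\Mat_p(\AA(G))$ is filtration-preserving when $\Mat_p(\AA(G))$ is filtered by the endomorphisms preserving the module filtration on $X'\otimes\AA(G)$. Under Pascal's change of basis $y^i\mapsto(1+x)^i$, the $(i,j)$-entry of $\psi'(a)$ with $a\in\varpi^d$ automatically lies in $\varpi^{j-i+d}$. Passing to the associated graded gives $\overline\psi\colon\overline{\AA(G)}\to\overline{\AA(G)}\wr\Der X'$, making $\overline{\AA(G)}$ itself a self-similar algebra over $X'$. The Jennings--Quillen identification of primitive elements in $\overline{\Bbbk G}$ with $\LL^\Bbbk(G)$ shows that the image of each $s'-1$ under $\overline{\Bbbk G}\twoheadrightarrow\overline{\AA(G)}$ is primitive, so the span of iterated brackets of the $[s'-1]$ forms a restricted Lie subalgebra of $\overline{\AA(G)}$ closed under $\overline\psi$.

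The central computation is a direct binomial expansion: starting from $\psi'(s')=\sum_{i\in\Fp}\mathbb1_{\pi(i),i}\,s_i^{n_i}$ in the $y$-basis with $\pi(i)=i+n$, conjugate by Pascal's matrix, substitute $s_i^{n_i}-1\equiv n_i(s_i-1)\pmod{\varpi^2}$, and extract the leading monomial in $x$ of each entry; the result is exactly
\[\overline\psi([s'-1])=\sum_{i\in\Fp}\overline{x^i(x+1)^{p-1-i}}\,n_i\otimes[s_i-1]+n\,\partial_x,\]
matching the defining formula of $\psi$ on the generator $s'\in\LL(G)$. By universality, the map $s'\mapsto[s'-1]$ extends to a self-similar Lie algebra morphism from the free self-similar Lie algebra on $S'$ to $\overline{\AA(G)}$ that factors through $\LL(G)$. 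Injectivity then follows because $\LL(G)$ is defined to act faithfully on $T(X')$, and this action factors through the natural action of $\overline{\AA(G)}$.

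The main obstacle is the compatibility of gradings: in $\LL(G)$ a generator $s'$ may carry a non-integer degree (for example $\deg t=\sqrt2$ in $\LLgs$), whereas in the $\varpi$-adic filtration every $[s'-1]$ sits uniformly in $\varpi/\varpi^2$. One therefore cannot use the naive $\varpi$-adic filtration; instead one must work with the finer Jennings dimension-series filtration (or a weighted refinement of it) tuned so that the $\varpi$-degree of $s_i-1$ matches the $\LL(G)$-degree of $s_i$, and then re-verify that the matrix recursion remains filtration-preserving. A secondary difficulty is the restricted $p$-mapping: the leading-monomial truncation used to define $\psi$ on $\LL(G)$ may discard terms that contribute nontrivially to the associative $p$-th power in $\AA(G)$, so $\iota([s']^p)=\iota(s')^p$ must be checked separately, most naturally by showing that the discarded terms lie in a deeper filtration layer and therefore vanish in the associated graded.
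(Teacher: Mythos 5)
First, a point of calibration: the paper offers no proof of this statement. It is stated as a conjecture, followed only by a heuristic sketch (the maps $\pi_k$ assembled from $\psi^k(g)$) of how one ``should'' proceed. So your proposal cannot be checked against an argument in the paper; judged on its own terms it is a sensible programme, broadly parallel to the paper's sketch, but it is not a proof, and the two obstacles you name at the end and then leave open are not peripheral --- they are the substance of the conjecture. Concretely: with the plain $\varpi$-adic filtration that appears in the statement, every $[s'-1]$ lies in $\varpi/\varpi^2$, so your ``central computation'' cannot output the leading-monomial recursion defining $\LL(G)$. For the Grigorchuk generator $b$ one has $\psi'(b)=(x+1)\otimes a+x\otimes c$ before truncation, and the discarded term $1\otimes(a-1)$ sits in exactly the same associated-graded layer as the retained term $x\otimes(a-1)$; nothing in $\bigoplus\varpi^n/\varpi^{n+1}$ makes it vanish. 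Repairing this forces the weighted filtration you allude to, whose weights are the $\LL(G)$-degrees --- irrational for $\LLgs$, where $\deg t=\sqrt2$ --- at which point you are computing the associated graded of a different filtration than the one in the conjecture, and the passage back to $\overline{\AA(G)}$ is unaddressed. The $p$-mapping compatibility you defer is a second instance of the same problem.

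Independently of the filtration, there is a logical gap in the factorization step. You invoke ``universality'' to factor $s'\mapsto[s'-1]$ through $\LL(G)$, but $\LL(G)$ has no universal property: it is the quotient of the free restricted Lie algebra $\mathscr F$ on $S'$ by the kernel of the action $\alpha$ on $T(X')$. Granting your intertwining $\gamma\circ\beta=\alpha$ (where $\beta:\mathscr F\to\overline{\AA(G)}$ is your map and $\gamma$ the induced action of $\overline{\AA(G)}$), you obtain $\ker\beta\subseteq\ker\alpha$, which is the \emph{injectivity} half; but for $\beta$ to descend to $\LL(G)$ at all you need the opposite inclusion $\ker\alpha\subseteq\ker\beta$, equivalently that the restricted Lie subalgebra of $\overline{\AA(G)}$ generated by the $[s'-1]$ acts faithfully on $T(X')$. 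That is essentially the whole content of the conjecture, and it genuinely can fail one level up: for the Grigorchuk group, the subalgebra of $\overline{\Ft G}$ generated by the $[s-1]$ is $\LL^{\Ft}(G)$, which surjects onto $\LL(G)$ with kernel the infinite-dimensional centre (Theorem~\ref{thm:isogg}). Everything therefore hinges on the passage from $\Bbbk G$ to the thinned quotient $\AA(G)$ killing precisely this kernel at the associated-graded level, and your argument never uses any property of $\AA(G)$ beyond its being a quotient of $\Bbbk G$. Until that step and the filtration issue are resolved, the proposal remains a plan, consistent with the paper's own sketch, rather than a proof.
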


\begin{conj}
  Consider a self-similar group $G$ as above, and $\LL^\Bbbk(G)$ its
  associated Lie algebra. Then $\LL(G)$ is a quotient of
  $\LL^\Bbbk(G)$.
\end{conj}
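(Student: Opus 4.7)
The plan is to combine functoriality of the Magnus--Lazard construction $G\mapsto\LL^\Bbbk(G)$ with the preceding conjecture. By Definition~\ref{defn:thinned} applied to group rings, $\AA(G)$ is the quotient of $\Bbbk G$ by the kernel of its natural action on $T(\Bbbk X)$, and this quotient carries the augmentation ideal of $\Bbbk G$ into that of $\AA(G)$; passing to associated graded algebras therefore produces a surjection $\overline{\Bbbk G}\twoheadrightarrow\overline{\AA(G)}$ of graded associative $\Bbbk$-algebras. By the Jennings--Quillen theorem, $\overline{\Bbbk G}\cong\U(\LL^\Bbbk(G))$, and $\LL^\Bbbk(G)$ sits inside as its restricted Lie algebra of primitive elements, namely the restricted Lie subalgebra generated by the degree-one classes of $s-1$ for $s$ running over any generating set of $G$. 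Since commutators and $p$-th powers respect the augmentation filtration, the surjection restricts to a restricted Lie algebra homomorphism $\phi\colon\LL^\Bbbk(G)\to\overline{\AA(G)}$.

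Granting the preceding conjecture places $\LL(G)$ inside $\overline{\AA(G)}$, and its generators $s'\in S'$ coincide (up to scalar) with the classes modulo $\varpi^2$ of $s'-1\in\AA(G)$; these are precisely the images under $\phi$ of the classes of $s'-1$ in $\LL^\Bbbk(G)_1$. Since $\LL^\Bbbk(G)$ is generated as a restricted Lie algebra by its degree-one part (because $G$ is generated by $S$), the image of $\phi$ is the restricted Lie subalgebra of $\overline{\AA(G)}$ generated by the classes of $s'-1$, and this is by definition $\LL(G)$. Corestricting $\phi$ then yields the desired surjection $\LL^\Bbbk(G)\twoheadrightarrow\LL(G)$.

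The main obstacle is that the argument leans entirely on the preceding conjecture, which is itself unproven. That conjecture splits into two technical points. First, the polynomial $x^i(x+1)^{p-1-i}$ in the definition of $\psi'$ should arise from the change of basis between the natural basis of $\Bbbk^{\Fp}$ indexed by the alphabet and a divided-power basis of $X'=\Bbbk[x]/(x^p)$ in which the translation action of $1\in\Fp$ becomes the derivation $\partial_x$; the matrix recursion $\Bbbk G\to\Mat_p(\Bbbk G)$, rewritten in that basis, should produce exactly the formula for $\psi'$. Second, the passage from $\psi'$ to its leading-monomial truncation $\psi$ corresponds to an additional associated-graded step, forced on us by the requirement that $\LL(G)$ be a graded self-similar Lie algebra rather than a merely filtered one; extracting the graded version is the technical heart of the matter, since it requires reconciling the augmentation filtration on $\AA(G)$ with the negative $X$-grading inherited from $\W(X')$. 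Without the leading-monomial truncation the surjection would be nearly immediate from the functoriality diagram; it is the truncation that is delicate.
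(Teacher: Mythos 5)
You should first be aware that the paper does not prove this statement: it is explicitly a conjecture, and the text offers only a sketch of a strategy, namely a direct construction of maps $\pi_k$ sending $g\gamma_{n+1}^p$ to an element of $X'^{\otimes k}\otimes\LL(G)$ built from the $k$-fold iterated wreath recursion, which ``should induce'' a homomorphism $\LL^\Bbbk(G)\to\LL(G)$. So there is no proof to match yours against; the only question is whether your argument closes the conjecture, and it does not.

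Your route --- factor the surjection $\Bbbk G\to\AA(G)$ through associated graded algebras, invoke Quillen's theorem to identify $\overline{\Bbbk G}$ with $\U(\LL^\Bbbk(G))$, and restrict to the restricted Lie subalgebra generated in degree one --- is a reasonable frame and genuinely different from the paper's sketch, but it has two real gaps. The first you name yourself: everything is conditional on the preceding conjecture, which is equally open, so at best you have reduced one conjecture to another. The second survives even if the first conjecture is granted: that conjecture only asserts that $\LL(G)$ is \emph{some} subalgebra of $\overline{\AA(G)}$; to conclude that the image of your map $\phi$ equals $\LL(G)$ you need the embedding to carry each generator $s'$ of $\LL(G)$ to the degree-one class of $s'-1$, and you need $\LL(G)$ to be exactly the restricted subalgebra those classes generate. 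You assert the first point ``up to scalar,'' but it is precisely the content that has to be proved, and it is where the leading-monomial truncation bites: the operator $s'-1$ on $T(\Bbbk X)$, rewritten in the basis that turns translation by $1\in\Fp$ into $\partial_x$, realizes the untruncated structure $\psi'$, whereas $\LL(G)$ is by definition the faithful algebra attached to the truncated structure $\psi$. One must show that the non-leading monomials of the $f_s$ contribute only to strictly higher steps of the augmentation filtration of $\AA(G)$, so that the degree-$n$ graded piece of $\overline{\AA(G)}$ sees only $\overline{f_s}$; without that filtration estimate the image of $\phi$ is a priori only a filtered deformation of $\LL(G)$, and the corestriction you want is unavailable. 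In short, the functoriality scaffolding is fine, but the conjecture's actual difficulty --- reconciling the augmentation filtration on $\AA(G)$ with the graded, truncated self-similarity structure defining $\LL(G)$ --- is exactly the step you defer, so the statement remains open.
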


These conjectures should be proved roughly along the following
construction: first, there is a natural map $\pi_0:G\to
G/[G,G]\to\LL(G)$, sending $s\in S$ to its image in $\LL(G)$. Then,
given $g\gamma_{n+1}^p\in\LL(G)$, let $k\in\N$ be maximal such that
$\gamma_n^p$ acts trivially on $X^k$, and consider
$\psi^n(g)=(g_{0\dots0},\dots,g_{p-1\dots p-1})$. Write then
\[\pi_k(g)=\sum_{i_1,\dots,i_k\in\Fp} x^{i_1}(x+1)^{p-1-i_1}\otimes\cdots\otimes x^{i_k}(x+1)^{p-1-i_k}\otimes\pi_0(g_{i_1\dots i_k}).\]
These maps should induce a map $\LL^\Bbbk(G)\to\LL(G)$. A similar
construction should relate $\varpi^n\le\AA(G)$ and $\LL(G)$.

Rather than pursuing this line in its generality, wee shall now see,
in specific examples, how $\LL(G)$ and $\LL^\Bbbk(G)$ are related, and
lead from self-similar groups to the Lie algebras described
in~\S\ref{ss:examples}.

\subsection{The $p$-Sylow subgroup of the infinite symmetric group}
Kaloujnine initiated in~\cite{kaloujnine:psylow} the study of the
Sylow $p$-subgroup of $\Sym(p^m)$, and its infinite
generalization~\cite{kaloujnine:bourbaki}. The Sylow subgroup of
$\Sym(p^m)$ is an $m$-fold iterated wreath product of $C_p$, and these
groups form a natural projective system; denote their inverse limit by
$W_p$. Then $W_p\cong W_p\wr\Fp$, and if we denote this
isomorphism by $\psi$ then $W_p$ is a self-similar group satisfying
the conditions of the previous paragraph.

Sushchansky and Netreba exploited Kaloujnine's representation of
elements of $W_p$ by ``tableaux'' to describe
in~\cites{sushchansky-n:psylow,netreba-s:wreath} the Lie algebra
$\LL^\Fp(W_p)$ associated with $W_p$. This language is essentially
equivalent to ours; Kaloujnine's ``tableau'' $x_1^{e_1}\dots
x_n^{e_n}$ corresponds to our derivation
$x^{p-1-e_1}\otimes\cdots\otimes x^{p-1-e_n}\otimes\partial_x$. See
also~\cite{bartholdi:lcs}*{\S3.5} for more details. The upshot is the
\begin{prop}[\cite{bartholdi:lcs}*{Theorem~3.4}]
  The Lie algebras $\LL^\Fp(W_p)$, $\LL^\Z(W_p)$, $\LL(W_p)$ and
  $\overline{\LL(W_p)}$ are isomorphic.
\end{prop}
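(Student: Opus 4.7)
The plan is to make Kaloujnine's tableau description the common currency and show that each of the four Lie algebras has the same tableaux as a basis, with the same bracket. Write $T_n$ for the free $\Fp$-module on formal tableaux $x_1^{e_1}\cdots x_n^{e_n}$ with $0\le e_i\le p-1$, and let $T=\bigoplus_{n\ge1}T_n$. Kaloujnine's calculus already provides a Lie bracket on $T$ (namely, the bracket inherited from $\W(X)$ under the translation $x_1^{e_1}\cdots x_n^{e_n}\mapsto x^{p-1-e_1}\otimes\cdots\otimes x^{p-1-e_n}\otimes\partial_x$) together with a restricted $p$-mapping, and the construction of $\LL(W_p)$ via the modified self-similarity structure directly identifies $\LL(W_p)$ with this Lie algebra on $T$. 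So the identification $\LL(W_p)\cong T$ (and, a fortiori, $\LL(W_p)\cong\overline{\LL(W_p)}$ once one checks that the ``leading monomial'' modification does not change the underlying Lie bracket, only collapses filtered equalities into graded ones) is essentially by construction.

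Next I would handle $\LL^\Z(W_p)$. Using Kaloujnine's description of $W_p$ and the Kaloujnine--Krasner embedding $W_p\hookrightarrow W_p\wr\Fp$, one computes the lower central series $\gamma_n$ explicitly: the image of a tableau of depth $n$ in $\gamma_k/\gamma_{k+1}$ is controlled by the quantity $(p-1)(\text{depth}-1)+(p-e_n)+\text{weighted sum of the $e_i$}$, matching the grading on $T$. From this one extracts a homogeneous $\Fp$-linear isomorphism $\LL^\Z(W_p)\otimes\Fp\to T$ sending the class of a tableau-group-element to the corresponding element of $T$, and one verifies compatibility of brackets by reducing to the group-commutator identity in $W_p\wr\Fp$ and comparing with the bracket formula in $\W(X)$ recalled in~\S\ref{ss:full}.

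The main obstacle is comparing $\LL^\Z(W_p)$ with $\LL^\Fp(W_p)$, i.e.\ showing that the lower central series and the dimension series of $W_p$ have the same associated graded (after tensoring with $\Fp$) and that the $p$-mapping on $\LL^\Fp(W_p)$ matches $\delta^p$ on $T$. Here one has to prove that $\gamma_n^p=\gamma_n$ for all $n$; equivalently, that the $p$-th powers $(\gamma_{\lceil n/p\rceil}^p)^p$ are already contained in $\gamma_n$. For $W_p$ this can be done using the tableau description: the $p$-th power of a group element at depth $k$ sits at depth $\ge k+1$ in the lower central series because of the wreath product shift $W_p\cong W_p\wr\Fp$ combined with the formula for raising a tuple times a cycle to the $p$-th power. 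Once this is established, the $p$-linear map $(g\gamma_{n+1})^p\mapsto g^p\gamma_{pn+1}$ becomes an honest isomorphism between the $p$-part of $\LL^\Z(W_p)\otimes\Fp$ and the $p$-mapping of $\LL^\Fp(W_p)$, and on the tableau side this is nothing but $\delta^p$.

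Finally, having produced $\Fp$-linear, bracket-preserving, and $p$-mapping-preserving bijections between all four algebras and $T$, one concludes by transitivity. I would expect the deepest calculation to be the identification of $\gamma_n$ and $\gamma_n^p$; the tableau formalism, together with $\psi:W_p\to W_p\wr\Fp$, should reduce it to a bookkeeping argument about how the filtration shifts under wreathing, and this is exactly the content of~\cite{bartholdi:lcs}*{Theorem~3.4} which can be invoked in place of a fresh proof.
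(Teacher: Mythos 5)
The first thing to say is that the paper itself offers no proof of this proposition: it is imported wholesale from \cite{bartholdi:lcs}*{Theorem~3.4}, and the only supporting material in the text is the translation dictionary between Kaloujnine's tableau $x_1^{e_1}\dots x_n^{e_n}$ and the derivation $x^{p-1-e_1}\otimes\cdots\otimes x^{p-1-e_n}\otimes\partial_x$, plus a pointer to \cite{bartholdi:lcs}*{\S3.5} and to Sushchansky--Netreba. So there is no in-text argument to compare yours against; your outline has to stand on its own. As a road map it is the right one, and it is essentially the route the cited reference takes: make the tableau Lie algebra the common model, identify $\LL(W_p)$ and $\overline{\LL(W_p)}$ with it by construction, and reduce the group-theoretic side to the coincidence of the lower central and dimension series of $W_p$.

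The genuine gap is that the two computations carrying all the weight are asserted rather than performed, and the proposal ends in a circle. First, the identification of $\gamma_n/\gamma_{n+1}$ with the span of the degree-$n$ tableaux is the entire content of the statement $\LL^\Z(W_p)\cong\LL(W_p)$: one must show both that group commutators of tableau elements have leading term given by the bracket formula of \S\ref{ss:full} (a lower bound on $\gamma_n$) and that nothing else survives (an upper bound, i.e.\ that the factors are elementary abelian of exactly the predicted rank); your ``weighted sum of the $e_i$'' names the right invariant but proves neither inclusion. Second, the claim $\gamma_n^p=\gamma_n$, which you correctly isolate as the crux of $\LL^\Z(W_p)\cong\LL^{\Fp}(W_p)$, amounts to showing that $p$-th powers of elements of $\gamma_k$ already lie in $\gamma_{pk}$ --- not merely in $\gamma_{k+1}$ --- and this needs the explicit formula for the $p$-th power of a tuple times a $p$-cycle in $W_p\wr\Fp$ together with an induction along $\psi$; it is exactly the computation that fails for the Grigorchuk group (where $\LL^\Z$ and $\LL^{\Ft}$ genuinely differ), so it cannot be waved through as bookkeeping. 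Finally, your closing sentence invokes \cite{bartholdi:lcs}*{Theorem~3.4} ``in place of a fresh proof'', but that theorem \emph{is} the proposition to be proved; as written, the proposal is a correct plan with its destination assumed.
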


\subsection{The Grigorchuk groups}\label{ss:ggp}
An essential example of self-similar group was thoroughly investigated
by
Grigorchuk~\cites{grigorchuk:burnside,grigorchuk:growth,grigorchuk:(un)solved}.
The ``first'' Grigorchuk group is defined as follows: it is
self-similar; acts faithfully on $X^*$ for $X=\Ft$; is
generated by $a,b,c,d$; and has self-similarity structure
\begin{equation}\label{eq:ggp}
  \psi:\begin{cases} G\to G\wr\Ft,\\
  a\mapsto((1,1),1),\\
  b\mapsto((a,c),0),\\
  c\mapsto((a,d),0),\\
  d\mapsto((1,b),0).\end{cases}
\end{equation}

The following notable properties of $G$ stand out:
\begin{itemize}
\item it is an infinite, finitely generated torsion
  $2$-group~\cites{aleshin:burnside,grigorchuk:burnside}, providing an
  accessible answer to a question by Burnside~\cite{burnside:question}
  about the existence of such groups;
\item It has intermediate word-growth~\cite{grigorchuk:growth}, namely
  the number of group elements expressible as a word of length $\le n$
  in the generators grows faster than any polynomial, but slower than
  any exponential function. This answered a question by
  Milnor~\cite{milnor:5603} on the existence of such groups;
\item It has finite width~\cites{rozhkov:lcs,bartholdi-g:lie}, namely
  the ranks of the lower central factors $\gamma_n/\gamma_{n+1}$ is
  bounded. This disproved a conjecture by Zelmanov~\cite{zelmanov:castelvecchio}.
\end{itemize}

Grigorchuk's construction was generalized, in~\cite{grigorchuk:pgps},
to an uncountable collection of groups $G_\omega$, for
$\omega\in\{0,1,2\}^\infty=:\Omega$. They are not anymore
self-similar, but rather are related to each other by homomorphisms
$\psi:G_\omega\to G_{\sigma\omega}\wr\Ft$, where
$\sigma:\Omega\to\Omega$ is the one-sided shift. Interpret $0,1,2$
as the three non-trivial homomorphisms $\mathbb F_4\to\mathbb
F_2$. Then each $G_\omega$ is generated by $\mathbb F_4\sqcup\{a\}$,
and
\[\psi:\begin{cases} \hfill G_\omega &\to G_{\sigma\omega}\wr\Ft,\\
  \hfill a &\mapsto((1,1),1),\\
  \mathbb F_4\ni v&\mapsto((a^{\omega(v)},v),0).\end{cases}\]
The ``first'' Grigorchuk group is then $G_\omega$ for $\omega=(012)^\infty$.

The structure of the Lie algebra $\LL^\Z(G)$, based on calculations
in~\cite{rozhkov:lcs}, and of $\LL^\Ft(G)$, are described
in~\cite{bartholdi-g:lie}, and more explicitly
in~\cite{bartholdi:lcs}*{Theorem~3.5}. Note, however, some missing
arrows in~\cite{bartholdi:lcs}*{Figure~2} between $\mathbb1^n(x^2)$
and $\mathbb1^{n+2}(x)$. Notice also that $\LL^\Ft(G)$ is neither just
infinite nor centreless: its center is spanned by $\{W(x^2)\mid
W\in\{\mathbb0,\mathbb1\}^*\setminus\{\mathbb1\}^*\}$ and has finite
codimension.

Recall the \emph{upper central series} of a Lie algebra $\LL$: it is
defined inductively by $\zeta_0=0$; by
$\zeta_{n+1}/\zeta_n=\zeta(\LL/\zeta_n)$; and by
$\zeta_\omega=\bigcup_{\alpha<\omega}\zeta_\alpha$. In particular,
$\zeta_1$ is the centre of $\LL$.
\begin{thm}\label{thm:isogg}
  The Lie algebras $\LL(G)$, $\LL^\Ft(G)/\zeta(\LL^\Ft(G))$ and $\LLtg$ are isomorphic.

  The Lie algebras $\LL^\Z(G)/\zeta_\omega(\LL^\Z(G))$ and $\LLg$ are isomorphic.
\end{thm}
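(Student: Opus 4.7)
The plan has three ingredients: (i) a tautological identification $\LL(G) = \LLtg$ via the translation recipe of \S\ref{ss:ssg}; (ii) a graded, restricted comparison map $\Phi:\LL^\Ft(G)\to\LLtg$ whose kernel is $\zeta(\LL^\Ft(G))$; and (iii) the analogous construction over $\Z$ for the second statement.

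Ingredient (i) is a short computation. With $p=2$ the polynomials $x^i(x+1)^{1-i}$ are $x+1$ and $x$, so applying the recipe $\psi \mapsto \psi'$ to the group self-similarity~\eqref{eq:ggp} yields, in characteristic two, $a\mapsto\partial_x$, $b\mapsto(x+1)\otimes a+x\otimes c=1\otimes a+x\otimes(a+c)$, $c\mapsto 1\otimes a+x\otimes(a+d)$, $d\mapsto x\otimes b$. Retaining the leading monomial reproduces~\eqref{eq:grla} exactly. Since $\LL(G)$ and $\LLtg$ are both characterised as the unique self-similar restricted Lie algebras acting faithfully on $T(X)$ with this generating data, they coincide inside $\W(X)$; dropping the restricted structure gives $\LL(G)_{\text{unrestricted}} = \LLg$ for the companion statement.

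For ingredient (ii), following the sketch between the two conjectures in \S\ref{ss:ssg}, I define $\Phi$ on a representative $g\in\gamma_n^2\setminus\gamma_{n+1}^2$ by choosing $k$ large enough that all first-level sections $g_{i_1\cdots i_k}$ of $\psi^k(g)$ lie in the image of $G\to G/[G,G]$, and setting
\[\Phi(g\gamma_{n+1}^2) = \sum_{(i_1,\ldots,i_k)\in\Ft^k} x^{i_1}\otimes\cdots\otimes x^{i_k}\otimes\overline{g_{i_1\cdots i_k}}\;\in\LLtg,\]
with $\overline{h}$ the image of $h$ in $G/[G,G]\hookrightarrow\LLtg$. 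Independence of $k$ follows from~\eqref{eq:extendaction}; compatibility with brackets and the $2$-mapping follows from naturality of the wreath-product self-similarity on both sides; and surjectivity holds since the generators $a,b,c,d$ are hit. For the kernel: since $\LLtg$ is weakly branched (shown in \S\ref{ss:grla}), it is centreless by the proposition preceding Proposition~\ref{prop:notPI}, so $\zeta(\LL^\Ft(G))\subseteq\ker\Phi$; the reverse inclusion follows from the explicit basis of $\LL^\Ft(G)$ given in \cite{bartholdi-g:lie} and \cite{bartholdi:lcs}*{Theorem~3.5}, which shows that, modulo the central basis vectors $W(x^2)$, every other basis vector maps to a nonzero element of $\LLtg$.

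For ingredient (iii), replacing $\gamma_n^2$ with the ordinary lower central series $\gamma_n$ yields $\Phi_\Z:\LL^\Z(G)\to\LLg$ by the same device, now omitting the restricted structure. The centre is no longer killed in one step: each pass to $\zeta_\alpha$ kills the squares that become visible only at depth $\alpha$, and the union over $\alpha<\omega$ gives $\zeta_\omega$, whose quotient is centreless and isomorphic to $\LLg$ by the same argument as above. The main obstacle throughout is the kernel identification: while centrelessness of the targets is automatic from weak branching, showing that $\ker\Phi$ is \emph{exactly} the centre --- and not strictly larger --- requires the explicit bases of $\LL^\Ft(G)$ and $\LL^\Z(G)$ from earlier work, together with a careful bookkeeping of how squaring in $G$ is recorded on each side of $\Phi$ (as a restricted $p$-mapping in $\LLtg$, versus as an upper-central-series accumulation in $\LL^\Z(G)$).
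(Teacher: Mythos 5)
Your ingredient (i) is essentially what the paper does for the identification $\LL(G)\cong\LLtg$ (``direct inspection shows that the self-similarity structure of $\LL(G)$ equals that of $\LLtg$''), and your computation of the leading monomials is correct. The problem is ingredient (ii). You define $\Phi$ on $\gamma_n^2/\gamma_{n+1}^2$ by sections and assert that well-definedness, additivity, and compatibility with brackets and the $2$-mapping ``follow from naturality of the wreath-product self-similarity on both sides.'' That is precisely the content of the two Conjectures stated in \S\ref{ss:ssg}, which the paper explicitly declines to prove (``Rather than pursuing this line in its generality, we shall now see, in specific examples, how $\LL(G)$ and $\LL^\Bbbk(G)$ are related''). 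Nothing about the wreath-product structure makes it automatic that $g\mapsto\sum_v v\otimes\overline{g_v}$ kills $\gamma_{n+1}^2$, is additive modulo $\gamma_{n+1}^2$, or intertwines $[g,h]\gamma_{n+m+1}^2$ with the Lie bracket on the target: the sections of a product are twisted ($(gh)_v=g_{h(v)}h_v$ up to the permutation part), and relating this to the dimension-series filtration is exactly the hard step. So as written the proposal has a genuine gap at its central point.

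The paper avoids this entirely: it imports from \cite{bartholdi:lcs}*{Theorem~3.5} the complete explicit basis $\{a,b,d,[a,d]\}\cup\{W(e),W(e^2)\}$ of $\LL^\Ft(G)$ together with its degrees and $2$-mapping, reads off the centre as the span of $\{W(e^2)\mid W\notin\{\mathbb 1\}^*\}$, and then \emph{defines} the isomorphism on basis elements by $X_1\dots X_n(e^s)\mapsto x^{1-X_1}\otimes\cdots\otimes x^{1-X_n}\otimes e^s$, checking that the quotient carries the injective self-similarity structure~\eqref{eq:grla} and hence equals $\LLtg$. This turns the homomorphism property into a finite verification against known structure constants rather than an appeal to naturality. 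Your fallback to the explicit basis for the kernel identification shows you have the right source; but you need it for the \emph{construction} of the map as well, not only for computing its kernel. The same remark applies to your ingredient (iii): the paper's precise description $\zeta_k(\LL^\Z(G))=\operatorname{span}\{X_1\dots X_n(e^2)\mid\sum X_i2^{i-1}\ge 2^n-k\}$ is what guarantees that the union over finite $k$ exhausts all the $W(e^2)$; ``each pass kills the squares that become visible at depth $\alpha$'' is not yet an argument. Note also that the centres differ between the two cases (not-all-ones words over $\Ft$, all-ones words over $\Z$) because the gradings of $W(e^2)$ differ; this is invisible in your sketch but is exactly the kind of detail the basis-level computation forces you to get right.
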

\begin{proof}
  We recall from~\cite{bartholdi:lcs}*{Theorem~3.5} the following
  explicit description of $\LL^\Ft(G)$. Write $e=[a,b]$. A basis of $\LL^\Ft(G)$ is
  \[\{a,b,d,[a,d]\}\cup\{W(e),W(e^2)\mid
  W\in\{\mathbb0,\mathbb1\}^*\},\] where $a,b,d$ have degree $1$;
  where $[a,d]$ has degree $2$; and where $\deg(X_1\dots
  X_n(e))=1+\sum_{i=1}^nX_i2^{i-1}+2^n=\frac12\deg(X_1\dots
  X_n(e^2))$. On that basis, the $2$-mapping sends $W(e)$ to $W(e^2)$
  and $\mathbb1^n(e^2)$ to $\mathbb1^{n+2}(e)+\mathbb1^{n+1}(e^2)$ and
  all other basis vectors to $0$.

  In particular, the centre of $\LL^\Ft(G)$ is spanned by
  $\{W(e^2)\mid W\not\in\{\mathbb1\}^*\}$, and
  $\LL^\Ft(G)/\zeta(\LL^\Ft(G))$ is centreless.

  Note then the following isomorphism between
  $\LL^\Ft(G)/\zeta(\LL^\Ft(G))$ and $\LLtg$. It sends $a,b,d,[d,a]$
  to $a,b,d,[d,a]$ respectively; and $X_1\dots X_n(e^s)$ to
  $x^{1-X_1}\otimes\cdots\otimes x^{1-X_n}\otimes e^s$.

  It follows that $\LL^\Ft(G)/\zeta(\LL^\Ft(G))$ admits the injective
  self-similarity structure~\eqref{eq:grla}, and therefore equals
  $\LLtg$. On the other hand, direct inspection shows that the
  self-similarity structure of $\LL(G)$ equals that of $\LLtg$.

  In the description of $\LL^\Z(G)$, the basis element $W(e^2)$ has
  degree $1+\sum_{i=1}^nX_i2^{i-1}+2^{n+1}$. The centre of $\LL^\Z(G)$
  is spanned by the $\{\mathbb1^n(e^2)\}$, and more generally
  $\zeta_k(\LL^\Z(G))$ is spanned by the $\{X_1\dots X_n(e^2)\mid
  \sum_{i=1}^nX_i2^{i-1}\ge 2^n-k$. It then follows that
  $\LL^\Z(G)/\zeta_\omega(\LL^\Z(G))$ has basis
  $\{a,b,d,[a,d]\}\cup\{W(e)\}$, and admits the injective
  self-similarity structure~\eqref{eq:grla}, so it equals $\LLg$.
\end{proof}

\begin{thm}\label{thm:isothinnedg}
  The thinned algebras associated with $G$ and $\LL(G)$ are
  isomorphic: $\AA(G)\cong\AA(\LL(G))$.
\end{thm}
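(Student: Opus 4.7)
By Theorem~\ref{thm:isogg} we have $\LL(G)\cong\LLtg$, so we need only establish $\AA(G)\cong\AA(\LLtg)$. Both algebras sit as faithful subalgebras of $\End_\Bbbk T(X)$ for a two-dimensional $X$ over $\Bbbk=\Ft$, and both are determined by a matrix self-similarity structure. The plan is to construct a linear isomorphism of the ambient tensor algebras that intertwines the two actions, using in a crucial way the ``exponential'' trick available in characteristic $2$.

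The first observation is that each Lie generator $g\in\{a,b,c,d\}$ of $\LLtg$ satisfies $g^2=0$ in $\AA(\LLtg)$: for $a=\partial_x$ this is $\partial_x^2=0$, and for $b,c,d$ the matrices in~\eqref{eq:grla} are strictly lower triangular, so $\psi(g^2)=\psi(g)^2=0$, while the action on $\Bbbk=X^{\otimes 0}$ is zero. Consequently each $\tilde g:=1+g$ is an involution, mirroring $g^2=1$ in $G$. The relation $\tilde b\tilde c\tilde d=1$ then follows from the linear identity $b+c+d=0$ (from $b+e+f=0$ in~\S\ref{ss:grla}) together with $bc=bd=cd=0$, which are immediate from matrix multiplication since $b,c,d$ all have matrices supported at position $(2,1)$. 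Hence the elements $\tilde a,\tilde b,\tilde c,\tilde d$ satisfy the finite Grigorchuk relations in $\AA(\LLtg)$. I would then choose a linear isomorphism $Q\colon\Bbbk^2\to X$ by $Q(e_0)=1+x$, $Q(e_1)=x$, and set $\Phi:=\bigoplus_{n\ge0}Q^{\otimes n}$. A direct level-one check gives $\tilde aQ=Qa_{\mathrm{grp}}$, and since $\psi_G(a)=((1,1),1)$ this extends inductively to $\Phi a_{\mathrm{grp}}=\tilde a\Phi$ on all of $T(\Bbbk^2)$.

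The analogous identity for $b,c,d$ fails on the nose: a direct calculation yields $\Phi b_{\mathrm{grp}}\Phi^{-1}=\tilde b+D_b$, where $D_b$ is the ``diagonal'' operator with matrix $\mathrm{diag}(a_{\mathrm{Lie}},c_{\mathrm{Lie}})$, and analogous corrections $D_c,D_d$ appear for $c,d$. The main obstacle is to verify that these diagonal corrections lie in $\AA(\LLtg)$. Here the identity $\psi(a_{\mathrm{Lie}}b_{\mathrm{Lie}}+b_{\mathrm{Lie}}a_{\mathrm{Lie}})=(a+c)I$ at the first recursion level, together with its analogs involving $c_{\mathrm{Lie}},d_{\mathrm{Lie}}$, yields a rich supply of diagonal matrices in $\psi(\AA(\LLtg))$; combining these with the regularly branched structure of $\LLtg$ from~\S\ref{ss:grla} (which gives independence of entries at successive recursion levels) should recover the required $D_g$. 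Granted this, $\Phi\AA(G)\Phi^{-1}\subseteq\AA(\LLtg)$; the reverse inclusion follows because $g_{\mathrm{Lie}}=\tilde g-1$ lies in $\Phi\AA(G)\Phi^{-1}$ and these generate $\AA(\LLtg)$. Faithfulness of both natural actions on $T(X)$ then yields injectivity, completing the proof.
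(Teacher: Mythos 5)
Your route is genuinely different from the paper's, and it has a real gap at its central step. The paper does not construct an intertwiner of the two natural representations: it quotes from the reference on branch algebras an explicit (infinite, monomial) presentation of $\AA(G)$ together with the fact that $\AA(G)$ is just infinite, verifies that the finitely many seed relators and all their images under the substitution $\sigma$ hold in $\AA(\LL(G))$ (using $\psi(\sigma(w))=\bigl(\begin{smallmatrix}0&0\\ w&0\end{smallmatrix}\bigr)$), and concludes that the resulting surjection $\AA(G)\to\AA(\LL(G))$ is injective because its image is infinite-dimensional. Your check that $\tilde a,\tilde b,\tilde c,\tilde d$ satisfy $\tilde g^2=1$ and $\tilde b\tilde c\tilde d=1$ is correct, but these are only the relations of $C_2*(C_2\times C_2)$; they produce a map out of $\Bbbk[C_2*(C_2\times C_2)]$, not out of $\AA(G)$, which is a much smaller quotient. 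So without either the full presentation or a genuine intertwiner you have no homomorphism defined on $\AA(G)$ at all.

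The intertwiner strategy founders on the correction terms. Setting $D_g=\Phi g_{\mathrm{grp}}\Phi^{-1}-\tilde g$, the actual recursion (in characteristic $2$, with $A'=\Phi a_{\mathrm{grp}}\Phi^{-1}=\tilde a$ exactly) is
\[D_b=\bigl(\begin{smallmatrix}a&0\\ D_c&c+D_c\end{smallmatrix}\bigr),\qquad D_c=\bigl(\begin{smallmatrix}a&0\\ D_d&d+D_d\end{smallmatrix}\bigr),\qquad D_d=\bigl(\begin{smallmatrix}0&0\\ D_b&b+D_b\end{smallmatrix}\bigr),\]
so $D_b$ is \emph{not} $\mathrm{diag}(a_{\mathrm{Lie}},c_{\mathrm{Lie}})$: that formula presupposes $\Phi c_{\mathrm{grp}}\Phi^{-1}=\tilde c$, i.e.\ $D_c=0$, whereas the corrections are coupled and supported at every level of the tree. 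Consequently the single identity $\psi(ab+ba)=(a+c)\cdot\mathrm{Id}$ cannot suffice; you must solve the self-similar system above inside $\AA(\LLtg)$, i.e.\ exhibit finite expressions in the Lie generators whose matrix recursions reproduce the $D_g$, and for your reverse inclusion you additionally need $D_g\in\Phi\AA(G)\Phi^{-1}$, since $g_{\mathrm{Lie}}=\Phi g_{\mathrm{grp}}\Phi^{-1}+D_g+1$. Neither membership is established, and together they are essentially the whole content of the theorem. (The closing appeal to faithfulness is also idle: if the two subalgebras of $\End_\Bbbk T(X)$ were shown to coincide, no further injectivity argument would be needed.) Either carry out this recursion explicitly, or fall back on the presentation-plus-just-infiniteness argument.
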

\begin{proof}
  The algebra $\AA(G)$ is just
  infinite~\cite{bartholdi:branchalgebras}*{Theorem~4.3}, and has an
  explicit presentation
  \[\AA(G)=\langle A,B,C,D\mid\mathcal
  R_0,\sigma^n(CACACAC),\sigma^n(DACACAD)\text{ for all }n\ge0\rangle,\]
  where $\sigma:\{A,B,C,D\}^*\to\{A,B,C,D\}^*$ is the substitution
  \[A\mapsto ACA,\quad B\mapsto D,\quad C\mapsto B,\quad D\mapsto C\]
  and
  \[\mathcal R_0=\{A^2,B^2,C^2,D^2,B+C+D,BC,CB,BD,DB,CD,DC,DAD\}.\]
  It is easy to check that $\psi(\sigma(w))=(\begin{smallmatrix} 0&0\\
    w&0\end{smallmatrix})$ for all words $w\in\{A,B,C,D\}$ of length
  at least $2$ and starting and ending in $\{B,C,D\}$; and that the
  relations $\mathcal R_0\cup\{CACACAC,DACACAD\}$ are satisfied in
  $\AA(\LL(G))$. There exists therefore a homomorphism
  $\AA(G)\to\AA(\LL(G))$, which must be an isomorphism because its
  image has infinite dimension.
\end{proof}

\subsection{The Gupta-Sidki group}\label{ss:gsgp}
Another important example of self-similar group was studied by Gupta
and Sidki~\cite{gupta-s:burnside}. This group is defined as follows:
it is self-similar; acts faithfully on $X^*$ for $X=\mathbb F_3$; is
generated by $a,t$; and has self-similarity structure
\begin{equation}\label{eq:gsgp}
  \psi:\begin{cases} G\to G\wr\mathbb F_3,\\
    a\mapsto((1,1,1),1),\\
    t\mapsto((a,a^{-1},t),0).\end{cases}
\end{equation}
Gupta and Sidki prove that $G$ is an infinite, finitely generated
torsion $3$-group.

The Lie algebra $\LL^\Z(G)=\LL^{\mathbb F_3}(G)$ is described
in~\cite{bartholdi:lcs}*{Theorem~3.8}, where is it shown that
$\LL^\Z(G)$ has unbounded width.

\begin{thm}\label{thm:isogs}
  The Lie algebras $\LL(G)$, $\LL^{\mathbb F_3}(G)$, $\LL^\Z(G)$ and $\LLgs$ are
  isomorphic.
\end{thm}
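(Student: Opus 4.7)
The plan mirrors the proof of Theorem~\ref{thm:isogg}, but is substantially cleaner because $\LLgs$ is regularly branched on $[\LLgs,\LLgs]$ (hence weakly branched, hence centreless), so no quotient by an upper central series is required.

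The heart of the argument is the direct verification $\LL(G)\cong\LLgs$. From \eqref{eq:gsgp}, $\psi(a)=((1,1,1),1)$ and $\psi(t)=((a,a^{-1},t),0)$. Substituting these into the formula from \S\ref{ss:ssg} that builds $\LL(G)$'s self-similarity from $G$'s, with $p=3$, I obtain
\[\psi'(a)=\partial_x,\qquad\psi'(t)=(x+1)^2\otimes a-x(x+1)\otimes a+x^2\otimes t=(1+x)\otimes a+x^2\otimes t.\]
Replacing $(1+x)$ by its leading monomial $x$---the term of highest power in $x$, which is what makes $\psi(t)$ homogeneous for the grading---produces exactly \eqref{eq:gsla}. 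Because $\LL(G)$ and $\LLgs$ are by construction the faithful quotients of the free Lie algebra on $\{a,t\}$ modulo the same self-similarity data on generators, they coincide.

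For $\LL^{\mathbb F_3}(G)\cong\LL^\Z(G)\cong\LLgs$, I would invoke the explicit combinatorial description of $\LL^\Z(G)=\LL^{\mathbb F_3}(G)$ from \cite{bartholdi:lcs}*{Theorem~3.8}; the identification of these two algebras is part of that statement, and holds because $G$ is a torsion $3$-group, whose lower central and dimension filtrations coincide modulo $\mathbb F_3$-coefficients. Using this basis-level description one constructs a map $\LL^\Z(G)\to\LLgs$ sending the generator cosets $a\gamma_2,t\gamma_2$ to $a,t$ respectively, and verifies---basis element by basis element---that it intertwines the self-similarity structures, which reduces to checking that commutator identities in $G$ descend to matching bracket identities in $\LLgs$.

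The main obstacle is showing this map is bijective. Surjectivity is automatic since generators go to generators; injectivity is the delicate point, and is precisely where the Gupta-Sidki case is cleaner than Grigorchuk's. Because $\LLgs$ acts faithfully on $T(X)$, a nonzero element of the putative kernel would act trivially through $G$'s action on the rooted ternary tree; but the branched structure ensures any nonzero element of $\LL^\Z(G)$ is detected at some finite level. Alternatively, comparing graded dimensions degree by degree (using \cite{bartholdi:lcs}*{Theorem~3.8} on one side, and the recursion arising from the self-similarity \eqref{eq:gsla} on the other) furnishes a purely combinatorial verification.
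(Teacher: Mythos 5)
Your proposal follows essentially the same route as the paper: both rest on the explicit basis-level description of $\LL^{\mathbb F_3}(G)$ from \cite{bartholdi:lcs}*{Theorem~3.8}, transport it to $\LLgs$ via the map $X_1\dots X_n(b)\mapsto x^{2-X_1}\otimes\cdots\otimes x^{2-X_n}\otimes b$ for $b\in\{[a,t],[a,[a,t]]\}$ (which you describe more loosely, arguing bijectivity by faithfulness and graded dimensions rather than writing the map down, but to the same effect), and check by direct substitution that $\LL(G)$ carries the self-similarity structure~\eqref{eq:gsla} --- your computation $(x+1)^2-x(x+1)=x+1$ and the passage to leading monomials is exactly the ``direct inspection'' the paper leaves implicit. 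One caution: your stated reason for $\LL^\Z(G)\cong\LL^{\mathbb F_3}(G)$, namely that torsion forces the lower central and dimension filtrations to agree, is not a valid general principle (the Grigorchuk group of~\S\ref{ss:ggp} is a torsion $2$-group for which $\LL^\Z$ and $\LL^{\mathbb F_2}$ genuinely differ); here the identification is instead read off the explicit description, since the only nontrivial values of the $3$-mapping, $\mathbb2^n(c)\mapsto\mathbb2^n\mathbb0^2(c)+\mathbb2^n\mathbb1(u)$, already lie in the span of honest commutators.
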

\begin{proof}
  We recall from~\cite{bartholdi:lcs}*{Theorem~3.8} the following
  explicit description of $\LL^\Fp(G)$. Write $c=[a,t]$ and
  $u=[a,c]$. Define the integers $\alpha_n$ by $\alpha_1=1$,
  $\alpha_2=2$, and $\alpha_n=2\alpha_{n-1}+\alpha_{n-2}$ for $n\ge3$.
  A basis of $\LL^\Fp(G)$ is
  \[\{a,t\}\cup\{W(c),W(u)\mid
  W\in\{\mathbb0,\mathbb1,\mathbb2\}^*\},\] where $a,t$ have degree
  $1$, and where $\deg(X_1\dots
  X_n(c))=1+\sum_{i=1}^nX_i\alpha_i+\alpha_{n+1}=\deg(X_1\dots
  X_n(u))-\alpha_{n+1}$. On that basis, the $3$-mapping sends
  $\mathbb2^n(c)$ to $\mathbb2^n\mathbb0^2(c)+\mathbb2^n\mathbb1(u)$
  and all other basis vectors to $0$.

  In particular, $\LL^\Z(G)$ and $\LL^{\mathbb F_3}(G)$ are isomorphic.

  Note then the following isomorphism between $\LL^Z(G)$ and
  $\LLtg$. It sends $a,t$ to $a,t$ respectively; and $X_1\dots X_n(b)$
  to $x^{2-X_1}\otimes\cdots\otimes x^{2-X_n}\otimes b$ for
  $b\in\{c,u\}$.

  It follows that $\LL^\Z(G)$ admits the injective self-similarity
  structure~\eqref{eq:gsla}, and therefore equals $\LLgs$. On the
  other hand, direct inspection shows that the self-similarity
  structure of $\LL(G)$ equals that of $\LLgs$.
\end{proof}

It is tempting to conjecture, in view of
Theorem~\ref{thm:isothinnedg}, that the associated graded
$\bigoplus_{n\ge0}\varpi^n/\varpi^{n+1}$ of $\AA(G)$ is isomorphic to
$\AA(\LL)$; presumably, this could be proven by finding a presentation
of $\AA(G)$.

\subsection{From Lie algebras to groups}
We end with some purely speculative remarks. Although we gave a
construction of a Lie algebra starting from a self-similar group, this
construction depends on several choices, in particular of a generating
set for the group. Could it be that the resulting algebra $\LL(G)$, or
$\overline{\LL(G)}$, is independent of such choices? Is
$\overline{\LL(G)}$ always isomorphic to $\LL^p(G)$?

On the other hand, I do not know of any ``interesting'' group to
associate with a self-similar Lie algebra --- that would, for example,
be a torsion group if the Lie algebra is nil, or have subexponential
growth if the Lie algebra has subexponential growth.

A naive attempt is the following. Consider the ``Fibonacci'' Lie
algebra $\LL_{2,\Ft}$ from~\S\ref{ss:pszla}. Its corresponding
self-similar group should have alphabet $X=\Ft$, and
generators $a,t$ with self-similarity structure
\[\psi:\begin{cases} G\to G\wr\Ft\\
  a\mapsto((1,1),1),\\
  t\mapsto((a,at),0), \end{cases}\] at least up to commutators. That
group can easily be shown to be contracting, and also regularly weakly
branched on the subgroup $\langle[a,t,t]\rangle^G$. It has Hausdorff
dimension $1/3$, and probably exponential growth.

More generally, are there subgroups of the units in $\AA(\LL)$ that
are worth investigation, for $\LL$ one of the Lie algebras
from~\S\ref{ss:examples}?

\begin{bibsection}
\begin{biblist}
\bibselect{math}
\end{biblist}
\end{bibsection}
\end{document}